\documentclass[12pt,eqno]{article}
\usepackage{amssymb}
\usepackage[leqno]{amsmath}
\usepackage[pdftex]{graphicx}
\usepackage{epsfig}
\usepackage{epstopdf}
\usepackage{algorithm}
\usepackage{algorithmic}
\usepackage{array}
\usepackage{caption,subcaption}
\usepackage{color}
\usepackage{float}
\usepackage{multirow}
\usepackage{footmisc}
\usepackage{picinpar}
\usepackage{longtable,pdflscape}

\RequirePackage{xcolor}[2.11]
\colorlet{siaminlinkcolor}{red!50!black}
\colorlet{siamexlinkcolor}{blue!50!black}
\colorlet{siamreviewcolor}{black!50}

\usepackage[colorlinks,
            linkcolor=cyan,
            anchorcolor=green,
            citecolor=siaminlinkcolor
            ]{hyperref}
\usepackage{fancyhdr}
\usepackage{rotating}
\usepackage[noabbrev,capitalise,nameinlink]{cleveref}

\makeatletter
\@addtoreset{footnote}{section}
\usepackage{indentfirst}
\setlength{\parindent}{0em}
\linespread{1.25}
\parskip0mm
\textwidth 16.25cm \textheight 22cm 
\addtolength{\evensidemargin}{-1.25cm}
\addtolength{\oddsidemargin}{-1.25cm}
\topmargin -1cm 

\graphicspath{ {./Figures/} }

\begin{document}

\def\F{{\mathcal F}}
\def\K{{\mathcal T}}

\def\P{{\mathbb P}}
\def\N{{\mathbb N}}
\def\R{{\mathbb R}}
\def\T{{\mathbb T}}

\def\bfp{{\bf p}}
\def\bfq{{\bf q}}
\def\bfr{{\bf r}}
\def\bfg{{\bf g}}
\def\bfa{{\bf a}}
\def\bfb{{\bf b}}
\def\bfc{{\bf c}}
\def\bfd{{\bf d}}
\def\bfv{{\bf v}}
\def\bfx{{\bf x}}
\def\bfy{{\bf y}}
\def\bfz{{\bf z}}
\def\bfw{{\bf w}}
\def\bfu{{\bf u}}
\def\bfl{{\bf {\boldsymbol \lambda}}}
\def\bfe{{\bf 1}}
\def\supp{{\rm supp}}
\def\nhs{{\texttt{NHS}}}
\def\nhsa{{\texttt{NHST}}}

\newcommand{\qed}{\hphantom{.}\hfill $\Box$\medbreak}
\newcommand{\proof}{\noindent{\bf Proof \ }}
\newcommand{\sign}{{\mathrm{sgn}}}
\newcommand{\ts}{$\tau$-stationary point}
\newcommand{\tzk}{$\mathbb{T}_{\tau}(\bfz^k;s)$}
\newcommand{\beq}{\begin{eqnarray}}
\newcommand{\eeq}{\end{eqnarray}}
\newcommand{\erhao}{\fontsize{18pt}{\baselineskip}\selectfont}

\numberwithin{equation}{section}
\newtheorem{theorem}{Theorem}[section]
\newtheorem{lemma}{Lemma}[section]
\newtheorem{proposition}{Proposition}[section]
\newtheorem{corollary}{Corollary}[section]
\newtheorem{definition}{Definition}[section]
\newtheorem{remark}{Remark}[section]
\newtheorem{example}{Example}[section]
\newtheorem{assumption}{Assumption}[section]
\newenvironment{cproof}
{\begin{proof}
 [Proof.]
 \vspace{-3.2\parsep}}
{\renewcommand{\qed}{\hfill $\Diamond$} \end{proof}}

\renewcommand{\theequation}{
\arabic{equation}}
\renewcommand{\thefootnote}{\fnsymbol{footnote}}


\thispagestyle{empty}

\begin{center}
\topskip10mm
\erhao{\bf Heaviside Set Constrained Optimization:\\ Optimality and Newton Method}
\end{center}
\begin{center}
\setcounter{footnote}{0}
Shenglong Zhou, shenglong.zhou@soton.ac.uk\\
School of Mathematics, University of Southampton, UK\\
Lili Pan, panlili1979@163.com\\
Department of Mathematics, Shandong University of Technology,  China\\
Naihua Xiu, nhxiu@bjtu.edu.cn\\
 Department of Applied Mathematics, Beijing Jiaotong University,  China\\
{\small
}
\end{center}

\vskip12pt

\begin{abstract} Data in the real world  frequently involve  binary status: truth or falsehood, positiveness or negativeness, similarity or dissimilarity, spam or non-spam, and to name a few, with applications into the regression, classification problems and so on. To characterize the binary status, one of the ideal functions is the Heaviside step function that returns one for one status and zero for the other. Hence, it is of dis-continuity. Because of this, the conventional approaches to deal with the binary status tremendously benefit from its continuous surrogates. In this paper, we target the Heaviside step function directly and study the Heaviside set constrained optimization: calculating the tangent and normal cones of the feasible set, establishing several first-order sufficient and necessary optimality conditions, as well as developing a Newton type method that enjoys locally quadratic convergence and excellent numerical performance. 

\vspace{3mm}
\noindent{\bf \textbf{Keywords}:}
Heaviside set constrained optimization, tangent and normal cones, optimality condition,  Newton method, 	locally quadratic convergence
 
\vspace{3mm}
\noindent{\bf \textbf{Mathematical Subject Classification}:} 49M05 $\cdot$ 90C26 $\cdot$ 90C30 $\cdot$ 65K05
\end{abstract}
{}
\numberwithin{equation}{section}

\section{Introduction}

In this paper, we study the  Heaviside set constrained optimization (HSCO):
\begin{equation}\label{ell-0-1}
\underset{\bfx\in\R^n}{\min}~ f(\bfx),~~{\rm s.t.}~ \|(A \bfx -\bfb)_+\|_0  \leq  s, 
\end{equation} 
where $f:\R^n\rightarrow\R$ is continuously differentiable, $A\in\R^{m\times n}$ with $m\leq n$,  $\bfb\in\R^{m}$ and $s < m $ is a given positive integer. Here ${\bfz}_+:=((z_1)_+, \cdots, (z_m)_+)^\top $ with $z_+:=\max\{z,{0}\}$. $\|\bfz\|_0$ is the so-called $\ell_0$ norm of $\bfz$, counting the number of its non-zero entries.  Therefore, $\|\bfz_+\|_0$ returns the number of positive entries in $\bfz$, and can be expressed as
\beq\label{heaviside}
\|\bfz_+\|_0= \frac{1}{2}\sum_{i=1}^m (\sign(z_i)+1),
\eeq
where $\sign(t)=1$ if $t$ is  positive and $-1$ otherwise. Here, we set $\sign(0)=-1$ instead of $0$ since it can be chosen as any scalar between 0 and 1  \cite{plan2012robust}. The function $h(t):= (\sign(t)+1)/2$ is known as the Heaviside step function (or the unit step function) from \cite{weisstein2002heaviside} named after Oliver Heaviside (1850-1925), an English mathematician and physicist. Therefore, we call the following set the Heaviside set: 
\begin{equation}
\label{l01-s}S:=\{\bfz\in\R^m:~\|\bfz_+\|_0\leq s\}.
\end{equation}
It is worth mentioning that the authors in  \cite{osuna1998reducing, evgeniou2000regularization, sollich2002bayesian} phrased $h(\cdot)$ the Heaviside step function, while the authors in \cite{friedman1997bias, LL2007, hastie2009elements, brooks2011support, ghanbari2019novel} called it the $0/1$ loss function. Motivations of \eqref{ell-0-1} include the support vector machine in marching learning, the 1-bit compressed sensing in signal processing, and to name a few. 
\subsection{Background}\label{application}

{\bf Example 1.1: Support vector machine (SVM).} It was first introduced by \cite{CV95} and then extensively applied into machine learning, pattern recognition, and to name a few. The task is to find a hyperplane in the input space that best separates the training data. More precisely, for the binary classification problem, suppose we are given a training data $\{(\bfa_i,c_i):i\in\N_m\}$ with $\bfa_i\in \R^{n}$ and $a_{in}=1$ being the samples and $c_i\in \{-1,1\}$ being the labels, where $\N_m:=\{1,2,\cdots,m\}$.  SVM aims at seeking a hyperplane $\langle\bfa,\bfx\rangle:=a_1x_1+\cdots+a_{n}x_{n}=0$ based on the training data, where $\bfx\in \R^{n}$ is the classifier to be trained.  
If the training data can be linearly separated in the input space, the unique optimal hyperplane can be obtained by solving the following convex quadratic programming problem:
\begin{eqnarray}\label{HM-SVM}  
\underset{\bfx\in \R^{n}}{\min}~  \|D\bfx\|^2,~~
  \mbox{s.t.}~c_{i} \langle \bfa_i, \bfx\rangle \geq1, \ i\in\N_m,
\end{eqnarray}
where $D$ is a diagonal matrix with $D_{ii}=1,i\in\N_{n-1}$, $D_{nn}\geq0$,  and $\|\cdot\|:=\|\cdot\|_2$ is the Euclidean norm. This  model is called hard-margin SVM because it requires all samples  classified correctly. However,  the training data are frequently  linearly inseparable, which means the above constraints cannot be fully satisfied. This scenario leads to the  soft-margin SVM model:
\begin{eqnarray}\label{SM-SVM}
\min_{\bfx\in \R^{n}}~~ \|D\bfx\|^2+ \mu\sum_{i=1}^{m}\ell(1-c_{i}\langle \bfa_i, \bfx\rangle),
\end{eqnarray}
where $\mu>0$ is a penalty parameter and $\ell$ can be some loss functions, such as the Hinge loss function $\ell(t)=t_+$ first introduced by \cite{CV95}. An impressive body of work has designed the loss functions: popular candidates include convex ones like the squared Hinge loss 
 in \cite{SV1999} and the pinball loss in \cite{HSS2014} and some non-convex ones in \cite{MBB2000,  CFWB2006,LL2007}. However, authors in \cite{ CV95,  LL2007, brooks2011support, nguyen2013algorithms, feng2016robust, wang2019support} pointed out that the ideal loss function is the $0/1$ loss function which turns out to be the Heaviside step function  \cite{osuna1998reducing, evgeniou2000regularization, sollich2002bayesian}, because it is completely robust to outliers and enables to attain the best learning-theoretic guarantee on predictive accuracy \cite{brooks2011support,nguyen2013algorithms,ustun2016supersparse}.  
 Since  the soft-margin SVM models allow  some samples to be misclassified,   if at most $s$ misclassified samples are allowable, we then could consider the following Heaviside set constrained model as a counterpart of the soft-margin SVM:
\begin{eqnarray}\label{HM-SVM-l01} 
\underset{\bfx\in \R^{n}}{\min}~ \|D\bfx\|^2,~~
\mbox{s.t.} ~\|( A\bfx +{\bf1})_+\|_0\leq s, 
\end{eqnarray}
where $A:=-[c_1\bfa_1~c_2\bfa_2~\cdots~c_m\bfa_m]^\top\in\R^{m\times n}$ and ${{\bf1}}:=[1~1~\cdots~1]^\top\in\R^m$.  We would like to emphasize that, compared with conventional soft-margin SVM  \eqref{SM-SVM}, the Heaviside set constrained model enjoys al least two advantages:
\begin{itemize}
\item[i)] It well captures the binary status of the problem: correctly and incorrectly classified samples. The Heaviside step function treats all  incorrectly classified samples equally, that is, counting 1 if a sample is misclassified (i.e., $t_i:=(A\bfx +{\bf1})_i>0$). However, the soft margin may treat misclassified samples unevenly. For example, the Hinge loss returns $t_i$  if  $t_i>0$. This means if one sample has an unexpectedly large $t_i$ (such a sample is called an outlier), then the model will be affected tremendously. Therefore,  the Heaviside set constrained model is more robust to the outliers than most  soft-margin SVM models.
\item[ii)] It is well known that tuning a proper penalty parameter $\mu>0$ in the soft-margin SVM   \eqref{SM-SVM} is always a tedious and tough task since the range of $\mu$ is usually unknown. A commonly used approach to select the parameter from a group of choices is the $K$-folder cross validation. But it may incur very expensive computational cost if $K$ is large or the choices are too many.  By contrast, $s$ in \eqref{HM-SVM-l01} is an integer from $\N_m$, so the range of $s$ is known. In some scenarios, the integer $s$ is able to be settled beforehand. For example, in regression or classification problems, $\|( A\bfx +{\bf1})_+\|_0/m$ calculates the ratio of the misclassified samples over the total samples, which is often required to be less than an acceptable tolerance, e.g., $5\%$. In such a case, one can set $s=\lceil 0.05m\rceil$, where $\lceil t\rceil$ returns the smallest integer that is no less than $t$.
\end{itemize}

{\bf Example 1.2: 1-bit compressed sensing (1-bit CS).} It was first introduced by \cite{BB08}. The basic idea is to reconstruct the signal $\bfx$ from the signs of the coded measurements, namely, $c_i=\sign(\langle\bfa_i,\bfx\rangle+\varepsilon_i), ~i\in\N_m$, where $\bfa_i\in\R^{ n}$ is the measurement, $c_i\in\{1,-1\}$ is the 1-bit measurement and  $\varepsilon_i$ is the noise.  Originally, the model takes the form:
\begin{eqnarray}\label{1-bit-cs}
\underset{\bfx\in \R^{n}}{\min}~\|\bfx\|_1,~~
\mbox{s.t.} ~ c_i \langle\bfa_i,\bfx\rangle \geq {0}, i\in\N_m,~ \|\bfx\|=1,
\end{eqnarray}
where $\|\bfx\|_1:=\sum_i|x_i|$ is the $\ell_1$ norm.  A common approach of dealing with these inequality  constraints in \eqref{1-bit-cs} is the same as that of processing \eqref{SM-SVM}, namely, penalizing them via some loss functions, such as 
$\sum(c_i-\langle\bfa_i,\bfx\rangle)^2$ in \cite{huang2018robust} and $\|(A\bfx  )_+\|^p_p$ with $p=1,2$ in \cite{laska2011trust,  jacques2013robust}, where $A$ is  defined  similarly in SVM. Particularly,   in \cite{dai2016noisy}, $\|(A\bfx+\epsilon {\bf1} )_+\|_0$ was benefited for quantifying the number of the incorrect signs, where $\epsilon$ is a given positive parameter to eliminate the zero solution.  Therefore, if the recovered signal is allowed to have $s$ incorrect signs,  we then could take the following Heaviside set constrained model into consideration:
 \begin{eqnarray}\label{1-bit-cs-l01}
\underset{\bfx\in \R^{n}}{\min}~ \phi(\bfx)+\eta\|\bfx\|^2,~~
\mbox{s.t.}~\| (A\bfx+\epsilon {\bf1})_+\|_0\leq s, 
\end{eqnarray} 
where $\phi$ is the function   pursuing  the sparse structure of a solution. For example,  the $\ell_1$ norm $\|\bfx\|_1$, which means the objective function in \eqref{1-bit-cs-l01} is the elastic net  \cite{zou2005regularization}, the smoothing $\ell_q$ norm $\sum_{i=1}^n(x_i^2+\varepsilon)^{q/2}$  with $0<q<1$ and $\varepsilon>0$ \cite{lai2013improved}, or the log penalty $\sum_{i=1}^n{\rm ln}(|x_i|+\varepsilon)$  with $\varepsilon>0$, which has a close relationship with the weighted $\ell_1$ norm \cite{candes2008enhancing}. Again, we would like to emphasize that the Heaviside set constrained model \eqref{1-bit-cs-l01} well captures the binary status of the data and thus is more robust to outliers,  in the meantime, the selection of the involved integer $s$ is easier  than that of $\mu$.

\subsection{Contributions} 
It is known that most conventional approaches to deal with the Heaviside step function involved optimization problems are based on surrogates. The relationships between the solutions to the surrogates and their original problems are unravelled, partially because of the hardness of establishing the optimality conditions of the original problems. However, this paper conquers the hardness and contributes to the following aspects.
\begin{itemize}
\item[1)] As far as we know, this is the first paper where the optimization problem \eqref{ell-0-1}, whose constraint function $\|(\cdot)_+\|_0$ is a combination of the sparsity and the Heaviside step function, is studied. It is motivated by at least two important real applications: the SVM and 1-bit CS problems. Compared with their conventional optimization models,  the Heaviside set constrained counterparts have their advantages. Moreover, it is known that the sparse set $\{\bfz\in\R^m:\|\bfz\|_0\leq s\}$  a union of finitely many subspaces, while the Heaviside set in \eqref{l01-s} is a union of finitely many polyhedral sets, which incurs tremendous difficulties. 
However, we succeed in the face of such difficulties.
\item[2)] To establish the optimality conditions of the problem  \eqref{ell-0-1}, we first investigate the properties of the   Heaviside set $S$ defined by \eqref{l01-s}, including calculating the projection of a given point onto $S$ by \cref{pro-1}, and deriving the Bouligand tangent cone and Fr$\acute{e}$chet normal cone  of $S$ by  \cref{tangent-S} and \cref{normal-S}. 
\item[3)] The feasible region of the problem \eqref{ell-0-1} turns out to be nonempty if $A$ has a row rank $m-s$ by  \cref{feasibility} for a given $s$.  To well understand the solutions to the problem, the normal cone of $S$ and the projection of $S$  respectively allow  us to define KKT points and $\tau$-stationary points to \eqref{ell-0-1}. We show that one of a local minimizer, a KKT point and a $\tau$-stationary point can be the other under some mild conditions. For example, a $\tau$-stationary point is a KKT point that is also a local minimizer if $f$ is convex. And under a mild condition, a local minimizer is a KKT point which is also a $\tau$-stationary point. Detailed relationships are summarized in  \cref{cor-relation}. 
\item[4)] A $\tau$-stationary point can be expressed by an equation system that enables us to take advantage of the Newton method. The proposed method is dubbed as {\tt NHS}, an abbreviation for the Newton method solving the Heaviside set constrained optimization.  It turns out to enjoy a locally quadratic convergence property under the standard assumptions, see \cref{the:quadratic}. However, when it comes to numerical computing, the involved parameter $s$ in \eqref{ell-0-1} is unknown beforehand in general. We thus integrate a tuning strategy of updating $s$ during the process in {\tt NHS}, which gives rise to  {\tt NHST} in  \cref{Alg-NL01-s}. Such a strategy makes  {\tt NHS} work relatively well  when it is benchmarked against several leading solvers for addressing the SVM and 1-bit CS problems. 
\end{itemize}

\subsection{Organization}
This paper is organized as follows. In the next section, we analyze the Heaviside set $S$, calculating the projection of one point onto it and deriving its  tangent cone and normal cone. In  \cref{sec:opt}, we first show the nonemptiness of the feasible region of the problem \eqref{ell-0-1}. Then based on the normal cone of $S$, we establish two kinds of optimality conditions of the problem \eqref{ell-0-1}: KKT points and $\tau$-stationary points, followed by the establishments of their relationships to the local or global minimizers. 
In \cref{sec:Newton}, Newton type method {\tt NHS} is designed to solve the $\tau$-stationary equations stemmed from the $\tau$-stationary points, and its locally quadratic convergence property is then achieved. In \cref{sec:numerical}, we develop an improved scheme of  {\tt NHS} (dubbed as {\tt NHST}), where a tuning strategy to select the unknown parameter $s$ is integrated, and then conduct extensive numerical experiments to demonstrate that {\tt NHST} is relatively competitive, when it is against a few leading solvers for addressing the SVM and 1-bit CS problems. Concluding remarks are given in the last section of this paper. 

\subsection{Notation} 
We end this section with defining some notation employed throughout this paper.  Give a subset $T\subseteq\N_m$, its cardinality and complementary set are $|T|$ and $\overline T$. For a vector $\bfz \in \R^{m}$, we define
\begin{eqnarray}\label{notation-z}
\begin{array}{lcllll}
|\bfz|&:=&(|z_1|,\cdots,|z_m|)^\top,  &\Gamma_+&:=&\left\{i\in\N_m:~ z_i>0  \right\},\\
\supp(\bfz)&:=&\{i\in \N_m: z_i\neq0\} , &\Gamma_0&:=&\left\{i\in\N_m:~ z_i=0 \right\},\\
U( \bfz ,\epsilon)&:=&\{\bfv\in\R^m: \|\bfv- \bfz \|<\epsilon\} ,~~ &\Gamma_-&:=&\left\{i\in\N_m:~ z_i<0 \right\}, 
\end{array}
\end{eqnarray}
where $\supp(\bfz)$ is the support of $\bfz$ and  $ U( \bfz ,\epsilon)$ is the neighbourhood of $ \bfz $ with a radius $\epsilon>0$. Note that $\Gamma_+, \Gamma_0$ and $\Gamma_-$ should depend  on $\bfz$. We drop their dependence if no extra explanations are provided since it would not cause confusion in the context.  Let $z_{[s]}$ be the $s$th largest element of $\bfz_+$, which means it is zero when $\|\bfz_+\|_0<s$ and positive otherwise. In addition, $\bfz_{T}$ (resp. $A_{T}$) represents the sub-vector (resp. sub-matrix) contains elements (resp. rows) of $\bfz$ indexed on $T$ and $(\bfx;\bfy):=(\bfx^\top~\bfy^\top)^\top$. For a scalar $z$, $\lceil z\rceil$ represents the smallest integer that is no less than $z$. The $i$th largest singular value of $H\in\R^{n\times n}$ is denoted by $\sigma_{i}(H)$, namely $\sigma_{1}(H)\geq\sigma_{2}(H)\geq\cdots\geq\sigma_{n}(H).$
Particularly, we write $\|H\|:=\sigma_{1}(H)$ (i.e., the spectral norm) and $\sigma_{\min}(H):=\sigma_{n}(H).$

\section{Properties of the Heaviside Set}
In this section, we pay our attention on the Heaviside set $S$ in \eqref{l01-s}, aiming at deriving the projection of a point onto it and its tangent and normal cones. To proceed that, we need the following notation that are used throughout the paper. Let $\Gamma_+, \Gamma_0$ and $\Gamma_-$ be given by \eqref{notation-z}. Denote
\begin{equation}
\label{T-z}
\T(\bfz;s):=\left\{T= (\Gamma_+\setminus\Gamma_{s})\cup \Gamma_0:  
\begin{array}{l}
\forall~\Gamma_{s}\subseteq  \Gamma_+, |\Gamma_{s}|=\min\{s, |\Gamma_+|\} \\
\forall~i\in \Gamma_{s},~z_i\geq z_j\geq0,~ \forall~j\in T 
\end{array}
\right\}.
\end{equation}
It is easy to see that for any $T\in\T(\bfz;s)$, 
\beq\label{z0+-Gamma+}  \overline T={\Gamma_{s} \cup  \Gamma_-}, ~~\|\bfz_+\|_0=|\Gamma_+|.\eeq Therefore,
if $\|\bfz_+\|_0\leq s$ then $\Gamma_{s}=\Gamma_+$ and hence $\T(\bfz;s)=\{\Gamma_0\}, \overline T=\Gamma_{+}\cup \Gamma_-$. If $\|\bfz_+\|_0 > s$, then $T\in \T(\bfz;s)$ captures the indices of the $|\Gamma_+|-|\Gamma_s|+|\Gamma_0|$ smallest non-negative entries of $\bfz$. Taking $\bfz=(3,2,2,0,-2)^\top$ for an instance, $\T(\bfz;3)=\{\{4\}\}$ and $\T(\bfz;2)=\{\{2,4\},\{3,4\}\}$.

\subsection{Projection}
For a nonempty and closed set $\Omega\subseteq\R^m$, the projection $\P_\Omega(\bfz)$ of $\bfz\in\R^m$ onto $\Omega$ is given by
\begin{eqnarray}
\P_\Omega(\bfz) = {\rm argmin} ~  \Big\{ \|\bfz-\bfu\|: \bfu\in\Omega \Big\}.
\end{eqnarray}
It is well known that the solution set of the right hand side problem is unique when $\Omega$ is convex and might have multiple elements otherwise in general.  The following property shows that the projection onto $S$ enjoys a the closed form.
\begin{proposition}\label{pro-1} Let $\T(\bfz;s)$ be defined by (\ref{T-z}). Then
\begin{equation}\label{psz}
\P_S(\bfz)= \Big\{\left(0;\bfz_{\overline{T}}\right): T\in\T(\bfz;s) \Big\}.
\end{equation} 
\end{proposition}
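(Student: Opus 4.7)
My plan is to establish the set equality \eqref{psz} through a sharp lower bound on $\|\bfz - \bfu\|^2$ for $\bfu \in S$ that is attained exactly by the points on the right-hand side of \eqref{psz}.

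First I would fix an arbitrary $\bfu \in S$ and set $J := \{i \in \N_m : u_i > 0\}$, so that $|J| \leq s$. Splitting the squared distance over $J$ and $\N_m \setminus J$ and using $u_i \leq 0$ for $i \notin J$, the inequality $(z_i - u_i)^2 \geq z_i^2$ will hold whenever $z_i > 0$ and $i \notin J$, yielding
\[
\|\bfz - \bfu\|^2 \;\geq\; \sum_{i \in \Gamma_+ \setminus J} z_i^2.
\]
Since $|J \cap \Gamma_+| \leq \min(s, |\Gamma_+|) = |\Gamma_s|$, the set $\Gamma_+ \setminus J$ has at least $|\Gamma_+| - |\Gamma_s|$ elements, and a standard rearrangement using the fact that $\Gamma_s$ collects the $|\Gamma_s|$ largest values of $\bfz_{\Gamma_+}$ will refine this to $\|\bfz - \bfu\|^2 \geq \sum_{i \in \Gamma_+ \setminus \Gamma_s} z_i^2$.

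Next, for any $T \in \T(\bfz;s)$ I would verify that $\bfu^* := (0; \bfz_{\overline{T}})$ is feasible and attains this lower bound. By \eqref{z0+-Gamma+} the positive entries of $\bfu^*$ lie inside $\Gamma_s \subseteq \overline{T}$, so $\|\bfu^*_+\|_0 \leq |\Gamma_s| \leq s$ and hence $\bfu^* \in S$. A direct computation using $z_i = 0$ on $\Gamma_0 \subseteq T$ gives $\|\bfz - \bfu^*\|^2 = \sum_{i \in T} z_i^2 = \sum_{i \in \Gamma_+ \setminus \Gamma_s} z_i^2$, matching the bound, so every such $\bfu^*$ lies in $\P_S(\bfz)$.

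For the reverse inclusion, I would show any minimizer $\bfu^* \in \P_S(\bfz)$ must be of the stated form by arguing that the inequalities above saturate. Writing $J^* := \{i : u_i^* > 0\}$, a short perturbation argument (zeroing out $u_i^*$ whenever $z_i \leq 0 < u_i^*$ strictly decreases the distance while preserving feasibility) will force $J^* \subseteq \Gamma_+$; saturation of the first inequality will then force $u_i^* = z_i$ on $J^* \cup \Gamma_- \cup \Gamma_0$ and $u_i^* = 0$ on $\Gamma_+ \setminus J^*$; and saturation of the rearrangement will force $J^*$ to index the $|\Gamma_s|$ largest values of $\bfz_{\Gamma_+}$, so $J^* = \Gamma_s$ for some admissible choice, whence $T := (\Gamma_+ \setminus J^*) \cup \Gamma_0 \in \T(\bfz;s)$ realizes $\bfu^*$ as $(0; \bfz_{\overline{T}})$. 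The main obstacle will be this reverse inclusion, since when $\bfz$ has ties among its positive entries at the cutoff value $z_{[s]}$, one must check that the definition of $\T(\bfz;s)$ in \eqref{T-z} is rich enough to capture every admissible selection of the top $|\Gamma_s|$ indices, so that no minimizer is missed and no spurious forms slip through.
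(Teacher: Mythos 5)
Your argument is correct and complete; the paper itself omits the proof of \cref{pro-1} as ``simple,'' so there is no authors' proof to compare against, but your lower-bound-plus-saturation argument is exactly the standard one the statement calls for. In particular you correctly handle the two points that need care: that each positive entry removed from $\Gamma_+$ is optimally set to $0$ rather than to any negative value, and that ties at the cutoff value $z_{[s]}$ are exactly accounted for by the multiple admissible choices of $\Gamma_s$ in \eqref{T-z}.
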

The proof is simple and thus is omitted here. Every point in the projection set is calculated by setting $\bfz_T=0$ for  $T\in\T(\bfz;s)$.   We  give an example to illustrate \eqref{psz}. Again consider the point $\bfz=(3,2,2,0,-2)^\top$. Then $\T(\bfz;3)=\{\{4\}\}, \P_S(\bfz)=\{\bfz\}$  and $\T(\bfz;2)=\{\{2,4\},\{3,4\}\}, \P_S(\bfz)=\{(3,0,2,0,-2)^\top,$ $ (3,2,0,0,-2)^\top\}$. Our next concept associated with the projection $\P_S(\bfz)$ is a fixed point inclusion.
\begin{proposition}\label{pro-eta} Given  $\bfl\in\R^m$ and $\tau >0$, a point $\bfy\in\R^m$  satisfies  
\begin{eqnarray}\label{uPu}
\bfy \in \P_S({\bfy}+\tau \bfl) 
\end{eqnarray}
if and only if it satisfies that
\begin{eqnarray}\label{uPu-equ}
\|\bfy_+\|_0\leq s,~~ \lambda_i 
\begin{cases}
=0,&i\in\supp(\bfy),\\
\in\left[0, y_{[s]}/\tau \right],&i\notin\supp(\bfy).
\end{cases}
\end{eqnarray}
\end{proposition}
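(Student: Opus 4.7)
The plan is to combine \cref{pro-1} with a direct unpacking of the definition of $\T(\bfz;s)$, where I set $\bfz:=\bfy+\tau\bfl$. By \cref{pro-1}, the inclusion \eqref{uPu} is equivalent to the existence of some $T\in\T(\bfz;s)$ satisfying
\[
\bfy_T=0 \qquad\text{and}\qquad \bfy_{\overline T}=\bfz_{\overline T}=\bfy_{\overline T}+\tau\bfl_{\overline T},
\]
and since $\tau>0$ the second equality collapses to $\bfl_{\overline T}=0$. So essentially the proof reduces to reading off what $T$ must look like in terms of the sign pattern of $\bfy$.

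For the forward direction, I would argue as follows. Since $\bfy_T=0$, $\supp(\bfy)\subseteq \overline T$, so $\bfl_{\overline T}=0$ immediately gives $\lambda_i=0$ for every $i\in\supp(\bfy)$; and from $\overline T=\Gamma_s\cup\Gamma_-$ in \eqref{z0+-Gamma+} together with the sign correspondence $z_i=y_i$ on $\overline T$, one gets $\Gamma_s=\{i:y_i>0\}$ and $\Gamma_-=\{i:y_i<0\}$, which already implies $\|\bfy_+\|_0=|\Gamma_s|\le s$. For $i\notin\supp(\bfy)$, $y_i=0$ forces $i\in T$ (otherwise $i$ would lie in $\Gamma_s\cup\Gamma_-$, contradicting $y_i=0$), hence $\lambda_i=z_i/\tau$; the defining inequalities of $\T(\bfz;s)$ then give $0\le z_i\le \min_{j\in\Gamma_s}z_j = y_{[s]}$, so $\lambda_i\in[0,y_{[s]}/\tau]$ as required. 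The small subtlety is the case $\|\bfy_+\|_0<s$, in which $y_{[s]}=0$ and the bound degenerates to $\lambda_i=0$; I need to observe that $|\Gamma_s|=|\Gamma_+|$ in this regime forces $T=\Gamma_0$, i.e.\ $z_i=0$ and hence $\lambda_i=0$, which is consistent.

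For the converse, I would construct an explicit $T$ realising $\bfy$ as an element of $\P_S(\bfz)$. Under the stated conditions, for $i\in\supp(\bfy)$ we have $z_i=y_i$, while for $i\notin\supp(\bfy)$ we have $z_i=\tau\lambda_i\in[0,y_{[s]}]$. I would take
\[
T:=\{i\in\N_m:\,y_i=0\}, \qquad \Gamma_s:=\{i:y_i>0\},
\]
and check that this choice satisfies the definition \eqref{T-z} for $\bfz$. The key inequalities $z_i\ge z_j\ge 0$ for $i\in\Gamma_s$ and $j\in T$ follow because $z_i=y_i\ge y_{[s]}\ge \tau\lambda_j=z_j\ge 0$, with the boundary case $\|\bfy_+\|_0<s$ again requiring the observation that then $y_{[s]}=0$ collapses all of $\tau\lambda_j$ to zero. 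Finally $\bfy_T=0$ by construction and $\bfz_{\overline T}=\bfy_{\overline T}$ because $\lambda_i=0$ on $\supp(\bfy)=\overline T$, so $\bfy=(0;\bfz_{\overline T})\in\P_S(\bfz)$ by \cref{pro-1}.

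The routine part is the bookkeeping on the index sets; the one place that requires care (and which I would treat as the main obstacle) is making the two regimes $\|\bfy_+\|_0=s$ and $\|\bfy_+\|_0<s$ uniform, since in the latter $y_{[s]}=0$ and the interval $[0,y_{[s]}/\tau]$ shrinks to $\{0\}$, so I must make sure the $\Gamma_s$ I choose is admissible (in particular $|\Gamma_s|=\min\{s,|\Gamma_+|\}$) and that no positive $\lambda_i$ can sneak into $T$ and violate the ordering condition.
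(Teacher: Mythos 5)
Your proof is correct and follows essentially the same route as the paper's: both reduce \eqref{uPu} via \cref{pro-1} to the existence of a $T\in\T(\bfy+\tau\bfl;s)$ with $\bfy_T=0$ and $\bfl_{\overline T}=0$, identify $\overline T=\supp(\bfy)$, and read off the bounds from the ordering condition in \eqref{T-z}, splitting on $\|\bfy_+\|_0=s$ versus $\|\bfy_+\|_0<s$. The only differences are that you spell out the ``if'' direction the paper dismisses as direct verification, and you settle the $\|\bfy_+\|_0<s$ case by noting $\Gamma_s=\Gamma_+$ forces $T=\Gamma_0$ (hence $\lambda_i=0$ there) rather than by the paper's contradiction argument --- both are fine.
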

\begin{proof} Direct verification can show that a point satisfying (\ref{uPu-equ}) also satisfies (\ref{uPu}). So we only prove `only if' part. It follows from \cref{pro-1} that
$$\bfy\in\P_S({\bfy}+\tau \bfl)=\left\{\left[
\begin{array}{c}
0\\
\bfy_{\overline T}+\tau\bfl_{\overline T}
\end{array}
\right]:~T\in\T(\bfy+\tau\bfl;s)\right\}
.$$
This derives $\|\bfy_+\|_0\leq s$, and for any $T\in\T(\bfy+\tau\bfl;s)$,
\beq\label{pro-z-T}\bfy_T=0,~~~\bfl_{\overline{T}}=0,~~~\bfy+\tau\bfl=\left[
\begin{array}{c}
\tau\bfl_T\\
\bfy_{\overline{T}} 
\end{array}\right],\eeq
which together with the definition of $\T(\bfy+\tau\bfl;s)$ in \eqref{T-z} gives rise to 
\beq\label{pro-supp-T}
 T =(\Gamma_+\setminus\Gamma_{s})\cup \Gamma_0,~~\overline T=\Gamma_s   \cup \Gamma_-=\supp(\bfy),\eeq
where $\Gamma_+, \Gamma_-$ and $\Gamma_0$ are defined as \eqref{notation-z} in which $\bfz$ is replaced by $\bfy+\tau\bfl$. \\
For $\|\bfy_+\|_0<s$, we must have $\bfl_{ {T}}=0$, leading to $\bfl=0$ and showing \eqref{uPu-equ}. If fact, suppose there is an $i\in T$ such that $\lambda_i\neq 0$. If $\lambda_i>0$, then $\bfy+\tau\bfl$ has at least $\|\bfy_+\|_0+1\leq s$ positive entries and thus $\|\bfy_+\|_0=\|(\P_S({\bfy}+\tau \bfl))_+\|_0 \geq \|\bfy_+\|_0+1$, a contradiction. If  $\lambda_i < 0$, then $y_i+\tau \lambda_i=\tau \lambda_i<0$ due to $i\in T$ and $\bfy_T=0$, which means $i\in\Gamma_- \subseteq \overline T$, another contradiction. \\
For $\|\bfy_+\|_0=s$, \eqref{uPu-equ} is satisfied  for any $j\in\supp(\bfy)= \overline T$ due to  $\bfl_{\overline T}=0$.   For $j\notin\supp(\bfy)$, namely, $j\in {T}$,   the definition of $\Gamma_s$ in \eqref{T-z} yields
\begin{eqnarray*}
\forall~j\in  T,~~  0\leq y_j+\tau  \lambda_j \leq  y_i +\tau  \lambda_i , ~~\forall~i\in  \Gamma_s,
\end{eqnarray*}
which together with  $\Gamma_s\subseteq \overline T$ and \eqref{pro-z-T} results in
\begin{eqnarray*}
\forall~j\in  T,~~  0\leq\tau  \lambda_j\leq y_i, ~~ \forall~i\in \Gamma_s.
\end{eqnarray*}
Hence,  $0\leq\tau \lambda_j\leq  {\min}_{i\in\Gamma_s}y_i= y_{[s]}, \forall~j\in  T (j\notin\supp(\bfy))$, showing \eqref{uPu-equ}. \qed
\end{proof}
\subsection{Tangent and Normal cones}
Recalling that for any nonempty set $\Omega\subseteq \mathbb{R}^{m}$, its  Bouligand tangent cone  $T_{\Omega}(\bfz)$  and corresponding  Fr$\acute{e}$chet normal cone  $\widehat{N}_{\Omega}(\bfz)$  at point $\bfz\in\Omega$ are defined as \cite{RW1998}:
\begin{eqnarray}\label{Tangent Cone}
T_{\Omega}(\bfz)&:=&  \left\{~\bfd\in\mathbb{R}^{m}:
 \begin{array}{r}
\exists~\eta_k\geq0,\{\bfz^k\}\subseteq\Omega, \underset{k\rightarrow \infty}{\lim}\bfz^k=\bfz\\
 \text{such~that}~\underset{k\rightarrow \infty}{\lim}\eta_k(\bfz^k-\bfz)=\bfd
\end{array}\right\},\\
\label{Normal Cone} \widehat{N}_{\Omega}(\bfz)&:=& \Big\{~\bfd\in\mathbb{R}^{m}:~\langle \bfd, \bfu\rangle\leq0,~\forall~\bfu\in T_{\Omega}(\bfz)~\Big\}.
\end{eqnarray}
To acquire these cones of the Heaviside set $S$ in \eqref{l01-s}, for a point $\bfz\in S$, we define the following index set 
\begin{eqnarray}\label{index-u}
\begin{array}{ccl}
{\mathcal P}&:=&\{  \Gamma \subseteq \Gamma_0:~ |\Gamma|\leq s- |\Gamma_+|\},
\end{array}
\end{eqnarray}
where  $\Gamma_0$ and $\Gamma_+$ are given by \eqref{notation-z}. One can discern that since $\|\bfz_+\|_0= |\Gamma_+|$ by \eqref{z0+-Gamma+},  ${\mathcal P}=\emptyset$ if $\|\bfz_+\|_0=s$ and  ${\mathcal P}\neq\emptyset$ if $\|\bfz_+\|_0<s$ and $\Gamma_0\neq \emptyset$. The definition of \eqref{index-u} allows for expressing the Bouligand tangent cone of $S$ explicitly by the following theorem. 
\begin{proposition}
  \label{tangent-S}
The  Bouligand tangent cone $T_{S}(\bfz)$    at  $\bfz\in S$ is given by
\begin{eqnarray}
\label{TS}T_{S}(\bfz)&=& \underset{\Gamma \in \mathcal{P}}{\bigcup}\Big\{\bfd\in\mathbb{R}^{m}: \bfd_{\Gamma_0\setminus \Gamma} \leq { 0 }\Big\}. 
\end{eqnarray}
\end{proposition}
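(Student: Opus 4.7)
The plan is a standard two-inclusion argument, exploiting the fact that signs of coordinates of $\bfz$ persist under small perturbations and using a subsequence argument to pin down a single index set $\Gamma \in \mathcal{P}$ that works for a given tangent direction.

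First I would show the inclusion ``$\supseteq$.'' Fix $\Gamma \in \mathcal{P}$ and a vector $\bfd \in \R^m$ with $\bfd_{\Gamma_0 \setminus \Gamma} \leq 0$, and construct the explicit sequence $\bfz^k := \bfz + (1/k)\bfd$ with $\eta_k := k$. Clearly $\bfz^k \to \bfz$ and $\eta_k(\bfz^k - \bfz) = \bfd$, so the only thing to verify is that $\bfz^k \in S$ for all sufficiently large $k$. For this I would split coordinates according to $\Gamma_+,\Gamma_-,\Gamma,\Gamma_0\setminus\Gamma$: on $\Gamma_+$ (resp.\ $\Gamma_-$) the entries $z_i^k$ stay strictly positive (resp.\ negative) for large $k$ by continuity; on $\Gamma_0 \setminus \Gamma$ we have $z_i = 0$ and $d_i \leq 0$, giving $z_i^k \leq 0$; only on $\Gamma$ can new positive entries appear. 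Thus $\|\bfz^k_+\|_0 \leq |\Gamma_+| + |\Gamma| \leq s$ by the definition of $\mathcal{P}$ in \eqref{index-u}, so $\bfz^k \in S$ and $\bfd \in T_S(\bfz)$.

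Next I would prove the inclusion ``$\subseteq$.'' Take any $\bfd \in T_S(\bfz)$ with witnesses $\{\bfz^k\}\subseteq S$, $\eta_k \geq 0$ satisfying $\bfz^k \to \bfz$ and $\eta_k(\bfz^k - \bfz) \to \bfd$. For large $k$, $z_i^k$ has the same strict sign as $z_i$ on $\Gamma_+ \cup \Gamma_-$, so these coordinates impose no constraint on $\bfd$. On $\Gamma_0$ I set $\Gamma^k := \{i \in \Gamma_0 : z_i^k > 0\}$; the constraint $\|\bfz^k_+\|_0 \leq s$ reads $|\Gamma_+| + |\Gamma^k| \leq s$, i.e.\ $|\Gamma^k| \leq s - |\Gamma_+|$. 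Since $\Gamma^k$ ranges over a finite collection of subsets of $\Gamma_0$, I pass to a subsequence along which $\Gamma^k \equiv \Gamma$ is constant; then $\Gamma \in \mathcal{P}$. For every $i \in \Gamma_0 \setminus \Gamma$ we have $z_i^k \leq 0$ (and $z_i = 0$), hence $d_i = \lim_k \eta_k z_i^k \leq 0$, placing $\bfd$ in the $\Gamma$-piece of the right-hand side of \eqref{TS}.

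I do not anticipate a serious obstacle; the argument is largely combinatorial bookkeeping of sign patterns. The one mild subtlety worth being careful about is that $\eta_k$ may be unbounded, so in the ``$\subseteq$'' direction the statements ``$z_i^k \to z_i$ with the correct sign'' must be justified purely from $\bfz^k \to \bfz$ (not from $\eta_k(\bfz^k-\bfz)\to \bfd$), and the subsequence extraction must preserve this. Also, although trivial, I would note explicitly that $\mathcal{P}$ always contains $\emptyset$ (since $\bfz \in S$ forces $|\Gamma_+|\leq s$), so the union in \eqref{TS} is nonempty; in particular when $\Gamma_0 = \emptyset$ the formula correctly collapses to $T_S(\bfz) = \R^m$, reflecting that $\bfz$ then lies in the interior of $S$.
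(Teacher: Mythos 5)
Your proposal is correct and follows essentially the same route as the paper's proof: the same explicit line construction $\bfz^k=\bfz+\eta_k\bfd$ for the inclusion ``$\supseteq$,'' and the same sign-persistence plus finite-subsequence extraction of a constant index set $\Gamma^k\equiv\Gamma\in\mathcal P$ for ``$\subseteq$.'' Your closing remarks (that $\emptyset\in\mathcal P$ whenever $\bfz\in S$, and the care needed because $\eta_k$ may be unbounded) are accurate and, if anything, tidy up a small notational slip in the paper, which elsewhere writes $\mathcal P=\emptyset$ when it means $\mathcal P=\{\emptyset\}$.
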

\begin{proof}  Let $\Phi(\bfz)$ the set in right hand side of (\ref{TS}). We first verify the inclusion $T_{S}(\bfz)\subseteq \Phi(\bfz)$. Consider any $\bfd\in T_{S}(\bfz)$. Then it follows from (\ref{Tangent Cone}) that $\exists~\eta_k\geq0, \{\bfz^k\}\subseteq S,  \bfz^k\rightarrow\bfz$ such that $\eta_k(\bfz^k-\bfz)\rightarrow\bfd$. 
By $\bfz^k\rightarrow\bfz$, we have that for sufficiently large $k$,
 \begin{eqnarray}
&&\bfz^k_{\Gamma_{+}} \rightarrow \bfz _{\Gamma_{+}}>{ 0 } \Longrightarrow \bfz^k_{\Gamma_{+}} >{ 0 },\nonumber\\
\label{uku0}&&\bfz^k_{\Gamma_{-}} \rightarrow \bfz _{\Gamma_{-}}<{ 0 } \Longrightarrow\bfz^k_{\Gamma_{-}} <{ 0 }, \\
&&\bfz^k_{\Gamma_{0}} ~\rightarrow  \bfz_{\Gamma_{0}}={ 0 }.\nonumber
\end{eqnarray}
The index of each positive element of $\bfz^k$ is contained by  either  $\Gamma_{+}$ or $J_k:=\{i\in \Gamma_{0}: z_i^k>0\}$. It follows from $\|(\bfz^k)_+\|_0\leq s$ that $|J_k|\leq s-|\Gamma_{+}|$ and hence $J_k\in \mathcal{P}$. Since  $\Gamma_{0}$ has finitely many elements, $\{J_k\}$ is bounded and thus has a subsequence $J_{k} \equiv :J, \forall k\in\K$, where $\K$ is a subsequence of $\{1,2,3,\cdots\}$. What is more, we can conclude that for any $i\in \Gamma_{0} \setminus J$, we have $z_{i}^{k}\leq 0,k\in\K$. In fact, if there is an $i_0\in \Gamma_{0} \setminus J$ such that   $z^{t}_{i_0}> 0,t\in \K_1$, where $\K_1$ is a subsequence of $\K$. Then we let $J=J\cup\{i_0\}$ and consider the sub-subsequence $\{\bfz^{t},t\in \K_1\}$. 
So without loss of any generality, we focus on the subsequence  $\{\bfz^{k},k\in\K\}$ that satisfies
\begin{eqnarray}\label{uk<0}
z^{k}_j 
>0,~ j\in J,~~~~
z^{k}_i \leq0,~ i \in \Gamma_{0}\setminus J. 
\end{eqnarray}
Let $k\in\K$ and $k\rightarrow \infty$. The above conditions lead  to
\begin{eqnarray*}
\bfd_{\Gamma_{0} \setminus J}=\lim_{{k} {\rightarrow}\infty}\eta_{k}(\bfz^{k}-\bfz)_{\Gamma_{0} \setminus J}
\overset{(\ref{uku0})}{=}\lim_{{k} {\rightarrow}\infty}\eta_{k}
\bfz^{k}_{\Gamma_{0} \setminus J}
\overset{(\ref{uk<0})}{\leq}{ 0 }. \end{eqnarray*}
This and $J\in\mathcal{P}$ from $J_k\in \mathcal{P}$ show $\bfd\in \Phi(\bfz) $, which verifies $T_{S}(\bfz)\subseteq \Phi(\bfz)$.

Next we show $T_{S}(\bfz)\supseteq \Phi(\bfz)$. For any $\bfd\in \Phi(\bfz)$, there is  a $\Gamma\in\mathcal{P}$ such that $\bfd_{\Gamma_{0} \setminus\Gamma}\leq { 0 }$. Consider a positive sequence  $\eta_k\rightarrow 0$ and let $\bfz^k=\bfz+\eta_k \bfd $. These indicate  $\bfz^k\rightarrow\bfz$ and  $(\bfz^k-\bfz)/\eta_k=\bfd$. To check $\bfz^k\in S$, decompose $\bfz^k $ as
\begin{eqnarray*}
\bfz^k=\bfz+ \eta_k \bfd  = \left[\begin{array}{l}
(\bfz+ \eta_k\bfd  ) _{\Gamma_{-}}\\
(\bfz + \eta_k\bfd )_{\Gamma_{+}\cup \Gamma}\\
(\bfz + \eta_k\bfd )_{\Gamma_{0} \setminus\Gamma}\\
\end{array}\right]\overset{(\ref{uku0})}{=} \left[\begin{array}{l}
(\bfz+ \eta_k\bfd ) _{\Gamma_{-}}\\
(\bfz +\eta_k\bfd )_{\Gamma_{+}\cup \Gamma}\\
~~~~~~~ \eta_k  \bfd_{\Gamma_{0} \setminus\Gamma}    \\
\end{array}\right].
 \end{eqnarray*}
For sufficiently large $k$, it holds $\bfz_{\Gamma_{-}} + \eta_k\bfd _{\Gamma_{-}} \leq { 0 } $ by the definition of $\Gamma_-$ in \eqref{notation-z}, which together with $\eta_k\bfd_{\Gamma_{0} \setminus\Gamma}    \leq { 0 }$ suffices to 
\begin{eqnarray*}\|(\bfz^k)_+\|_0&=&\|((\bfz + \eta_k\bfd)_{\Gamma_{+}\cup \Gamma})_+\|_0\\
&\leq&|\Gamma_{+}| + | \Gamma|\leq |\Gamma_{+}|+s-|\Gamma_{+}|= s,\end{eqnarray*}
 where the last inequality is due to $\Gamma\in\mathcal{P}$.
Therefore, we have $\bfd\in T_{S}(\bfz)$, which verifies $T_{S}(\bfz)\supseteq \Phi(\bfz)$. This completes the whole proof. \qed
\end{proof}
The direct verification  allows us to derive the Fr$\acute{e}$chet normal cone of $S$ by (\ref{Normal Cone}). 
\begin{proposition}
  \label{normal-S}
The Fr$\acute{e}$chet normal cone $\widehat{N}_{S}(\bfz)$  at  $\bfz\in S$ can be expressed as
\begin{eqnarray}
\label{NS-exp}
\widehat{N}_{S}(\bfz)=
\begin{cases}
\left\{ \bfd\in\mathbb{R}^{m}:~  
 d_i\left\{
 \begin{array}{ll}
 =0&{\rm if}~~ z_i\neq 0\\
 \geq 0&{\rm if}~~ z_i= 0\\
 \end{array}\right.\right\},&\|\bfz_+\|_0=s,   \\
\hspace{3cm}\{ 0  \}, &\|\bfz_+\|_0<s.
\end{cases}
\end{eqnarray}
\end{proposition}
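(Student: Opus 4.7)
The plan is to derive the normal cone directly from the definition \eqref{Normal Cone} using the already-computed tangent cone formula \eqref{TS} from \cref{tangent-S}. I split into the two cases in the statement and, for each, exploit the fact that on certain coordinates the tangent vectors may be chosen freely (forcing $d_i=0$), while on the remaining coordinates they are sign-constrained.

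\textbf{Case $\|\bfz_+\|_0 = s$.} Here $|\Gamma_+| = s$, so the index set $\mathcal{P}$ from \eqref{index-u} contains only $\Gamma = \emptyset$. By \eqref{TS},
\[
T_S(\bfz) = \{\bfu \in \R^m : \bfu_{\Gamma_0} \leq 0\},
\]
with $\bfu_{\Gamma_+ \cup \Gamma_-}$ completely unrestricted. A vector $\bfd$ belongs to $\widehat{N}_S(\bfz)$ iff $\langle \bfd,\bfu\rangle \leq 0$ for every such $\bfu$. Testing with $\bfu = \pm e_i$ for $i \in \Gamma_+ \cup \Gamma_-$ (which lies in $T_S(\bfz)$ since $u_{\Gamma_0}=0$) forces $d_i = 0$ on the nonzero support of $\bfz$. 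Then on $\Gamma_0$, the constraint reduces to $\sum_{i\in\Gamma_0} d_i u_i \leq 0$ for all $\bfu_{\Gamma_0}\leq 0$, which is equivalent to $d_i\geq 0$ for $i\in \Gamma_0$. Conversely, any $\bfd$ satisfying these sign conditions clearly belongs to $\widehat{N}_S(\bfz)$.

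\textbf{Case $\|\bfz_+\|_0 < s$.} The goal is to show $T_S(\bfz)$ is large enough to force $\bfd = 0$. First, $\emptyset\in\mathcal{P}$, so $\{\bfu : \bfu_{\Gamma_0}\leq 0\}\subseteq T_S(\bfz)$, which as in Case 1 forces $d_i = 0$ for all $i$ with $z_i\neq 0$. Second, since $s - |\Gamma_+| \geq 1$, for each $i \in \Gamma_0$ the singleton $\Gamma = \{i\}$ lies in $\mathcal{P}$, so the slice $\{\bfu : \bfu_{\Gamma_0\setminus\{i\}}\leq 0\}$ is contained in $T_S(\bfz)$. Testing with $\bfu = \pm e_i$ (for which $u_j=0$ on $\Gamma_0\setminus\{i\}$) yields $\pm d_i \leq 0$, hence $d_i = 0$ on $\Gamma_0$ as well. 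If $\Gamma_0 = \emptyset$ this step is vacuous and all components of $\bfd$ have already been killed in the first step. In every sub-case $\bfd = 0$, proving $\widehat{N}_S(\bfz) = \{0\}$.

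The routine nature of the argument means there is no serious analytic obstacle; the only care needed is the bookkeeping between the index sets $\Gamma_+$, $\Gamma_0$, $\Gamma_-$ and the admissible $\Gamma\in\mathcal{P}$, particularly making sure that $\Gamma = \{i\}$ is a legal choice in the second case (which is exactly the strict inequality $|\Gamma_+|<s$). This is where the dichotomy $\|\bfz_+\|_0 = s$ vs.\ $\|\bfz_+\|_0 < s$ enters decisively: once $|\Gamma_+|<s$ one can always "activate" any zero coordinate of $\bfz$ in the tangent cone and thereby remove it from the normal cone.
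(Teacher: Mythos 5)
Your proof is correct and follows essentially the same route as the paper's: both compute $\widehat{N}_S(\bfz)$ as the polar of the tangent cone from \cref{tangent-S}, split on $\|\bfz_+\|_0=s$ versus $\|\bfz_+\|_0<s$, and in the latter case use the key observation that each singleton $\{i\}\subseteq\Gamma_0$ lies in $\mathcal{P}$ so that every zero coordinate can be ``freed'' in the tangent cone, forcing $d_i=0$. Your write-up is merely more explicit in carrying out the polar computation with the test vectors $\pm e_i$, which the paper leaves as a direct verification.
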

\begin{proof} If $\|\bfz_+\|_0=s$, then  $|\Gamma_{+}|=s$, leading to  ${\mathcal P}=\emptyset$. This together with \eqref{TS} yields $T_{S}(\bfz)=\left\{\bfd\in\mathbb{R}^{m}: \bfd_{\Gamma_0} \leq { 0 }\right\} $ and hence derives \eqref{NS-exp} by the definition \eqref{Normal Cone} of $\widehat{N}_{S}(\bfz)$. Now consider the case  $\|\bfz_+\|_0<s$. If $\Gamma_0=\emptyset$, then  $T_{S}(\bfz)=\left\{\bfd\in\mathbb{R}^{m}: \bfd_{\emptyset} \leq { 0 }\right\} $ and hence gives rise to \eqref{NS-exp}.
If $| \Gamma_0| \geq  1$, then  for any $i\in \Gamma_0$, $\{i\}\in \mathcal P$ because of $|\{i\}|=1\leq s- \|\bfz_+\|_0=s-|\Gamma_+|$. This means for any $\bfd\in T_S(\bfz)$, it holds $\bfd_{\Gamma_0\setminus \{i\}}\leq 0$ and $d_j\in\R, j\in\{i\}\cup\overline{\Gamma}_0$, as a result, $\bfd'_{\{i\}\cup\overline{\Gamma}_0}=0$  for any $\bfd'\in \widehat{N}_{S}(\bfz)$. The arbitrariness of $i$ taken  from $\Gamma_0$ suffices to $\bfd'=0$.\qed
\end{proof}
We end this section with giving one example to illustrate the  tangent and normal cones of $S$ at three points: $\bfz_1=(0,1)^\top$, $\bfz_2=(-1,0)^\top$ and $\bfz_3=(0,0)^\top$ in two dimensional space,  namely, $$S=\{\bfz\in\R^2:~\|\bfz_+\|_0\leq 1\}=\{\bfz\in\R^2:~z_1\leq 0~{\rm or}~z_2\leq 0\}.$$ 
 It can be easily seen that 
\begin{eqnarray*}\begin{array}{llllll}
T_{S}(\bfz_1)&=& \{\bfd\in\mathbb{R}^{2}:d_1 \leq 0 \},~~&
\widehat{N}_{S}(\bfz_1) &=&  \{ \bfd\in\mathbb{R}^{2}:d_1\geq0,d_2=0 \},\\
T_{S}(\bfz_2)&=&\mathbb{R}^{2},&
\widehat{N}_{S}(\bfz_2)&=&\left\{ { 0 } \right\},\\
T_{S}(\bfz_3)&=&S,&\widehat{N}_{S}(\bfz_3)&=&\left\{ { 0 } \right\}.
\end{array}
\end{eqnarray*}

\section{Optimality Conditions} \label{sec:opt}
The first issue we encounter is the feasibility  of the problem (\ref{ell-0-1}), which is guaranteed by the following theorem, where the assumption can be guaranteed if $A$ has a row rank $m-s$.
\begin{theorem}\label{feasibility} For any given $s\in\N_m$, the problem (\ref{ell-0-1}) is feasible if  there is a $T\subseteq \N_m$ with $|T|\geq m-s$ such that $A_T$ is full row rank.
\end{theorem}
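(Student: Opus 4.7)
The plan is to reinterpret the constraint $\|(A\bfx-\bfb)_+\|_0\leq s$ in terms of how many coordinates of $A\bfx-\bfb$ must be forced non-positive, and then to produce an explicit feasible point by solving a linear system on the index set $T$.

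First I would observe that $\|(A\bfx-\bfb)_+\|_0$ counts only the \emph{strictly positive} entries of $A\bfx-\bfb$, so the constraint is equivalent to saying that at least $m-s$ coordinates of $A\bfx-\bfb$ are $\leq 0$. In particular, if one can exhibit some index set $T'\subseteq\N_m$ with $|T'|\geq m-s$ and a point $\bfx^\ast\in\R^n$ satisfying $(A\bfx^\ast-\bfb)_{T'}\leq 0$, then at most $m-|T'|\leq s$ coordinates remain and can contribute to $\|(A\bfx^\ast-\bfb)_+\|_0$, making $\bfx^\ast$ feasible for \eqref{ell-0-1}.

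Next I would use the hypothesis to take $T\subseteq\N_m$ with $|T|\geq m-s$ and $A_T$ of full row rank. Full row rank of $A_T$ means $A_TA_T^\top\in\R^{|T|\times |T|}$ is invertible, so the linear system $A_T\bfx=\bfb_T$ admits the explicit solution
\[
\bfx^\ast := A_T^\top(A_TA_T^\top)^{-1}\bfb_T.
\]
Then $(A\bfx^\ast-\bfb)_T = A_T\bfx^\ast-\bfb_T = 0$, which is in particular $\leq 0$ on the $|T|\geq m-s$ indices in $T$. Applying the observation from the first step with $T'=T$ yields $\|(A\bfx^\ast-\bfb)_+\|_0\leq m-|T|\leq s$, so $\bfx^\ast$ is feasible, completing the proof.

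There is no real obstacle here: once one notices that the Heaviside-style constraint penalizes only positive entries, the proof reduces to solving a solvable full-row-rank linear system. The only small thing to be careful about is distinguishing $\leq 0$ from $=0$ in the argument, so that the reader sees why setting $A_T\bfx^\ast=\bfb_T$ (rather than merely $A_T\bfx^\ast\leq\bfb_T$) suffices, and why $m\geq |T|$ together with $n\geq m$ ensures solvability of the linear system.
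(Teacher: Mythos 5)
Your proof is correct, and it takes a slightly different (and arguably more elementary) route than the paper's. Both arguments share the same first step: since $\|(\cdot)_+\|_0$ counts only strictly positive entries, it suffices to force the coordinates indexed by $T$ to be $\leq 0$, leaving at most $m-|T|\leq s$ coordinates that can be positive. Where you diverge is in how you produce such a point. The paper shows that the \emph{inequality} system $A_T\bfx\leq\bfb_T$ is solvable by invoking a theorem of the alternative (Gale's theorem): the dual system $A_T^\top\bfu=0$, $\bfu\geq0$, $\langle\bfu,\bfb_T\rangle<0$ forces $\bfu=0$ under full row rank, a contradiction. You instead solve the \emph{equality} system $A_T\bfx=\bfb_T$ outright with the explicit right pseudo-inverse $\bfx^\ast=A_T^\top(A_TA_T^\top)^{-1}\bfb_T$, which is legitimate precisely because full row rank makes $A_TA_T^\top$ invertible. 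Your construction is fully constructive, avoids citing a separation/alternative theorem, and even hands the reader a concrete feasible point; the paper's argument is the one that would survive a weakening of the hypothesis in which $A_T$ is not full row rank but the alternative system still happens to be infeasible for the given $\bfb_T$. Under the stated hypothesis the two are equally rigorous, and your version is the cleaner of the two.
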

\begin{proof} By assumption, there is a $T\subseteq \N_m$ with $|T|\geq m-s$ such that $A_T$ is full row rank. If the solution set of $A_T \bfx \leq\bfb_T$ is nonempty, then we have 
 \begin{eqnarray*}\|(A \bfx -\bfb)_+\|_0 &=&\|(A_T \bfx -\bfb_T)_+\|_0 +\|(A_{\overline{T}} \bfx -\bfb_{\overline{T}})_+\|_0\\
 & =&\|(A_{\overline{T}} \bfx -\bfb_{\overline{T}})_+\|_0\leq m-|T| \leq  s.\end{eqnarray*}
 Therefore, we need to prove that the solution set of $A_T \bfx \leq\bfb_T$ is nonempty. It follows from \cite[Theorem 7]{perng2017class} or \cite{gale1989theory} that $A_T \bfx \leq\bfb_T$ admits a solution if and only if the following system has no solution,
 $$A_T^\top \bfu =0,~~ \bfu\geq0,~~ \langle \bfu,~\bfb_T \rangle <0.$$
Apparently, $\bfu=0$ from the first equation since $A_T$ is full row rank, causing a contradiction  $0=\langle \bfu,~\bfb_T \rangle <0$, which shows the desired result.\qed
\end{proof}
Next, we establish  the first order necessary and sufficient optimality conditions of  (\ref{ell-0-1}). To proceed this, we  consider an  equivalent formulation of the problem (\ref{ell-0-1}),
\begin{eqnarray}
\label{ell-0-2}
\underset{\bfx,\bfy}{\min}&&  f(\bfx),\\
{\rm s.t.}&& \bfy=A \bfx -\bfb,~ \| \bfy_+\|_0  \leq  s.\nonumber 
\end{eqnarray}
Before the main theorems ahead of us, we first denote the feasible sets by
 \begin{eqnarray}
\label{S1} \F&:=&\Big\{(\bfx, \bfy)\in\R^n\times\R^m:~\bfy = A \bfx-\bfb,\| \bfy_+\|_0 \leq  s \Big\}, \\
\F_1&:=&\Big\{(\bfx, \bfy)\in\R^n\times\R^m:~\bfy = A \bfx-\bfb\Big\}.\nonumber
 \end{eqnarray}
 Given a point $(\bfx^*,\bfy^*)\in\F$ and a constant $\delta>0$, denote 
  \begin{eqnarray} 
\label{delta*}   
\delta_* &:=&
 \left\{\begin{array}{ll}
 \min \Big\{\delta, \sqrt{2m}~\underset{i}{\min}~ \{|y_i^*|: y_i^*\neq0\} \Big\},&~~\bfy^*\neq{ 0 }, \\
 \delta&~~\bfy^*={ 0 }, 
 \end{array}
 \right.\\
J_*&:=& \{i\in\N_m: (A\bfx^*-\bfb)_i=0\}, ~~~~
\delta_m~:=~ {\delta_* }/{\sqrt{2m}}. 
  \nonumber
 \end{eqnarray}
 Based on above constants, we also define a local region  of $(\bfx^*,\bfy^*)$ by
  \begin{eqnarray}\label{F-*}
\F_* &:=&
 \Big\{(\bfx, \bfy)\in\R^n\times\R^m:
 \|\bfx-\bfx^*\|<\delta_* /\sqrt{2}, ~\bfy\in S_*
\Big\}, \end{eqnarray}
where $S_*$ is given by
  \begin{eqnarray}\label{S-*}
  S_* &:=&\left\{\begin{array}{lll}
  ~~~~~~\left\{
 \bfy\in\R^m:
 \begin{array}{rccl}
 |y_i -y_i^*|&\leq& \delta_m,&i\in \overline{J}_* \\
  -\delta_m~\leq~y_i &\leq&  0,&i\in J_* 
  \end{array}
 \right\}, & \|\bfy^*_+\|_0=s,\\
 ~\\
 \underset{j\in J_*}{ \bigcup}\left\{
 \bfy\in\R^m:
 \begin{array}{rccl}
 |y_i -y_i^*|&\leq& \delta_m,&i\in \overline{J}_*\cup\{j\}\\
  -\delta_m~\leq~ y_i &\leq& 0,&i\in J_*\setminus\{j\}
  \end{array}
 \right\}, & \|\bfy^*_+\|_0<s. 
 \end{array}\right.
 \end{eqnarray}
 Some properties of above sets are given as below.
\begin{lemma} \label{lemma:F123}
 Consider a point $(\bfx^*,\bfy^*) \in \F$.
  The following properties hold.
 \begin{itemize}
 \item[a)] $S_*\subseteq S$ and $\widehat N_{S_*}(\bfy^*)=\widehat{N}_{S}(\bfy^*)$.
  \item[b)] $\F_*  \subseteq U(\bfx^*, \bfy^*,\delta_*)$.
  \item[c)]$\widehat N_{\F_1\cap \F_*}(\bfx^*, \bfy^*)=\widehat N_{\F_1}(\bfx^*, \bfy^*)+\widehat N_{\F_*}(\bfx^*, \bfy^*)$ holds if $A_{J_*}$ is  full row rank.
 \end{itemize}
 \end{lemma}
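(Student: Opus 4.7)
The proof breaks naturally into three parts, with the technical weight concentrated in part (c). Throughout, the decisive observation is that $\delta_m = \delta_*/\sqrt{2m}$ is dominated by $\min\{|y_i^*|:y_i^*\neq 0\}$ whenever $\bfy^*\neq 0$, so the constraints $|y_i - y_i^*|\leq \delta_m$ on $\overline J_*$ preserve signs and cannot cause a non-positive $y_i^*$ to turn positive.

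For part (a), I would first verify $S_*\subseteq S$ by splitting on $\|\bfy^*_+\|_0$. When $\|\bfy^*_+\|_0 = s$, sign-preservation on $\overline J_*$ together with $y_i\leq 0$ on $J_*$ yields $\|\bfy_+\|_0 \leq \|\bfy^*_+\|_0 = s$; when $\|\bfy^*_+\|_0 < s$, each piece $S_*^{(j)}$ in the union admits at most one additional positive coordinate (the index $j$), so $\|\bfy_+\|_0 \leq \|\bfy^*_+\|_0 + 1\leq s$. For the normal-cone identity, I would read $T_{S_*}(\bfy^*)$ off the active box constraints: in the first case $T_{S_*}(\bfy^*) = \{\bfd: d_i\leq 0,\ i\in J_*\}$ is polyhedral and its polar matches the first branch of \cref{normal-S}; in the second case $T_{S_*}(\bfy^*) = \bigcup_{j\in J_*}\{\bfd: d_i\leq 0,\ i\in J_*\setminus\{j\}\}$ and its polar $\bigcap_{j\in J_*}\widehat N_{S_*^{(j)}}(\bfy^*)$ forces $d_i=0$ for every $i\in\overline J_*\cup\{j\}$, hence $\bfd = 0$, matching the second branch.

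Part (b) is a one-line computation: in either case every coordinate of $\bfy\in S_*$ satisfies $|y_i - y_i^*|\leq \delta_m$, so $\|\bfy - \bfy^*\|^2\leq m\delta_m^2 = \delta_*^2/2$; combined with the strict $\|\bfx - \bfx^*\|^2 < \delta_*^2/2$, this gives $\|(\bfx-\bfx^*;\bfy-\bfy^*)\|<\delta_*$.

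Part (c) is the main obstacle. The inclusion $\supseteq$ is the general sum rule, so the work is in $\subseteq$. Exploit the affine structure $\F_1 = \{(\bfx, A\bfx - \bfb)\}$: any tangent to $\F_1$ has the form $(\bfd, A\bfd)$ and $\widehat N_{\F_1}(\bfx^*, \bfy^*) = \{(-A^\top\bfv, \bfv):\bfv\in\R^m\}$, while $\widehat N_{\F_*}(\bfx^*,\bfy^*) = \{0\}\times \widehat N_{S_*}(\bfy^*)$ since $\bfx^*$ lies in the interior of the $\bfx$-ball. When $\|\bfy^*_+\|_0 = s$, $\F_1\cap \F_*$ is polyhedral near $(\bfx^*,\bfy^*)$, and the polar of a polyhedral intersection equals the sum of polars, so the identity follows with no rank hypothesis. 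When $\|\bfy^*_+\|_0 < s$, I would decompose $\F_1\cap \F_* = \bigcup_{j\in J_*}(\F_1\cap \F_*^{(j)})$, whence $\widehat N_{\F_1\cap \F_*}(\bfx^*,\bfy^*) = \bigcap_{j\in J_*}\widehat N_{\F_1\cap \F_*^{(j)}}(\bfx^*,\bfy^*)$. A Farkas computation on each piece gives $(\bfp,\bfq)\in\widehat N_{\F_1\cap \F_*^{(j)}}(\bfx^*,\bfy^*)$ iff $\bfp + A^\top\bfq \in A_{J_*\setminus\{j\}}^\top\R_+^{|J_*|-1}$. The crux then reduces to the linear-algebra claim $\bigcap_{j\in J_*}A_{J_*\setminus\{j\}}^\top\R_+^{|J_*|-1} = \{0\}$: any common element $\bfg$ admits, for each $j$, a non-negative expansion in the rows $\{A_i: i\in J_*\setminus\{j\}\}$, and uniqueness of representation in the linearly independent rows of $A_{J_*}$ (the full-row-rank hypothesis) forces every coefficient to vanish. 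This yields $\bfp + A^\top\bfq = 0$, placing $(\bfp,\bfq)\in\widehat N_{\F_1}(\bfx^*,\bfy^*)$; combined with $\widehat N_{\F_*}(\bfx^*,\bfy^*) = \{0\}$ from part (a), this closes the identity.
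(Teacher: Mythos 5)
Your proposal is correct. Parts (a) and (b) are essentially the paper's own argument: sign preservation on $\overline J_*$ via the choice of $\delta_m$, the union-to-intersection formula for the normal cone of $S_*=\bigcup_j S_j$ (the paper cites a proposition for normal cones of unions of convex sets, you get the same thing by polarizing $T_{\bigcup_j S_j}=\bigcup_j T_{S_j}$), and the coordinatewise bound $\|\bfy-\bfy^*\|^2\le m\delta_m^2=\delta_*^2/2$. Part (c) is where you genuinely diverge, and your route is cleaner. The paper handles each piece $\F_1\cap(X\times S_j)$ by invoking the intersection rule of Rockafellar--Wets (Theorem 6.42), which requires verifying the constraint qualification $(A^\top\bfl,-\bfl)+(0,\bfd)=0\Rightarrow\bfl=\bfd=0$ via the full row rank of $A_{J_*}$, and then runs a separate two-representation argument (comparing $\bfl^1,\bfd^1$ against $\bfl^2,\bfd^2$ across distinct $i,j\in J_*$) to show the intersection of the resulting sums collapses to $\widehat N_{\F_1}(\bfx^*,\bfy^*)$. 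You instead observe that each piece is locally a polyhedron whose only active constraints are $\bfy=A\bfx-\bfb$ and $y_i\le 0$ for $i\in J_*\setminus\{j\}$, compute its normal cone exactly by polyhedral calculus as $\{(\bfp,\bfq):\bfp+A^\top\bfq\in A_{J_*\setminus\{j\}}^\top\R_+^{|J_*|-1}\}$, and reduce the collapse of the intersection to the one-line fact that a vector admitting, for every $j$, a representation in the linearly independent rows $\{A_i:i\in J_*\setminus\{j\}\}$ must be zero. This buys two things: the constraint qualification and the two-representation bookkeeping disappear into a single uniqueness-of-coordinates argument, and in the case $\|\bfy^*_+\|_0=s$ the polyhedral sum rule gives the identity with no rank hypothesis at all, whereas the paper uses the rank condition in both cases. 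The only caveat is the degenerate situation $J_*=\emptyset$ with $\|\bfy^*_+\|_0<s$, where $S_*$ as defined is an empty union; neither your argument nor the paper's addresses it, so it is not a gap relative to the paper.
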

\begin{proof}  a)  Since $(\bfx^*,\bfy^*) \in \F$,  from \eqref{delta*}, we have $$J_*=\{i\in\N_m: (A\bfx^*-\bfb)_i=0\}=\{i\in\N_m: y^*_i=0\}.$$    
If $\|\bfy^*_+\|_0=s$, then for any $\bfy\in S_*$ and any $i\in \overline{J}_*$, it follows
 \begin{eqnarray}
 \label{uup}y_i^*>0,~ |y_i -y_i^*|\leq\delta_m &~~\Longrightarrow~~& y_i \geq~y_i^*- \delta_m\overset{\eqref{delta*}}{\geq} ~y_i^*- \underset{i\in \overline{J}_*}{\min} |y_i^*| \geq 0,~~\\
 \label{uun}y_i^*<0,~ |y_i -y_i^*|\leq \delta_m&~~\Longrightarrow~~& y_i \leq~ y_i^*+\delta_m\overset{\eqref{delta*}}{\leq} ~y_i^*+ \underset{i\in \overline{J}_*}{\min} |y_i^*| \leq 0.~~
 \end{eqnarray}
 These mean $\bfy$ has at most $\|\bfy^*_+\|_0$ positive elements, therefore, $\bfy\in S$ and $S_*  \subseteq S$. By the convexity of $S_*$, one can easily to derive that
  \begin{eqnarray}\label{normal-S2}
\widehat N_{S_*}(\bfy^*)&=& \left\{\bfd\in\R^m:~d_i\left\{
 \begin{array}{ll}
 = 0, &~ i\in \overline{J}_*\\
 \geq 0, &~ i\in J_*\\
 \end{array}
 \right.\right\}
\overset{\eqref{NS-exp}}{=}\widehat{N}_{S}(\bfy^*). 
 \end{eqnarray}
If $\|\bfy^*_+\|_0<s$, by denoting $S_{j}$ as
$$ S_{j}:=\left\{\bfy\in\R^m:\begin{array}{ll}
 |y_i -y_i^*|\leq \delta_m,&i\in \overline{J}_*\cup\{j\}\\
  -\delta_m\leq y_i \leq 0,&i\in J_* \setminus\{j\}
  \end{array}
 \right\},~~{\rm then}~~S_*={\bigcup}_{j\in J_* }S_{j}$$
by \eqref{S-*}. For any $\bfy\in S_*$, there is one $j\in J_*$  such that $\bfy\in S_{j}$. The same reasoning to show \eqref{uup} and \eqref{uun} is able to show that $\bfy$ has at most $\|\bfy^*_+\|_0+1\leq s$ positive elements. Therefore, $\bfy\in S$ and  $S_*  \subseteq S$. By the convexity of $S_j$, it follows
 \begin{eqnarray}
\label{normal-S2-1-0}  \widehat N_{S_{j}}(\bfy^*)  
 &= & \left\{\bfd\in\R^m:~d_i\left\{
 \begin{array}{ll}
 = 0, &~ i\in \overline{J}_*\cup\{j\}\\
 \geq 0, &~ i\in J_*\setminus \{j\}\\
 \end{array}
 \right.\right\}. 
 \end{eqnarray}
In addition, for a group of convex sets $\Omega_{i}$, it follows from  \cite[Proposition 3.1]{ban2011lipschitzian} that  
 \begin{eqnarray}\label{cond-pro31}
  \widehat N_{\underset{i }{\bigcup} \Omega_{i}}  (\bfx) 
= \bigcap_{i\in \{i:~\bfx\in \Omega_{i}\}} \widehat N_{\Omega_{i}}(\bfx).
 \end{eqnarray}
Therefore, $S_*={\bigcup}_{j\in J_* }S_{j}$ with each $S_{j} $ being convex and $\bfy^*\in S_{j}, \forall~j\in J_*$ yield
 \begin{eqnarray}
\widehat N_{S_*}(\bfy^*)
& 
\overset{\eqref{cond-pro31}}{=} &\bigcap_{j\in J_*} \widehat N_{S_{j}}(\bfy^*) \nonumber\\
\label{normal-S2-1} &\overset{\eqref{normal-S2-1-0}}{=} &\{0\}\overset{\eqref{NS-exp}}{=}\widehat{N}_{S}(\bfy^*).
 \end{eqnarray}
 b) For any $(\bfx, \bfy)\in \F_*$, where $\bfy\in S_*={\bigcup}_{j\in J_*}S_{j}$,  there exists $j\in J_*$  such that $\bfy\in S_{j}$.  Therefore, $\F_* \subseteq U(\bfx^*, \bfy^*,\delta_*)$ holds immediately owing to
  \begin{eqnarray*}
\|\bfx-\bfx^*\|^2+\| \bfy-\bfy^*\|^2&<&\frac{\delta_*^2}{2}+
\| (\bfy-\bfy^*)_{\overline{J}_*\cup\{j\}}\|^2+\| (\bfy-\bfy^*)_{J_* \setminus\{j\}}\|^2
\\&<&\frac{\delta_*^2}{2}  +\frac{\delta_*^2}{2m}\Big[|\overline{J}_*|+1\Big]+\frac{\delta_*^2}{2m}\Big[ |J_* |-1\Big]=\delta^2_*.\end{eqnarray*}
c) Let $X := \{\bfx\in\R^n:
 \|\bfx-\bfx^*\|<\delta_* /\sqrt{2}\}$. We prove the conclusion by two cases.  {\bf Case I} $\|\bfy^*_+\|_0<s$. Obviously, $\F_1\cap(X \times S_{j}) $ is convex and  $(\bfx^*, \bfy^*)\in\F_1\cap(X \times S_{j}) $. It has
 \begin{eqnarray*}
\F_1\cap \F_* &=& \F_1 \cap \left[ X \times S_* \right]= \F_1 \cap  \Big[ X \times \underset{j\in J_* }{\bigcup}S_{j}  \Big]\\
 &=&  \F_1 \cap  \Big[\underset{j\in J_* }{\bigcup}   X \times S_{j}  \Big]=
  \underset{j\in J_*}{\bigcup} \Big[\F_1\cap(X \times S_{j} )\Big]. 
\end{eqnarray*}
These three facts results in
 \begin{eqnarray}\label{N-F1-F3}
\widehat N_{\F_1\cap \F_*}(\bfx^*, \bfy^*)\overset{\eqref{cond-pro31}}{=}
 \underset{j\in J_*}{\bigcap} \widehat N_{\F_1\cap(X \times S_{j} ) }(\bfx^*, \bfy^*).
\end{eqnarray}
Next, we prove that
\begin{eqnarray} \label{N-X-Fi}
\widehat N_{\F_1\cap(X \times S_{j} ) }(\bfx^*, \bfy^*)=\widehat N_{\F_1}(\bfx^*, \bfy^*)+\widehat N_{X \times S_{j}}(\bfx^*, \bfy^*).
\end{eqnarray}
 In fact,   \cite[Proposition 2.12]{MN} states that  \begin{eqnarray}\label{normal-F1}
\widehat N_{\F_1}(\bfx^*,\bfy^*)=\left\{ (A^\top \bfl, -\bfl)\in\R^n\times\R^m: \bfl\in \R^m\right\} \end{eqnarray} 
and  \cite[Theorem 6.41]{RW1998} states that
\begin{eqnarray}\label{normal-F3}
\widehat N_{X \times S_{j}}(\bfx^*,\bfy^*)=\{{ 0 }\}\times \widehat N_{S_{j}}(\bfy^*).
\end{eqnarray} 
Note that  $\F_1$ and $ X \times S_{j}$ are convex and thus are regular at $\bfx^*, \bfy^*$ regarding  \cite[Definition 6.4]{RW1998}.  Therefore, to check \eqref{N-X-Fi},  \cite[Theorem 6.42]{RW1998} indicates that we only need to check the following inclusion  
   \begin{eqnarray}\label{inclusion-lam-d-0}(A^\top \bfl,  -\bfl)+( { 0 }, \bfd)={ 0 }~~\Longrightarrow~~ (A^\top \bfl,  -\bfl)=( { 0 }, \bfd)={ 0 } \end{eqnarray}
holds for any $ (A^\top \bfl,  -\bfl)\in \widehat N_{\F_1}(\bfx^*, \bfy^*)$ and $ ( { 0 }, \bfd)\in \widehat N_{X \times S_{j}}(\bfx^*, \bfy^*)$ with $\bfd \in \widehat N_{ S_{j}}( \bfy^*)$.  The condition in \eqref{inclusion-lam-d-0} delivers 
$\bfl_{ \overline{J}_*}=-\bfd_{ \overline{J}_*}=0$ by \eqref{normal-S2-1-0} and hence
\begin{eqnarray} 
0\overset{\eqref{inclusion-lam-d-0}}{=}A^\top \bfl&=&A^\top _{J_*}\bfl_{J_*}+A^\top _{\overline J_*}\bfl_{\overline J_*}=A^\top _{J_*}\bfl_{J_*}. 
\end{eqnarray}
As a result, $\bfl_{J_*}=0$ since $A_{J_*}$ is full row rank,  showing $\bfl=\bfd=0$ and thus \eqref{inclusion-lam-d-0}. Namely,   \eqref{N-X-Fi} is true, which contributes to
 \begin{eqnarray} \label{N-PHI}
\widehat N_{\F_1\cap \F_*}(\bfx^*, \bfy^*)  
 & \overset{\eqref{N-F1-F3},\eqref{N-X-Fi}}{=} &
 \underset{j\in J_* }{\bigcap}\Big[\underset{\Phi_{j}}{\underbrace{\widehat N_{\F_1}(\bfx^*, \bfy^*)+\widehat N_{X \times S_{j}}(\bfx^*, \bfy^*)}} \Big].
\end{eqnarray} 
We now conclude that 
\begin{eqnarray}\label{NF1F3}
\widehat N_{\F_1\cap \F_*}(\bfx^*, \bfy^*) = \widehat N_{\F_1}(\bfx^*, \bfy^*).
\end{eqnarray}
To show this, we first verify $ \widehat N_{\F_1}(\bfx^*, \bfy^*) \subseteq \widehat N_{\F_1\cap \F_*}(\bfx^*, \bfy^*)$. This is clearly true because of $0\in \widehat N_{X \times S_{j}}(\bfx^*, \bfy^*)$ for any $j\in J_*$ and
$$\widehat N_{\F_1}(\bfx^*, \bfy^*)=  \underset{j\in J_*}\bigcap \Big[\widehat N_{\F_1}(\bfx^*, \bfy^*)+\{0\}\Big]\subseteq \underset{j\in J_*}\bigcap \Phi_{j}\overset{\eqref{N-PHI}}{=}\widehat N_{\F_1\cap \F_*}(\bfx^*, \bfy^*).$$
We next verify $\widehat N_{\F_1\cap \F_*}(\bfx^*, \bfy^*) \subseteq \widehat N_{\F_1}(\bfx^*, \bfy^*)$. 	For any  point $\bfv\in \widehat N_{\F_1\cap \F_*}(\bfx^*, \bfy^*),$ it follows from \eqref{N-PHI} that $\bfv\in \Phi_{i} \cap \Phi_{j}$ for any $i,j \in J_*$ with $i \neq j$. This means there exist  \begin{eqnarray*} \bfl^1\in \widehat N_{\F_1}(\bfx^*, \bfy^*),&& (0;\bfd^1)\in \widehat N_{X \times S_{i}}(\bfx^*, \bfy^*)\\
\bfl^2\in \widehat N_{\F_1}(\bfx^*, \bfy^*),&& (0;\bfd^2)\in \widehat N_{X \times S_{j}}(\bfx^*, \bfy^*)\end{eqnarray*} 
such that $\bfd^1_{\overline{J}_* \cup \{i\}}=\bfd^2_{\overline{J}_* \cup \{j\}}=0$ by \eqref{normal-S2-1-0} and
\begin{eqnarray} \label{v-d-d}
\bfv&=&\left[ 
A^\top \bfl^1,~ \bfl^1 \right]+\left[ 0,~\left[0;~\bfd^1_{J_*\setminus \{i\}}\right]\right]
=\left[ A^\top \bfl^1,~\bfl^1 +\left[0;~\bfd^1_{J_*\setminus \{i\}}\right]\right]\nonumber\\
&=&\left[ A^\top \bfl^2,~\bfl^2 \right]+\left[ 0,~  \left[ 0;~\bfd^2_{J_*\setminus \{j\}}\right]\right]
=\left[ A^\top \bfl^2,~\bfl^2 +\left[0;~\bfd^2_{J_*\setminus \{j\}}\right]\right],~~~~ 
\end{eqnarray} 
which results in $A^\top\bfl^1 =A^\top\bfl^2$ and $\bfl^1_{\overline{J}_*}=\bfl^2_{\overline{J}_*}$. Consequently, we have
\begin{eqnarray*} 0=A^\top(\bfl^1 -  \bfl^2 )&=&A^\top_{J_*}(\bfl^1_{J_*} -  \bfl^2_{J_*} )+A^\top_{\overline{J}_*}(\bfl^1_{\overline{J}_*} -  \bfl^2 _{\overline{J}_*})\\
&=&A^\top_{J_*}(\bfl^1_{J_*} -  \bfl^2 _{J_*}),\end{eqnarray*} 
and thus $\bfl^1_{J_*} -  \bfl^2 _{J_*}=0$ due to the full row rankness of $A_{J_*}$.  Namely, $\bfl^1=  \bfl^2$, which by \eqref{v-d-d} enables us to prove $\bfd^1_{J_*\setminus \{i\}}=\bfd^2_{J_*\setminus \{i\}}$, sufficing to $\bfd^1=  \bfd^2$.
And thus $\bfd^1_{\overline{J}_*\cup \{i,j\}} =0$ due to  $\bfd^2_{\overline{J}_*\cup \{j\}}=0$. Since $j (\neq i)$ is chosen arbitrarily from ${J_*}$, we can claim that $\bfd^1=\bfd^1_{\overline{J}_*\cup {J_*}}=0$ and thus $\bfv=(
A^\top \bfl^1,
\bfl^1)\in \widehat N_{\F_1}(\bfx^*, \bfy^*)$. Overall, we show \eqref{NF1F3}, which allows us to make the conclusion by  
\begin{eqnarray*}
\widehat N_{\F_1\cap \F_*}(\bfx^*, \bfy^*)&\overset{ \eqref{NF1F3}}{=}& \widehat N_{\F_1}(\bfx^*, \bfy^*)\\
&=&\widehat N_{\F_1}(\bfx^*, \bfy^*)+\{0\}\times \{0\}\\
&\overset{ \eqref{normal-S2-1}}{=}&
\widehat N_{\F_1}(\bfx^*, \bfy^*)+ \widehat N_{X}(\bfx^*)\times \widehat N_{S_*}(\bfy^*) \\
&=&\widehat N_{\F_1}(\bfx^*, \bfy^*)+  \widehat N_{\F_*}(\bfx^*,\bfy^*).
\end{eqnarray*}
{\bf Case II}  $\| \bfy^*_+\|_0=s$. Same reasoning to prove \eqref{N-X-Fi} is able to show
\begin{eqnarray*} 
\widehat N_{\F_1\cap(X \times S_* ) }(\bfx^*, \bfy^*)=\widehat N_{\F_1}(\bfx^*, \bfy^*)+\widehat N_{X \times S_*}(\bfx^*, \bfy^*).
\end{eqnarray*}
Then the conclusion can be made immediately due to
$\F_* = X \times\F_2$. \qed
 \end{proof}
\subsection{KKT points}
We call  $\bfx^*\in\R^n$  is a KKT point of  (\ref{ell-0-1})  if there is a $\bfl^*\in\R^m$ such that
  \begin{eqnarray}\label{KKT-point}
  \begin{cases}
  \nabla f(\bfx^*)+A^\top\bfl^*&=~~{0},\\
 ~~~\widehat{N}_S(A\bfx^*- \bfb)&\ni~~\bfl^*,\\
 ~~\|(A\bfx^*- \bfb)_+\|_0&\leq~~ s.
 \end{cases}
  \end{eqnarray}
From now on, again as in \eqref{delta*}, we always denote
\beq\label{gamma*}  J_*:=\{i\in\N_m:~(A\bfx^*-\bfb)_i = 0\}.\eeq
We first build the relation between a KKT point and a local minimizer. 
\begin{theorem}[KKT points and local minimizers]\label{nec-suff-opt-con-KKT} The following relationships hold for the problem (\ref{ell-0-1}). 
\begin{itemize}
\item[a)] A local minimizer $\bfx^*$ is a KKT point if $A_{J_*}$ is full row rank. Furthermore, if $\|(A\bfx^*-\bfb)_+\|_0<s$, then $\bfl^*=0$ and $\nabla f(\bfx^*)=0$. 
\item[b)] Suppose $f$ is convex. A KKT point is  a local minimizer   if $\|(A\bfx^*-\bfb)_+\|_0=s$ and a global minimizer if $\|(A\bfx^*-\bfb)_+\|_0<s$.
\end{itemize}
\end{theorem}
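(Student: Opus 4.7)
The plan is to work with the lifted formulation \eqref{ell-0-2} and apply a first-order necessary condition on the local region $\F_1\cap \F_*$ built in \cref{lemma:F123}.

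For part~(a), let $\bfx^*$ be a local minimizer of \eqref{ell-0-1} and set $\bfy^* := A\bfx^* - \bfb$. By parts~(a) and (b) of \cref{lemma:F123}, $\F_1 \cap \F_* \subseteq \F \cap U((\bfx^*,\bfy^*), \delta_*)$, so $(\bfx^*, \bfy^*)$ is a local minimizer of $f$ on $\F_1 \cap \F_*$ as well. The classical necessary condition yields
\[
-(\nabla f(\bfx^*), 0) \in \widehat N_{\F_1 \cap \F_*}(\bfx^*, \bfy^*).
\]
The sum-rule of \cref{lemma:F123}(c), valid because $A_{J_*}$ is assumed to have full row rank, together with the representation \eqref{normal-F1} and the product-rule identity $\widehat N_{\F_*}(\bfx^*, \bfy^*) = \{0\} \times \widehat N_{S_*}(\bfy^*)$, followed by $\widehat N_{S_*}(\bfy^*) = \widehat N_S(\bfy^*)$ from \cref{lemma:F123}(a), delivers some $\bfl^* \in \widehat N_S(\bfy^*)$ satisfying $\nabla f(\bfx^*) + A^\top \bfl^* = 0$. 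This is exactly the system \eqref{KKT-point}. When $\|\bfy^*_+\|_0 < s$, \cref{normal-S} forces $\widehat N_S(\bfy^*) = \{0\}$, hence $\bfl^* = 0$ and $\nabla f(\bfx^*) = 0$.

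For part~(b), consider a KKT pair $(\bfx^*, \bfl^*)$. Convexity of $f$ and the KKT relation $\nabla f(\bfx^*) = -A^\top \bfl^*$ give, for any feasible $\bfx$ with $\bfy := A\bfx - \bfb \in S$,
\[
f(\bfx) - f(\bfx^*) \geq \langle \nabla f(\bfx^*), \bfx - \bfx^* \rangle = -\langle \bfl^*, \bfy - \bfy^* \rangle,
\]
reducing the task to showing $\langle \bfl^*, \bfy - \bfy^* \rangle \leq 0$ on the appropriate domain. If $\|\bfy^*_+\|_0 = s$, \cref{normal-S} yields $\bfl^*_{\overline J_*} = 0$ and $\bfl^*_{J_*} \geq 0$; for any feasible $\bfy$ sufficiently close to $\bfy^*$, coordinates indexed by $\overline J_*$ retain their sign, so the bound $\|\bfy_+\|_0 \leq s = \|\bfy^*_+\|_0$ leaves no room for additional positive entries and forces $y_i \leq 0 = y_i^*$ for all $i \in J_*$; thus $\langle \bfl^*, \bfy - \bfy^* \rangle = \sum_{i \in J_*} \bfl^*_i y_i \leq 0$, producing local optimality. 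If instead $\|\bfy^*_+\|_0 < s$, part~(a) (or \cref{normal-S} directly) forces $\bfl^* = 0$ and $\nabla f(\bfx^*) = 0$; convexity of $f$ then makes $\bfx^*$ an unconstrained global minimizer of $f$ on $\R^n$, which is also a global minimizer of \eqref{ell-0-1} because $\bfx^*$ is feasible.

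The main technical obstacle is the nonconvexity of $S$, which prevents a direct application of a normal-cone sum rule to $\F_1 \cap (\R^n \times S)$. The circumvention, already built into \cref{lemma:F123}, is to localise $S$ to the convex surrogate $S_*$ (or its convex pieces $S_j$), apply the normal-cone calculus there, and then exploit the crucial coincidence $\widehat N_{S_*}(\bfy^*) = \widehat N_S(\bfy^*)$. The full-row-rank assumption on $A_{J_*}$ enters precisely to legitimise the sum rule in \cref{lemma:F123}(c); once that is in hand, everything else is bookkeeping with the cones computed in \cref{tangent-S} and \cref{normal-S}.
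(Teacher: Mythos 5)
Your proposal is correct and follows essentially the same route as the paper: part (a) lifts to the $(\bfx,\bfy)$ formulation, localises to $\F_1\cap\F_*$, and invokes the sum rule of \cref{lemma:F123}(c) together with the coincidence $\widehat N_{S_*}(\bfy^*)=\widehat N_S(\bfy^*)$; part (b) uses the convexity inequality plus the sign structure $\bfl^*_{\overline J_*}=0$, $\bfl^*_{J_*}\geq 0$ and the observation that feasibility near $\bfy^*$ forces $\bfy_{J_*}\leq 0$ when $\|\bfy^*_+\|_0=s$. The only difference is presentational: the paper makes the radius $\delta_0=\min_i\{y_i^*:y_i^*>0\}$ explicit where you say ``sufficiently close,'' which is harmless.
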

\begin{proof}  a) Consider a local  minimizer $\bfx^*$  of (\ref{ell-0-1}). Then $(\bfx^*,\bfy^*)$ with $ \bfy^* = A\bfx^*- \bfb$  is also a  local  minimizer of (\ref{ell-0-2}). 
So we have  $
 \|\bfy^*_+\|_0 \leq  s$ and there is a $\delta>0$ such that $(\bfx^*,\bfy^*)$ is a global minimizer of the following problem
\begin{eqnarray}
\label{ell-0-1-1}
\underset{\bfx,\bfy}{\min}~ f(\bfx),~~
{\rm s.t.}~   &&\bfy = A \bfx-\bfb, ~  \bfy\in S,\\
&&  (\bfx, \bfy)\in  U((\bfx^*, \bfy^*), \delta). \nonumber
\end{eqnarray}
 Now consider an even much smaller radius $\delta_*$ by \eqref{delta*}. Then, $(\bfx^*, \bfy^*)$  is a minimizer of the problem
\begin{eqnarray}
\label{ell-0-1-2}
\underset{\bfx,\bfy}{\min}~ f(\bfx),~~
{\rm s.t.}~   &&\bfy = A \bfx-\bfb, ~   \bfy\in S,\\
&&   (\bfx, \bfy)\in  U((\bfx^*, \bfy^*), \delta_*). \nonumber
\end{eqnarray}
Let $\F_1,  \F_*$ and $S_*$ be defined by \eqref{S1}, \eqref{F-*} and \eqref{S-*}. These and \cref{lemma:F123} result in
  $\F_*  \subseteq U(\bfx^*, \bfy^*,\delta_*)$   and   $\bfy\in S_*  \subseteq S$ for any $(\bfx, \bfy) \in \F_*$.
In addition,  $(\bfx^*, \bfy^*)\in \F_1\cap \F_* $. Hence, (\ref{ell-0-1-2}) allows us to conclude that $(\bfx^*, \bfy^*)$ is also a minimizer of the following problem
\begin{eqnarray*}
&&\underset{\bfx, \bfy}{\min}~~f(\bfx), ~~~~ {\rm s.t.}~~(\bfx, \bfy)\in \F_1\cap \F_*.
\end{eqnarray*} 
Based on   \cite[Theorem 6.12]{RW1998}, a  minimizer of  the above problem satisfies 
\begin{eqnarray*}\label{normal-D}-(\nabla f(\bfx^*),0)&\in& \widehat N_{\F_1\cap \F_*}(\bfx^*, \bfy^*)\\
&\overset{  \cref{lemma:F123} }{=}&\widehat N_{\F_1}(\bfx^*, \bfy^*)+\widehat N_{\F_*}(\bfx^*, \bfy^*)\\
&\overset{(\ref{normal-F1}, \ref{normal-F3}) }{=}&\{ (A^\top \bfl, -\bfl)\in\R^n\times\R^m: \bfl\in \R^m\}+\{{ 0 }\}\times \widehat N_{S_*}(\bfy^*)\\
&\overset{  \cref{lemma:F123} }{=}&\{ (A^\top \bfl, -\bfl)\in\R^n\times\R^m: \bfl\in \R^m\}+\{{ 0 }\}\times \widehat N_{S}(\bfy^*). \end{eqnarray*}
As a result, there is a $\bfl^*\in\R^m$ such that  
$$\nabla f(\bfx^*)+ A^\top\bfl^*={0},~~~~\bfl^*\in \widehat{N}_S(\bfy^*).$$
These together with $ \bfy^* = A\bfx^*- \bfb$  and  $
 \|\bfy^*_+\|_0 \leq  s$ show \eqref{KKT-point}. Furthermore, if $
 \|\bfy^*_+\|_0 <  s$, then $\widehat N_{S}(\bfy^*)=\{0\}$ from \eqref{NS-exp}, yielding $\bfl^*=0$ and $\nabla f(\bfx^*)={0}$.\\
 
b) Let $(\bfx^*, \bfl^*)$ satisfy (\ref{KKT-point}) and  $ \bfy^* = A\bfx^*- \bfb$. If $\|\bfy^*_+\|_0 <  s$, then \eqref{KKT-point} and $\widehat N_{S}(\bfy^*)=\{0\}$ from \eqref{NS-exp} suffice to $\bfl^*=0$ and $\nabla f(\bfx^*)={0}$. It follows from the convexity of $f$   that 
\begin{eqnarray*} 
f(\bfx)  
&\geq&f( \bfx^*)+\langle \nabla f( \bfx^*),\bfx-\bfx^*\rangle =  f( \bfx^*). 
\end{eqnarray*}
This displays the global optimality of $\bfx^*$.  

Now we focus on $\|\bfy^*_+\|_0 = s$.    Consider a local region $U((\bfx^*, \bfy^*), \delta_0)$ with $\delta_0:=\min_i \{ y_i^*: y_i^*>0\}$. For any $ (\bfx, \bfy)\in \F \cap U((\bfx^*, \bfy^*), \delta_0)$ where $\F$ is defined by \eqref{S1}, we claim three facts: F1)
\begin{eqnarray}\label{yyAxx}
\bfy-\bfy^*=A(\bfx-\bfx^*).
\end{eqnarray}
F2) Since  $\bfy^* = A \bfx^*-\bfb$ and the definition of $J_*$ in \eqref{gamma*} that  $\bfy^*_{J_* }=0$ and  $\bfy^*_{\overline{J}_* }\neq  0$. It follows from $\bfl^*\in \widehat{N}_S(\bfy^*)$ in \eqref{KKT-point}  and \eqref{NS-exp} that
\begin{eqnarray}\label{yTlT}
\bfy^*_{\overline{J}_* }\neq 0,~ \bfl^*_{\overline{J}_* }=0,~~~~
 \bfy^*_{J_* }= 0,~\bfl^*_{J_* }\geq0. 
\end{eqnarray}
F3) If
there is a $j\in \overline{J}_*$ such that $y_j^*>0$ but $y_j\leq 0$, then this causes the following contradiction,
\begin{eqnarray*}\delta_0^2> \|\bfx^*-\bfx \|^2+\|\bfy^*-\bfy\|^2 
 \geq |y_j^*-y_j|^2\geq |y_j^*|^2\geq\delta_0^2.\end{eqnarray*}
Therefore, for any $j\in \overline{J}_*$,  $y_j>0$ if $y_j^*>0$, which indicates $\|\bfy_+\|_0\geq \|\bfy^*_+\|_0=s$. Moreover, if there is a $j\in J_*$ such that $y_j>0$, then $\|\bfy_+\|_0\geq s+1$, contradicting with $\bfy\in S$. So we have
\begin{eqnarray}\label{yTyT}
\bfy_{J_* }\leq0~~{\rm due~to}~~\bfy^*_{J_* }= 0.
\end{eqnarray}
Finally, these three facts and the convexity of $f$ can conclude  that 
\begin{eqnarray*} 
f(\bfx)  
&\geq&f( \bfx^*)+\langle \nabla f( \bfx^*),\bfx-\bfx^*\rangle \\
&\overset{(\ref{KKT-point})}{=} &
 f( \bfx^*)-\langle \bfl^*,A(\bfx-\bfx^*)\rangle\\
& \overset{(\ref{yyAxx})}{=}& f( \bfx^*)-\langle \bfl^*,\bfy-\bfy^*\rangle\\
& \overset{(\ref{yTlT})}{=}& f( \bfx^*)-\langle \bfl^*_{J_*},\bfy_{J_*}-\bfy^*_{J_*}\rangle\\
& \overset{(\ref{yTlT})}{=}& f( \bfx^*)-\langle \bfl^*_{J_*},\bfy_{J_*}\rangle\\
 &\overset{(\ref{yTlT},\ref{yTyT})}{\geq}&  f( \bfx^*),\nonumber 
\end{eqnarray*}
which presents the global optimality of $(\bfx^*,\bfy^*)$ to the problem $\min  \{ f(\bfx):  (\bfx, \bfy)\in \F \cap U((\bfx^*, \bfy^*), \delta_0)\}
$, namely, $\bfx^*$ is a local minimizer of (\ref{ell-0-1}).
\qed
\end{proof}
\subsection{$\tau $-stationary points}
Our next result is about the $\tau $-stationary point of  (\ref{ell-0-1}). We say $\bfx^*\in\R^n$  is a $\tau $-stationary point of  (\ref{ell-0-1}) for some $\tau >0$ if there is a $\bfl^*\in\R^m$ such that
  \begin{eqnarray}\label{eta-point}
  \begin{cases}
~~~~\nabla f(\bfx^*)+A^\top\bfl^*&=~~{ 0} ,\\
\P_S\left( A\bfx^*-\bfb+\tau  \bfl^*\right)&\ni~~ A\bfx^*-\bfb.
 \end{cases}
  \end{eqnarray}
Hereafter, we also say $(\bfx^*, \bfl^*)$ is a \ts\ of  (\ref{ell-0-1}) if it satisfies \eqref{eta-point}. To establish the relationship between a  $\tau $-stationary point and a local/global minimizer of    (\ref{ell-0-1}), we need the concept of the strong  convexity.  A function $f$ is  strongly convex with a constant $M_f>0$ if for any $\bfx,\bfx'\in\R^n$ it satisfies
\begin{eqnarray}\label{strong-convex}
f(\bfx) \geq   f( \bfx')+
 \langle \nabla f( \bfx'),\bfx-\bfx'\rangle
+(M_f/2)\|\bfx-\bfx'\|^2.
\end{eqnarray}
Under the strong convexity, a  $\tau $-stationary point might be a global minimizer.
\begin{theorem}[$\tau $-stationary points and local/global minimizers]\label{nec-suff-opt-con-sta} The following results hold for the problem (\ref{ell-0-1}).
\begin{itemize}
\item[a)]Suppose $A_{J_*}$ is full row rank.  A local minimizer $\bfx^*$ is also a $\tau $-stationary point 
\begin{itemize}
\item either for any~  $\tau>0$ if $\|(A\bfx^*- \bfb)_+\|_0<s$
\item  or for any $0<\tau\leq\tau_*$ if $\|(A\bfx^*- \bfb)_+\|_0=s$, where
  \begin{eqnarray}\label{tau*}\tau_*:=\frac{(A\bfx^*- \bfb)_{[s]}}{\max_{i\in\N_m}\left| ( \Pi\nabla f( \bfx^*) )_i\right|}>0~~{\rm and}~~\Pi:=(A_{J_*}A_{J_*}^\top )^{-1} A_{J_*}.\end{eqnarray}
\end{itemize}
\item[b)] Suppose $f$ is convex. A  $\tau $-stationary point with $\tau>0$ is a local minimizer if  $\|(A\bfx^*-\bfb)_+\|_0=s$ and a global minimizer if $\|(A\bfx^*-\bfb)_+\|_0<s$.
\item[c)] Assume $f$ is strongly convex with  $M_f>0$. If $\bfx^*$ is a   $\tau $-stationary point with $\tau \geq \|A\|^2/M_f$, then it is also a global minimizer.
\end{itemize}
\end{theorem}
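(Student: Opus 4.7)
The proof naturally splits along the three parts, all threaded through \cref{pro-eta}, which is the crucial bridge between the projection-based $\tau$-stationarity and the normal-cone description \eqref{NS-exp}. For $\bfy^*:=A\bfx^*-\bfb\in S$ and a candidate multiplier $\bfl^*$, \cref{pro-eta} rewrites $\bfy^*\in\P_S(\bfy^*+\tau\bfl^*)$ as $\lambda_i^*=0$ for $i\in\supp(\bfy^*)$ and $\lambda_i^*\in[0,y^*_{[s]}/\tau]$ for $i\notin\supp(\bfy^*)$. Since $y^*_{[s]}>0$ iff $\|\bfy^*_+\|_0=s$, this precisely matches $\widehat N_S(\bfy^*)$ in the boundary case and collapses to $\bfl^*=0$ otherwise, in agreement with \eqref{NS-exp}. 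This dictionary is what lets every part essentially reduce to a rephrasing of the KKT story already established in \cref{nec-suff-opt-con-KKT}.

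For part (a), my plan is to invoke \cref{nec-suff-opt-con-KKT}(a) so that the local minimizer $\bfx^*$ is a KKT point: $\nabla f(\bfx^*)+A^\top\bfl^*=0$ and $\bfl^*\in\widehat N_S(\bfy^*)$. If $\|\bfy^*_+\|_0<s$, then \eqref{NS-exp} forces $\bfl^*=0$, so $\bfy^*+\tau\bfl^*=\bfy^*\in\P_S(\bfy^*)$ trivially for every $\tau>0$. If $\|\bfy^*_+\|_0=s$, the normal-cone condition yields $\bfl^*_{\overline{J}_*}=0$ and $\bfl^*_{J_*}\geq 0$; combined with $A_{J_*}^\top\bfl^*_{J_*}=-\nabla f(\bfx^*)$ and the full row rankness of $A_{J_*}$, this gives $\bfl^*_{J_*}=-\Pi\nabla f(\bfx^*)$. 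By the dictionary above, $\tau$-stationarity then reduces to $\tau\lambda_i^*\leq y^*_{[s]}$ on $J_*$, which is ensured precisely when $\tau\leq\tau_*$ with $\tau_*$ as in \eqref{tau*}.

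For part (b), I run the argument in reverse: \cref{pro-eta} turns $\tau$-stationarity into $\bfl^*\in\widehat N_S(\bfy^*)$, so $\bfx^*$ is a KKT point, and \cref{nec-suff-opt-con-KKT}(b) then supplies the local/global minimality under convexity. The subthreshold case $\|\bfy^*_+\|_0<s$ is sharper: $y^*_{[s]}=0$ forces $\bfl^*=0$ and $\nabla f(\bfx^*)=0$, and convexity alone yields global minimality.

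For part (c), rather than passing through KKT, I would use strong convexity directly. For any feasible $\bfx$ with $\bfy=A\bfx-\bfb\in S$, \eqref{strong-convex} together with $A^\top\bfl^*=-\nabla f(\bfx^*)$ gives
\[
f(\bfx)-f(\bfx^*)\geq -\langle\bfl^*,\bfy-\bfy^*\rangle+(M_f/2)\|\bfx-\bfx^*\|^2.
\]
The decisive ingredient is the projection inequality: since $\bfy^*$ minimizes $\|\cdot-(\bfy^*+\tau\bfl^*)\|^2$ over $S$ and $\bfy\in S$, expanding $\|\tau\bfl^*\|^2\leq\|\bfy-\bfy^*-\tau\bfl^*\|^2$ rearranges to $\langle\bfl^*,\bfy-\bfy^*\rangle\leq\|\bfy-\bfy^*\|^2/(2\tau)$. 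Combining this with $\|\bfy-\bfy^*\|^2\leq\|A\|^2\|\bfx-\bfx^*\|^2$ and $\tau\geq\|A\|^2/M_f$ renders the right-hand side of the display nonnegative. The main obstacle I anticipate is the bookkeeping in the boundary case of (a): producing the sharp constant $\tau_*$ requires explicitly inverting $A_{J_*}A_{J_*}^\top$ and carefully tracking the upper-bound clause of \cref{pro-eta}; once that is done, parts (b) and (c) follow with comparatively little work.
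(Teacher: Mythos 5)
Your proposal is correct and follows essentially the same route as the paper: part (a) passes through the KKT characterization of \cref{nec-suff-opt-con-KKT}(a) and then uses \cref{pro-eta} together with $\bfl^*_{J_*}=-\Pi\nabla f(\bfx^*)$ to meet the upper bound $\tau\lambda^*_j\le y^*_{[s]}$; part (b) reverses this to land on \cref{nec-suff-opt-con-KKT}(b); and part (c) is the same projection-inequality argument $2\langle\bfl^*,\bfy-\bfy^*\rangle\le\|\bfy-\bfy^*\|^2/\tau$ combined with strong convexity and $\|\bfy-\bfy^*\|=\|A(\bfx-\bfx^*)\|$. No substantive differences from the paper's proof.
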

\begin{proof}  a) Denote $\bfy^*:= A\bfx^*-\bfb$. It follows from   \cref{nec-suff-opt-con-KKT} that a local minimizer $\bfx^*$  is also a KKT point. Therefore, we have \eqref{KKT-point}. The normal cone $\widehat{N}_S$  at $\bfx^*$  in \eqref{NS-exp} indicates  $\bfl^*=0$  if $\|\bfy^*_+\|_0<s$, and 
\begin{eqnarray}\label{kkt-N}
\bfl_{\overline J_*}^*=0, ~~\bfy_{\overline J_*}^*\neq0, ~~~~ 
\bfl_{  J_*}^*\geq0, ~~\bfy_{ J_*}^*=0,
  \end{eqnarray}
if $\|\bfy^*_+\|_0=s$. To prove  the $\tau $-stationarity, we only need to show $\bfy^*\in\P_S\left( \bfy^*+\tau  \bfl^*\right)$ from (\ref{eta-point}), namely,  to show 
  \begin{eqnarray}\label{eta-point-1}
 \|\bfy^*_+\|_0\leq s 
 ,~~ \lambda_i^*\left\{
\begin{array}{ll}
=0,&i\in\supp(\bfy^*),\\
\in\left[0, y^*_{[s]}/\tau \right],&i\notin\supp(\bfy^*),
\end{array}
\right.
  \end{eqnarray}
from (\ref{uPu-equ}) in  \cref{pro-eta}. If $\|\bfy^*_+\|_0<s$, then $\bfl^*=0$, which derives \eqref{eta-point-1} for any $\tau>0$.

 Now consider the case $\|\bfy^*_+\|_0=s$. It suffices to $\tau_*>0$ in \eqref{tau*} because $(A\bfx^*- \bfb)_{[s]}=y^*_{[s]}>0$. It follows from   the  full row rankness of $A_{J_*}$ and  
$$0=\nabla f(\bfx^*)+A^\top\bfl^*\overset{\eqref{kkt-N}}{=} \nabla f(\bfx^*)+A^\top_{J_*}\bfl^*_{J_*}$$ 
that 
$\bfl^*_{J_*}=-\Pi\nabla f( \bfx^*),$
 which enables us to derive \eqref{eta-point-1} by $0<\tau\leq\tau_*$ and    $$\forall~j\in J_*,~~\tau \lambda_j^*\leq\tau_* \max_i|(\Pi\nabla f( \bfx^*))_i|\overset{\eqref{tau*}}{=} (A\bfx^*- \bfb)_{[s]} =y^*_{[s]}.$$ 
  
b)  A \ts\ satisfies $\bfy^*\in\P_S\left( \bfy^*+\tau  \bfl^*\right)$, implying \eqref{eta-point-1},
 which suffices to show $\bfl^*\in \widehat{N}_S(\bfy^*)$ by \eqref{NS-exp}. This together with the first condition in (\ref{eta-point}), $\bfy^*= A\bfx^*-\bfb$ and  $\|\bfy^*_+\|_0\leq s $ exhibits $\bfx^*$  satisfying (\ref{KKT-point}), a KKT point. Then the conclusion holds immediately by \cref{nec-suff-opt-con-KKT} b).\\
 
 c) The second inclusion $\bfy^*\in\P_S\left( \bfy^*+\tau  \bfl^*\right)$ of (\ref{eta-point})   indicates 
    \begin{eqnarray}\label{eta-point-2}
 \|\bfy^*-(\bfy^*+\tau  \bfl^*)\|^2 \leq  \|\bfy -(\bfy^*+\tau  \bfl^*)\|^2
  \end{eqnarray}
  for any $\bfy\in S$, which leads to the fact that
     \begin{eqnarray} \label{eta-point-3-0} 
2 \langle \bfl^*, \bfy - \bfy^*\rangle&\leq& \|\bfy - \bfy^* \|^2/\tau.
  \end{eqnarray} 
  In addition, For any point $(\bfx, \bfy)\in\F$, namely, $\bfy=A \bfx-{\bfb} $ and $\bfy\in S$, we have \beq\label{yyxx}
  \bfy-\bfy^*=A(\bfx-\bfx^*).\eeq
   Finally, the strongly convexity of $f$ allows us to derive that
  \begin{eqnarray*}
2f(\bfx) - 2f( \bfx^*) 
 &\geq &2\langle \nabla f( \bfx^*),\bfx-\bfx^*\rangle
 + M_f \|\bfx-\bfx^*\|^2\\
  &\overset{(\ref{eta-point})}{=} &-2\langle  A^\top \bfl^*,\bfx-\bfx^*\rangle+ M_f \|\bfx-\bfx^*\|^2\\
        &\overset{(\ref{yyxx})}{=}&-2\langle \bfl^*,  \bfy-\bfy^* \rangle+ M_f \|\bfx-\bfx^*\|^2\\
   &\overset{(\ref{eta-point-3-0})}{\geq} & - \|\bfy - \bfy^* \|^2/ \tau  + M_f\|\bfx-\bfx^*\|^2 \\
      &\overset{(\ref{yyxx})}{=}& - \|A(\bfx - \bfx^*) \|^2/\tau +M_f\|\bfx-\bfx^*\|^2 \\
 &\geq& ( M_f -\|A\|^2/\tau) \|\bfx-\bfx^*\|^2.
\end{eqnarray*}
 This shows the global optimality of $(\bfx^*,\bfy^*)$ if it is an  $\tau $-stationary point with $\tau \geq \|A\|^2/M_f$.
The whole proof is finished. \qed \end{proof}

It is worth mentioning that for the case of $\|(A\bfx^*-\bfb)_+\|_0=s$, a  $\tau $-stationary point with $\tau>0$ is a local minimizer if $f$ is convex. Moreover, it is able to be global minimizer if  $f$ is  further strong convex and $\tau\geq\|A\|^2/M_f.$ Based on  \cref{nec-suff-opt-con-KKT} and   \cref{nec-suff-opt-con-sta}, we have the following relationships among $\tau $-stationary points,  KKT points and local/global minimizers.
\begin{corollary}\label{cor-relation} The relationships among $\tau $-stationary points, KKT points and local/global minimizers
 are displayed in \cref{relation}. 
\begin{figure}[!th] 
  \centering\vspace{-3mm}
  \includegraphics[width=.8\textwidth]{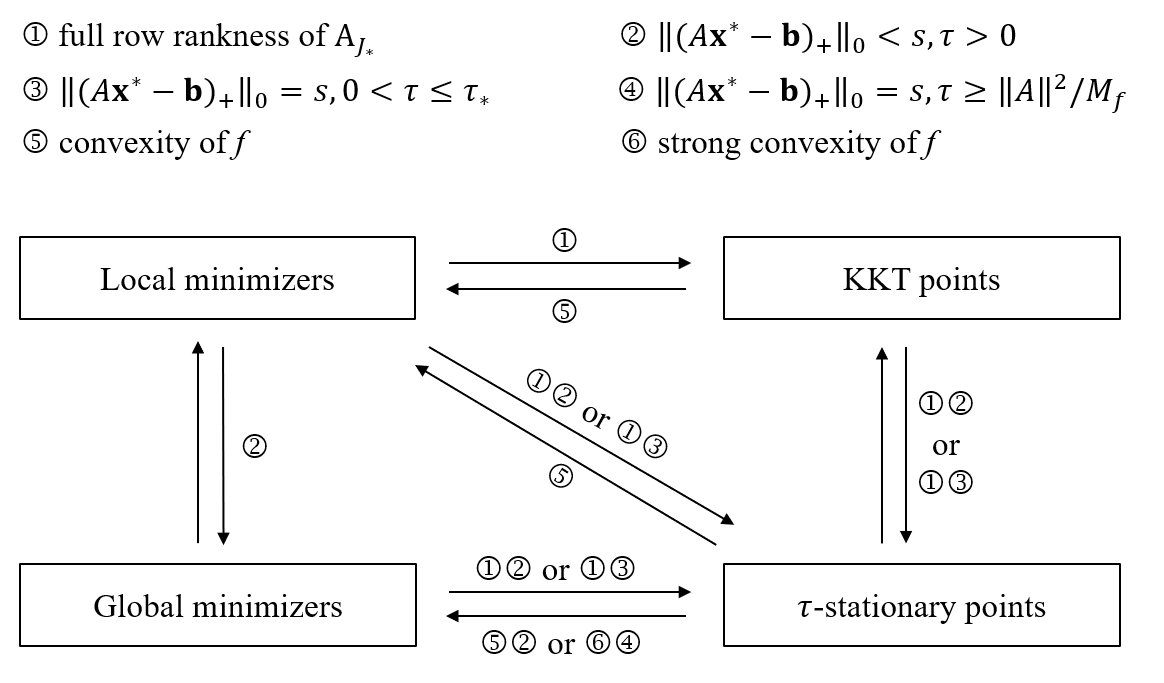}
  \caption{Relationships among four types of points.}\label{relation}
\end{figure}
\end{corollary}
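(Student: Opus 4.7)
The plan is to simply harvest the implications already proved in \cref{nec-suff-opt-con-KKT} (for KKT points) and \cref{nec-suff-opt-con-sta} (for $\tau$-stationary points), and to deduce the remaining $\tau$-stationary $\Leftrightarrow$ KKT arrows by composing these results. Since the corollary only asserts that the diagram in \cref{relation} holds, the "proof" is essentially the bookkeeping of six labeled arrows, each coming with its hypothesis on $f$, on the rank of $A_{J_*}$, on the size of $\tau$, and on whether $\|(A\bfx^*-\bfb)_+\|_0$ equals $s$ or is strictly less than $s$.

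First I would list the four vertical arrows connecting local/global minimizers with KKT points: the implication local minimizer $\Rightarrow$ KKT point under the full row rankness of $A_{J_*}$ is \cref{nec-suff-opt-con-KKT}(a), together with the strengthening that $\bfl^*=0$ and $\nabla f(\bfx^*)=0$ when $\|(A\bfx^*-\bfb)_+\|_0<s$; the converse directions (KKT $\Rightarrow$ local minimizer when $\|(A\bfx^*-\bfb)_+\|_0=s$ and KKT $\Rightarrow$ global minimizer when $\|(A\bfx^*-\bfb)_+\|_0<s$) come from \cref{nec-suff-opt-con-KKT}(b), under convexity of $f$. Next I would list the arrows involving $\tau$-stationary points against local/global minimizers: local minimizer $\Rightarrow$ $\tau$-stationary point for any $\tau>0$ in the case $\|(A\bfx^*-\bfb)_+\|_0<s$ and for $\tau\in(0,\tau_*]$ in the case $\|(A\bfx^*-\bfb)_+\|_0=s$, from \cref{nec-suff-opt-con-sta}(a); $\tau$-stationary point $\Rightarrow$ local minimizer (resp.\ global minimizer) under convexity from \cref{nec-suff-opt-con-sta}(b); and $\tau$-stationary point $\Rightarrow$ global minimizer under strong convexity with $\tau\geq \|A\|^2/M_f$ from \cref{nec-suff-opt-con-sta}(c).

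What remains are the horizontal arrows between $\tau$-stationary points and KKT points. For "$\tau$-stationary $\Rightarrow$ KKT" (no convexity needed) I would argue directly: the inclusion $\bfy^*\in \P_S(\bfy^*+\tau\bfl^*)$ from \eqref{eta-point} combined with \cref{pro-eta} gives \eqref{eta-point-1}, and comparing this with the description \eqref{NS-exp} of $\widehat N_S(\bfy^*)$ one reads off $\bfl^*\in\widehat N_S(\bfy^*)$; together with $\nabla f(\bfx^*)+A^\top\bfl^*=0$ and $\|\bfy^*_+\|_0\leq s$, this is precisely \eqref{KKT-point}. This argument is already essentially contained in the first sentence of the proof of \cref{nec-suff-opt-con-sta}(b), so I would just cite it. For the reverse "KKT $\Rightarrow$ $\tau$-stationary" direction, I would chain \cref{nec-suff-opt-con-KKT}(b) and \cref{nec-suff-opt-con-sta}(a): under convexity a KKT point is a local minimizer, which (given full row rankness of $A_{J_*}$) is then a $\tau$-stationary point in the appropriate range of $\tau$.

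The only subtlety, and the part that deserves a careful sentence, is labeling the hypotheses on each arrow correctly, since the conditions split according to whether $\|(A\bfx^*-\bfb)_+\|_0$ equals $s$ or is strictly less than $s$, and according to convexity versus strong convexity of $f$. No new technical obstacle should appear: all the heavy lifting has already been done in \cref{pro-eta}, \cref{normal-S}, \cref{nec-suff-opt-con-KKT} and \cref{nec-suff-opt-con-sta}, and the corollary simply packages their conclusions into the commutative diagram of \cref{relation}.
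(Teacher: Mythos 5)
Your proposal is correct and matches the paper's (implicit) argument: the paper gives no separate proof of this corollary beyond the remark that it follows from \cref{nec-suff-opt-con-KKT} and \cref{nec-suff-opt-con-sta}, and the bookkeeping of arrows you describe is exactly that assembly. One small refinement: for the ``KKT $\Rightarrow$ $\tau$-stationary'' arrow you need not route through convexity and local minimality, since the body of the proof of \cref{nec-suff-opt-con-sta}~a) derives $\tau$-stationarity directly from the KKT conditions \eqref{KKT-point} together with the full row rankness of $A_{J_*}$, so that arrow holds without any convexity assumption on $f$.
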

We would like to point out that  the first-order sufficient and necessary optimality conditions are established for the model \eqref{ell-0-1}, where the objective function $f$ is assumed to be continuously differentiable. In fact, those results can be extended for the model with $f$ being sub-differentiable, for example, $f(\bfx)=\|\bfx\|_1+\eta\|\bfx\|^2$ in the problem \eqref{1-bit-cs-l01}.

 \section{Newton Method}\label{sec:Newton}
This section casts a Newton-type algorithm that aims at solving \eqref{ell-0-1} to find a \ts.  Before embarking that, in the sequel, we assume the following assumptions on $f$ for a given \ts\ $\bfx^*$ of \eqref{ell-0-1}. 
\begin{assumption}\label{ass-1}  The function $f$ is twice continuously differentiable on $\R^n$,  $\nabla^2 f(\bfx^*)$ is positive definite  and $A_{J_*}$ is full row rank, where $J_*$  is given by (\ref{gamma*}).  
\end{assumption}
\begin{assumption}\label{ass-2} The Hessian matrix $\nabla^2 f(\cdot)$ is locally Lipschitz continuous around $\bfx^*$ with a constant $L_*>0$, namely, 
\beq\label{Hessian-Lip}\|\nabla^2 f(\bfx)-\nabla^2 f(\bfx')\|\leq L_* \|   \bfx -\bfx' \| \eeq
for any $\bfx,\bfx'$ in the neighbourhood of $\bfx^*$.
\end{assumption}
 \subsection{$\tau$-stationary equations}  
To employ the Newton method,   we first convert a \ts\ satisfying  \eqref{eta-point} to an equation system. 
 \begin{theorem}\label{sta-eq} A point $\bfx^*$  is a \ts\ with  $\tau>0$  of (\ref{ell-0-1}) if and only if there is a $\bfl^*\in\R^m$ such that $J_*\in \T_{\tau}(A\bfx^*-\bfb+\tau  \bfl^*;s)$ and
     \begin{eqnarray}\label{sta-eq-1}
  F(\bfw^*;J_*):=\left[  
     \begin{array}{c}
    \nabla f(\bfx^*)+ A^\top_{J_*}\bfl_{J_*}^*\\
     A_{J_*}\bfx^* -\bfb_{ J_*}\\ 
     \bfl^*_{\overline J_*}\\
     \end{array}
     \right] =0.
  \end{eqnarray}
  Furthermore, for the fixed ${J}_*$, the Jacobian of $F$ takes the following form
 \begin{eqnarray}\label{sta-eq-2}
 \nabla F(\bfw^*;{J}_*)=\left[  
     \begin{array}{ccc}
    \nabla^2 f(\bfx^*)& A^\top_{J_*}&0\\
     A_{J_*}&0&0\\
     0&0&I\\ 
     \end{array}
     \right],
  \end{eqnarray} 
  which is non-singular if Assumption \ref{ass-1} holds.
\end{theorem}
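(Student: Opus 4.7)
The plan is to prove the three assertions in the theorem in turn: the equivalence of $\tau$-stationarity with the augmented equation system, the explicit Jacobian formula, and its non-singularity.

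For the equivalence, I would set $\bfy^* := A\bfx^* - \bfb$ and note from \eqref{gamma*} that $J_* = \{i : y^*_i = 0\}$, so $\overline{J}_* = \supp(\bfy^*)$. Applying \cref{pro-eta} to the projection inclusion $\bfy^* \in \P_S(\bfy^* + \tau \bfl^*)$ yields $\|\bfy^*_+\|_0 \leq s$ together with $\bfl^*_{\overline{J}_*} = 0$ (the ``$\lambda_i = 0$ on $\supp(\bfy)$'' clause) and $0 \leq \tau\lambda^*_i \leq y^*_{[s]}$ for $i \in J_*$. Substituting $\bfl^*_{\overline{J}_*} = 0$ into $\nabla f(\bfx^*) + A^\top \bfl^* = 0$ collapses it to $\nabla f(\bfx^*) + A_{J_*}^\top \bfl^*_{J_*} = 0$, the first block of $F$; the middle block $A_{J_*}\bfx^* = \bfb_{J_*}$ is automatic from the definition of $J_*$; the last block is the equality $\bfl^*_{\overline{J}_*} = 0$. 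To check $J_* \in \T_\tau(\bfy^* + \tau\bfl^*; s)$, I would read off $\Gamma_+, \Gamma_0, \Gamma_-$ of $\bfy^* + \tau\bfl^*$ using $\bfl^*_{\overline{J}_*} = 0$ and $\bfl^*_{J_*} \geq 0$, and then verify the defining properties in \eqref{T-z} with the choice $\Gamma_s = \{i \in \overline{J}_* : y^*_i > 0\}$: the cardinality condition follows from $\|\bfy^*_+\|_0 \leq s$, and the ordering condition from $\tau\lambda^*_i \leq y^*_{[s]}$. The converse reverses these steps: given $F(\bfw^*; J_*) = 0$ with $J_* \in \T_\tau(\bfy^* + \tau\bfl^*; s)$, \eqref{T-z} forces the bound $\tau\lambda^*_i \leq y^*_{[s]}$, and \cref{pro-eta} (the ``if'' direction) then delivers the projection inclusion, while the gradient equation is restored by combining the first and third blocks of $F$.

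For the Jacobian, I organize $\bfw$ as $(\bfx; \bfl_{J_*}; \bfl_{\overline{J}_*})$ and differentiate each of the three block rows of $F$ with respect to each of these three block variables. The first row $\nabla f(\bfx) + A_{J_*}^\top \bfl_{J_*}$ yields $\nabla^2 f(\bfx^*)$, $A_{J_*}^\top$, $0$; the second row $A_{J_*}\bfx - \bfb_{J_*}$ yields $A_{J_*}$, $0$, $0$; and the third row $\bfl_{\overline{J}_*}$ yields $0$, $0$, $I$. Assembling these produces exactly the displayed block matrix.

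The non-singularity is the standard KKT-matrix argument. The identity block acting on $\bfl_{\overline{J}_*}$ is trivially invertible, so it reduces to showing that the upper-left saddle-point block
\[
\begin{pmatrix} \nabla^2 f(\bfx^*) & A_{J_*}^\top \\ A_{J_*} & 0 \end{pmatrix}
\]
is non-singular. If $(\bfu, \bfv)$ lies in its kernel, then $A_{J_*}\bfu = 0$ and $\nabla^2 f(\bfx^*)\bfu = -A_{J_*}^\top \bfv$; taking the inner product with $\bfu$ gives $\bfu^\top \nabla^2 f(\bfx^*)\bfu = 0$, so $\bfu = 0$ by positive definiteness (\cref{ass-1}), whence $A_{J_*}^\top \bfv = 0$ forces $\bfv = 0$ by full row rank of $A_{J_*}$ (also \cref{ass-1}). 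The only subtlety I anticipate is bookkeeping — making the identification of $J_*$ with a valid element of $\T_\tau(\bfy^* + \tau\bfl^*; s)$ airtight, particularly in the case $\|\bfy^*_+\|_0 < s$ where $y^*_{[s]} = 0$ forces $\bfl^*_{J_*} = 0$ — rather than any genuine analytic difficulty.
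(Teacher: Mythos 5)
Your proposal is correct and follows essentially the same route as the paper: both directions of the equivalence rest on the projection characterizations of Section 2 (\cref{pro-1}/\cref{pro-eta}) together with the identification of $J_*$ as the index set singled out by the projection, and your case analysis (including $\|\bfy^*_+\|_0<s$ forcing $\bfl^*_{J_*}=0$) matches the paper's. The only cosmetic differences are that you enter the necessity direction through \cref{pro-eta} and verify $J_*\in \T_{\tau}(\bfz^*;s)$ by exhibiting the witness $\Gamma_s$ explicitly, where the paper instead extracts a $T_*\in\T_{\tau}(\bfz^*;s)$ from \cref{pro-1} and proves $T_*=J_*$ by a short contradiction, and that you write out the standard saddle-point kernel argument for non-singularity, which the paper dismisses as obvious under Assumption \ref{ass-1}.
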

\begin{proof} The second claim can be made by Assumption \ref{ass-1} obviously. We only prove the first conclusion. Let $\Gamma_s^*$ and $\Gamma_0^*$ be defined as \eqref{notation-z} in which $\bfz$ is replaced by $\bfz^*:=A\bfx^*-\bfb+\tau  \bfl^*$. 
Necessity. By \eqref{eta-point}, there is a $\bfl^*\in\R^m$ satisfying
\begin{eqnarray*}A\bfx^*-\bfb&\in&\P_S(A\bfx^*-\bfb+\tau  \bfl^*)=\P_S( \bfz^*)\\
&\overset{\eqref{psz}}{=}&\{(0;\bfz_{\overline T}^*):~T\in\T_{\tau}(\bfz^*;s)\}\\
&=& \left\{\left(0;(A\bfx^*-\bfb)_{\overline T}+\tau \bfl_{\overline T}^* \right):~T\in\T_{\tau}(\bfz^*;s)\right\}, 
\end{eqnarray*}
 which indicates that there is  a $T_*\in \T_{\tau}(\bfz^*;s)$ such that $A\bfx^*-\bfb=(0;(A\bfx^*-\bfb)_{ \overline   T_*}+\tau \bfl_{\overline  T_*}^*)$, sufficing to $A_{T_*}\bfx^* -\bfb_{T_*}=0$ and $\bfl^*_{\overline T_*}=0$ and hence $\nabla f(\bfx^*)+ A^\top_{T_*}\bfl_{T_*}^*=0$. Now we claim that $T_*=J_*$. Obviously, the definition of $J_*$ in (\ref{gamma*}) indicates $T_*\subseteq J_*$. Suppose there is a $j\in J_*$ but $j\notin T_*$. Then we get $A_{j}\bfx^* -b_{j}=0$ by (\ref{gamma*}) and $\lambda^*_j=0$  by $j \in \overline T_*$.  These show $z_j^*=A_{j}\bfx^* -b_{j}+\tau \lambda^*_j=0$ and hence $j\in \Gamma_0^*\subseteq T_*$, a contradiction. So $T_*=J_*$. \\
Sufficiency.  $\nabla f(\bfx^*)+ A^\top\bfl^*=0$ due to the first and third equations in \eqref{sta-eq-1}. The last two equations in \eqref{sta-eq-1} indicate  that 
\begin{eqnarray*} A\bfx^*-\bfb&=&(0;(A\bfx^*-\bfb)_{\overline J_*}),\\
\bfz^*=A\bfx^*-\bfb+\tau  \bfl^*&=&(\tau\bfl_{J_*}^*; (A\bfx^*-\bfb)_{\overline J_*}).\end{eqnarray*} 
These and  the definition of $J_*=(\Gamma^*_+\setminus\Gamma^*_{s})\cup \Gamma^*_0\in \T_{\tau}(\bfz^*;s)$ from \eqref{T-z} derive $$\forall i\in\Gamma_s^*\subseteq \overline J_*,~(A\bfx^*-\bfb)_{i}= z_i^*\geq  z_j^*=\tau\lambda_{j}^*\geq0,\forall j\in J_*,$$ which shows $A\bfx^*-\bfb\in\P_S(\bfz^*)$ immediately by  \cref{pro-eta}. Namely, $\bfx^*$  is a \ts\ of the problem \eqref{ell-0-1}.   \qed  
\end{proof}

\subsection{Algorithmic design}
For notational simplicity, hereafter, we always let
  \begin{eqnarray}
\label{ghwz-k}\bfw^k&:=&(\bfx^k;\bfl^k), ~~~~ \bfz^k:= A \bfx^k -\bfb + \tau\bfl^k,
  \end{eqnarray}
  if no extra explanations are given. These allow  us to borrow the notation in \eqref{notation-z} and \eqref{T-z}. In addition, we rewrite $\T(\bfz^k;s)$ in \eqref{T-z} as 
    \begin{eqnarray}\label{T-tau-T}
 \T_{\tau}(\bfz^k;s):=  \T(\bfz^k;s),
  \end{eqnarray}
  since $\bfz^k$ is associated with the parameter $\tau$. 
Recall that  \eqref{notation-z}, for  $\bfz^k$ we have  $\Gamma$-related index sets: $\Gamma_+^k, \Gamma_s^k,\Gamma_0^k$ and $\Gamma_-^k$. Then  
\beq\label{tjt-k}\forall~T_k\in \T_{\tau}(\bfz^k;s),~~~ 
 T_k &\overset{\eqref{T-z}}{=}& (\Gamma_+^k\setminus\Gamma_s^k) \cup\Gamma_0^k,~~~\overline T_k= \Gamma_s^k\cup\Gamma_-^k.
  \eeq
To differ with the superscripts used in the $\Gamma$-related index sets, we make use of the subscript in $T_k$.   Now turn our attention to solve the equations \eqref{sta-eq-1}, in which, however, $J_*\in\T_{\tau}(\bfz^*;s)$ is unknown. Therefore, to proceed the Newton method, we need to find such a $J_*$, which will be adaptively updated by using the  approximation of $\bfw^*$. More precisely,  let $\bfw^k$ be the current point, we first select a $T_k\in\T_{\tau}(\bfz^k;s)$ and then find Newton direction $\bfd^k$ by solving the following linear equations:
 \beq \label{newton-dir-0}
 \nabla F(\bfw^k;T_k)~\bfd = - F(\bfw^k;T_k). 
 \eeq
Let $\bfd^k=(\bfu^k;\bfv^k)$ with $\bfu^k\in\R^n$ and $\bfv^k\in\R^{m}$. By \eqref{sta-eq-2}, the above equation yields
 \beq \label{newton-dir} 
 \left[  
     \begin{array}{ccc}
   \nabla^2 f(\bfx^k)& A^\top_{ T_k}&0\\
    A_{ T_k}&0&0\\
    0&0&I
     \end{array}
     \right] 
     \left[  
     \begin{array}{l}
    \bfu^k\\
    \bfv^k_{ T_k}\\
    \bfv^k_{\overline T_k}
     \end{array}
     \right]
     &=&-\left[  
     \begin{array}{c}
    \nabla f(\bfx^k) + A^\top_{ T_k}\bfl_{  T_k}^k\\
    A_{  T_k}\bfx^k-\bfb_{  T_k}\\
    \bfl_{\overline T_k}^k
     \end{array}
     \right].
 \eeq
 The framework of our proposed method is summarized in  \cref{Alg-NL01}.

\begin{algorithm} 
	\caption{\nhs: Newton method for Heaviside set constrained optimization}
	\begin{algorithmic}[1] \label{Alg-NL01}
		\STATE Initialize $\bfw^0=(\bfx^0;\bfl^0)$. Give the parameter $\tau>0$, the maximum number of iteration \texttt{maxIt} and the tolerance \texttt{tol}. Select $s\in\N_m$ and set $k :=0$.
		\STATE Pick $T_0\in\mathbb{T}_{\tau}(\bfz^0;s)$.
\IF{ $k\leq\texttt{maxIt}$ and $\|F(\bfw^k;T_k)\|>\texttt{tol}$}			
		\STATE Update $\bfd^k$ by solving \eqref{newton-dir}.
		\STATE Update $\bfw^{k+1}~=~\bfw^k+\bfd^k$.
		\STATE Update $T_{k+1}\in\mathbb{T}_{\tau}(\bfz^{k+1};s)$  and set $k := k+1$.   		
\ENDIF		
\RETURN $\bfw^{k}$.
	\end{algorithmic}
\end{algorithm}

\begin{remark}\label{rem:complexity}
With regard to  \cref{Alg-NL01}, we have some observations.
\begin{itemize}
\item[i)] One of the halting conditions makes use of $\|F(\bfw^k;T_k)\|$. The reason behind this is that if we find a $T_k\in\T_\tau(\bfz^k;s)$ satisfying $\|F(\bfw^k;T_k)\|=0$, then $\bfw^k$ is a \ts\ of the problem (\ref{ell-0-1}) by  \cref{sta-eq}.
\item[ii)] The major concern  is made on the existence of $\bfd^k$, namely the existence of solutions of the linear equations  (\ref{newton-dir}). Suppose $f$ is strongly convex, such as $f= \|D\bfx\|^2$ in (\ref{HM-SVM})  with $D_{nn}>0$. Then $\Theta_k:=\nabla^2 f(\bfx^k)$ is invertible.
In order to solve (\ref{newton-dir}), one could address 
\allowdisplaybreaks \beq \label{newton-dir-1}
 \left\{\begin{array}{rll}
 A_{ T_k}\Theta_k^{-1} A^\top_{ T_k} \bfv^k_{ T_k} &=& A_{  T_k}\bfx^k-\bfb_{ T_k} - A_{  T_k}\Theta_k^{-1}\Big[ \nabla f(\bfx^k) + A^\top_{  T_k}\bfl_{  T_k}^k\Big], \\
 \bfu^k&=& -\Theta_k^{-1} \Big[  \nabla f(\bfx^k) + A^\top_{  T_k}\bfl_{ T_k}^k+A^\top_{  T_k} \bfv^k_{  T_k}\Big],\\
\bfv^k_{\overline T_k}&=& -\bfl_{\overline T_k}^k.
 \end{array}
 \right. 
 \eeq
\item[iii)] When it comes to the computational complexity, if $\Theta_k$ is a diagonal matrix with diagonal elements being non-zeros (e.g., $f$ is the one  in   (\ref{HM-SVM-l01})  or (\ref{1-bit-cs-l01})), then the complexity of computing $A_{  T_k}\Theta_k^{-1}A^\top_{ T_k} $ is $\mathcal{O}(n|  T_k|^2)$ and tackling the first linear equation in (\ref{newton-dir-1}) needs complexity at most $\mathcal{O}(|  T_k|^3)$. Overall the total complexity of solving  (\ref{newton-dir-1}) is $\mathcal{O}(n|  T_k|^2+|  T_k|^3)$. However, for general Hessians $\Theta_k$, the computation might be expensive when $n$ is enormous. For such scenarios, a potential way is to adopt the conjugate gradient method to solve (\ref{newton-dir}).  To select $T_{k}\in\mathbb{T}_{\tau}(\bfz^{k};s)$, we  pick  the indices of the $|\Gamma_+^k|-s+|\Gamma_0^k|$ smallest non-negative entries of $\bfz^{k}$.  The complexity of the section of $  T_{k}$ is about $O(m+s\log s)$. 

\end{itemize} 
\end{remark}

\subsection{Locally quadratic convergence}
Loosely speaking, Newton method enjoys the locally quadratic convergence property if the starting point is sufficiently close to a stationary point under some standard assumptions, such as Assumptions \ref{ass-1} and \ref{ass-2}. In the sequel, we will show that our proposed method \nhs\ enjoys this property under those assumptions.  Before which, we  define some notation and constants. Given a $\tau$-stationary point $ \bfw^*=(\bfx^*;\bfl^*)$ of the problem \eqref{ell-0-1}, let $J_*$ be given by \eqref{gamma*} and
\beq\label{constants-y-H}
\bfy^*:=A\bfx^*-\bfb,~~~ \bfz^*: =  \bfy^*+\tau \bfl^*, ~~~H(J):=\left[  
     \begin{array}{ccc}
    \nabla^2 f(\bfx^*)& A^\top_{J}\\
     A_{J}&0
     \end{array}
     \right],  
\eeq
where $J \subseteq\N_m$. Based on which, we  denote some constants by
\beq\label{constants}
C_*&:=&  2\max\{\|H(J_*)\|,1\},\nonumber\\
c_*&:= &  2/ \min\Big\{ \min_{J\subseteq J_*}\sigma_{\min}(H(J)),1\Big\}, \\
\tau_*&:=& \begin{cases}
 {y^*_{[s]}}/\max_{i}\lambda_i^*,&~~{\rm if}~~ \bfl^* \neq 0,\\
 +\infty,&~~{\rm if}~~ \bfl^* = 0.
\end{cases} \nonumber 
\eeq
It is worth mentioning that the term $\min_{J\subseteq J_*}\sigma_{\min}(H(J))$ can be  derived explicitly by only using $\nabla^2 f(\bfx^*)$ and $A_{J_*}$. For simplicity, we keep such an expression. 
Under Assumptions \ref{ass-1} and \ref{ass-2}, those constants are all well defined. Now we  present the main convergence results in the following theorem.
\begin{theorem}[Locally quadratic convergence]\label{the:quadratic}  Let  $\bfw^*$ be a {\ts} of (\ref{ell-0-1}) with $0<\tau<\tau_*$, Assumptions \ref{ass-1} and \ref{ass-2} hold and $ \tau_*,c_*, C_*$ be given by (\ref{constants}).  Let $\{\bfw^k\}$ be the  sequence generated by   \cref{Alg-NL01}. There always exists a $\delta_*>0$   such that, if the initial point satisfies $\bfw^0\in U(\bfw^*,\min\{ {\delta_*},1/ ({c_* L_*})\})$, then the following results hold. 
\begin{itemize}
\item[a)] The sequence $\{\bfd^k\}_{k\geq0}$ is well defined and $\lim_{k\rightarrow\infty}\bfd^k=0$. 
\item[b)] The whole sequence $\{\bfw^k\}$ converges to $\bfw^*$ quadratically, namely,
\beq
\|\bfw^{k+1}-\bfw^{*}\|  &\leq& 
 0.5 c_* L_*  \|\bfw^{k}-\bfw^{*} \|^2. \nonumber 
\eeq
\end{itemize}
\end{theorem}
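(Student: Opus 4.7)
The strategy is to mimic the classical proof of local quadratic convergence for Newton's method on a smooth equation $F(\,\cdot\,;T)=0$, with the crucial twist that the index set $T_k$ driving the definition of $F$ changes from iteration to iteration. The key intermediate claim is an \emph{active-set identification}: for $\bfw^k$ sufficiently close to $\bfw^*$, every admissible choice $T_k\in\T_\tau(\bfz^k;s)$ satisfies $T_k\subseteq J_*$ and, moreover, $F(\bfw^*;T_k)=0$. Once this is in hand, $F(\,\cdot\,;T_k)$ behaves as a smooth map of $\bfw$ whose Jacobian \eqref{sta-eq-2} is nonsingular at $\bfw^*$, and the standard integral-remainder estimate yields the quadratic bound.

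For the identification step, $\bfz^k$ depends continuously on $\bfw^k$, so I track each coordinate of $\bfz^k$ against $\bfz^*$. For $i\in\overline{J}_*$ with $y_i^*>0$, the value $z_i^*=y_i^*\ge y^*_{[s]}$ forces $z_i^k$ to remain among the top-$s$ positives, i.e.\ in $\Gamma_s^k\subseteq\overline{T}_k$; for $i\in\overline{J}_*$ with $y_i^*<0$, $z_i^k$ stays negative, so again $i\in\overline{T}_k$. For $i\in J_*$ with $\lambda_i^*>0$, the hypothesis $\tau<\tau_*$ gives $z_i^*=\tau\lambda_i^*\in(0,y^*_{[s]})$, so $z_i^k$ is a small positive number strictly less than the $s$th largest, placing $i\in\Gamma_+^k\setminus\Gamma_s^k\subseteq T_k$. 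The remaining case is $i\in J_*$ with $\lambda_i^*=0$: then $z_i^k$ is close to $0$ and may land in any of $\Gamma_+^k\setminus\Gamma_s^k$, $\Gamma_0^k$, $\Gamma_-^k$, so $i$ may or may not belong to $T_k$. Combining these four cases gives $T_k\subseteq J_*$ and, crucially, $J_*\setminus T_k$ consists only of indices with $\lambda_i^*=0$. Using $\bfl^*_{\overline{J}_*}=0$, $A_{J_*}\bfx^*=\bfb_{J_*}$, and $\nabla f(\bfx^*)+A^\top_{J_*}\bfl^*_{J_*}=0$, each of the three blocks of $F(\bfw^*;T_k)$ then vanishes.

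Once identification is secured, the rest is routine. For $T_k\subseteq J_*$, $A_{T_k}$ inherits full row rank from $A_{J_*}$, so together with positive definiteness of $\nabla^2 f(\bfx^*)$ the matrix $H(T_k)$ defined in \eqref{constants-y-H} is nonsingular, and the constant $c_*$ in \eqref{constants} is calibrated precisely so that $\|H(T_k)^{-1}\|\le c_*/2$ uniformly over $T_k\subseteq J_*$. Shrinking the neighborhood to absorb the perturbation $\nabla^2 f(\bfx^k)-\nabla^2 f(\bfx^*)$ (bounded via Assumption~\ref{ass-2}) keeps $\|[\nabla F(\bfw^k;T_k)]^{-1}\|\le c_*$. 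Since $F(\,\cdot\,;T_k)$ depends on $\bfw$ only through $\nabla f(\bfx)$ and linear terms, its Jacobian is $L_*$-Lipschitz in $\bfw$. Subtracting $F(\bfw^*;T_k)=0$ and applying Taylor's theorem with integral remainder yields
\[
\bfw^{k+1}-\bfw^*=[\nabla F(\bfw^k;T_k)]^{-1}\!\int_0^1\!\bigl[\nabla F(\bfw^k;T_k)-\nabla F(\bfw^*+t(\bfw^k-\bfw^*);T_k)\bigr](\bfw^k-\bfw^*)\,dt,
\]
so that $\|\bfw^{k+1}-\bfw^*\|\le \tfrac{c_*L_*}{2}\|\bfw^k-\bfw^*\|^2$. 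If the initial distance is below $1/(c_*L_*)$, the recurrence contracts, and induction keeps $\bfw^k$ in the identification neighborhood for all $k$, proving (b); part (a) then follows because $\bfd^k=\bfw^{k+1}-\bfw^k\to 0$ as both endpoints converge to $\bfw^*$.

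The main obstacle is the active-set identification, specifically the degenerate indices $i\in J_*$ with $\lambda_i^*=0$: there $z_i^*=0$, so perturbation cannot determine the sign of $z_i^k$, and the algorithm may legitimately omit such $i$ from $T_k$. The argument works only because the vanishing multiplier $\lambda_i^*=0$ forces the corresponding "missing" rows of $F(\bfw^*;T_k)$ to be zero anyway. The quantitative part is the simultaneous calibration of $\delta_*$ so that (i) each of the four coordinate-tracking inequalities holds uniformly, (ii) the Hessian perturbation is small enough to preserve the Jacobian bound, and (iii) the resulting quadratic contraction keeps subsequent iterates in the same neighborhood.
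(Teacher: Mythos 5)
Your proposal is correct and follows essentially the same route as the paper: the active-set identification step (every admissible $T_k$ near $\bfw^*$ satisfies $T_k\subseteq J_*$ with the discrepancy confined to degenerate indices where $y_i^*=\lambda_i^*=0$, hence $F(\bfw^*;T_k)=0$) is precisely the content of the paper's Lemma 7.1(a), your uniform Jacobian bound via submatrices of $H(J_*)$ and the Lipschitz perturbation is Lemma 7.1(b), and the integral-remainder Newton estimate with induction matches the paper's proof verbatim. The only cosmetic difference is that you track the four coordinate cases directly where the paper phrases the same facts as inclusions $\Gamma_-^*\subseteq\Gamma_-$, $\Gamma_+^*\subseteq\Gamma_+$, $\Gamma_s^*\subseteq\Gamma_s$ and $\Delta=T_*\setminus T\subseteq\Gamma_0^*$.
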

 We would like to emphasize that $\delta_*$  can be replaced by one of its upper bounds that is able be derived explicitly through the $\tau$-stationary point $\bfw^*$. For simplicity of the proof, we assume the existence of $\delta_*$.
\section{Numerical Experiments}\label{sec:numerical}

In this section, we will conduct extensive numerical experiments of  {\tt NHST}, a variant of {\tt NHS}, by using MATLAB (R2019a) on a laptop of  $32$GB memory and Inter(R) Core(TM) i9-9880H 2.3Ghz CPU, against a few  solvers for addressing the SVM  and 1-bit CS problems. 
\subsection{Tuning $s$}
In the model \eqref{ell-0-1}, $s$ is a given integer, while being unknown for many applications in general. Therefore, it is necessary to design a proper scheme to tuning $s$ adaptively. In fact, $s$ plays  two important roles:
\begin{itemize}
\item[i)] For starters, $s$ is suggested to be a small integer instead of a large one. Taking the SVM as an example, the constraint $\|( A\bfx +{\bf1})_+\|_0\leq s$ in \eqref{HM-SVM-l01} allows $s$ samples to be misclassified. Therefore, a big value of $s$ means that too many samples will be classified incorrectly, which is clearly not what we expect.
\item[ii)] In  Remark \ref{rem:complexity},   the complexity  depends on $| T_k|=|\Gamma_+^k|-s+|\Gamma_0^k|$ by \eqref{z0+-Gamma+} if $   T_k \neq \emptyset$.
Therefore, $s$  impacts the computational speed. 
In addition, \cref{sta-eq} states that $F(\bfw^k;T_k)$  is non-singular if $\nabla^2 f(\bfx^k)$ is non-singular and $A_{  T_k}$ is full row rank. Therefore, the smaller $|T_k|$, the higher possibility of $A_{ T_k}$ being full row rank. However, the small $|  T_k|$ suggests $s$ to be large, which contradicts with the requirement in i). 
\end{itemize}
To balance them, we apply a tuning strategy as follows: starting with a slightly bigger $s$ and gradually reducing it to an acceptable scale. 
 More precisely, we initialize an integer $s_{0}\in\N_m$, and then for $k\geq1$, update $s_{k+1}$ by 
 	\beq\label{update-sk}
	s_{k+1} =\min\left\lbrace \lceil \rho_1 s_{k} \rceil, \lceil \rho_2 |\Gamma_+^k|\rceil\right\rbrace,
	\eeq
 where $\rho_1,\rho_2\in(0,1)$. We now interpolate this updating rule into   \cref{Alg-NL01} and obtain  \cref{Alg-NL01-s} as below. 
\begin{algorithm} 
	\caption{\nhsa: \nhs\ with tuning $s$}
	\begin{algorithmic}[1] \label{Alg-NL01-s}
		\STATE Initialize $\bfw^0=(\bfx^0;\bfl^0)$. Give the maximum number of iteration \texttt{maxIt}, the tolerance \texttt{tol} and parameters $\tau, 1>\rho_i>0,i=0,1,2,3$. Set $k :=0$. 
 \STATE Pick $T_0\in\T_\tau(\bfz^0; s_{0})$ with $s_{0}=\lceil \rho_0 |\Gamma_+^0|\rceil$. 
\IF{($\|F(\bfw^k;T_k)\|\geq\texttt{tol}$ or $ s_k \geq \lceil \rho_3 m\rceil$) and ($k\leq\texttt{maxIt}$)} 
	\STATE Update $\bfd^k$ by solving \eqref{newton-dir}.
	\STATE Update $\bfw^{k+1}= \bfw^k+\bfd^k$.
	\STATE Update $s_{k+1}$ by \eqref{update-sk} and $T_{k+1}\in\T_\tau(\bfz^{k+1}; s_{k+1})$. Set $k := k+1$.
\ENDIF	
\RETURN $\bfw^{k}$.
	\end{algorithmic}
\end{algorithm}
 
\begin{remark}   \cref{Alg-NL01-s} can be reduced to  \cref{Alg-NL01}  if we set $s_k=s$ for any $k\geq0$. However, there are at least three advantages of the tuning rule in \nhsa.
\begin{itemize}
\item[i)]  Starting with  a slightly bigger value of $s_0$ could accelerate the calculation at the beginning of \nhsa\ since $| T_k|=|\Gamma_+^k|-s_k+|\Gamma_0^k|$  can be small. The larger $s_k$, the smaller $| T_k|$ and thus the higher possibility  of $A_{  T_k}$ being full row rank, which results in a nice singularity condition of $\nabla F(\bfw^k;T_k)$. So for the first few steps, the method behaves steadily and fast.
 \item[ii)]  On the other hand,  
numerical experiments have demonstrated that when the sequence starts to converge, most samples' signs can be recovered correctly, namely $(A\bfx^k-\bfb)_i< 0$. This means $\Gamma_-^k$   dominants the whole index set $\N_m$, which leads to the small value of $|T_k|$. Hence, reducing $s_k$ would not cause expensive computational costs. 
\item[iii)]  We add  $s_k\leq \lceil \rho_3 m\rceil$ as one of the halting conditions, where  $\rho_3$ is a small rate (e.g, $0.001$). This controls at most $\rho_3$ percentage of signs that might be mis-recovered. Numerically experiments demonstrate that such a stopping criterion makes \nhsa\ generate relatively accurate classifications or recovery.
\end{itemize}
\end{remark}

\subsection{Implementation}

Parameters and starting points are initialized as follows: $\texttt{maxIt}=1000$, \texttt{tol}$=10^{-6}\sqrt{n}$ and $\rho_i=0.5,i=0,1,2$ and $\rho_3 = 0.001$.  Note that \nhsa\ will stop with  $s\leq \lceil \rho_3 m\rceil$, which means the smaller $\rho_3$ is, the longer time it will take to meet such a halting condition. Despite that, our numerical experiments demonstrate that  \nhsa\ is able to run fast for some datasets on large scales. However, the choice of $\rho_3$ is flexible and would not impact on \nhsa\ significantly.  For parameter $\tau$, if we fix it, then it is suggested to be tuned  for different problems, which can be done manually or by the so-called cross validation. Apart from this, empirical experience shows that we  can update it iteratively so as to select proper one automatically by the method. We actually tested the method with both schemes: fixing  $\tau$ and updating $\tau$. The former delivered sightly better results if it was chosen properly, but needs to be chosen differently for different problems. Therefore, we employ the second scheme to unify the selection process. That is, let $\tau_0=0.5$ and update $\tau_{k+1}=\tau_k/1.1$ if $k$ is a multiple of 10 and  $\tau_{k+1}=\tau_k$ otherwise. Starting points $(\bfx^0;\bfl^0)$  are set as $\bfx^0=0, \bfl^0={\bf1}$ for the SVM problems and $\bfx^0=  \overline\bfx$, $\bfl^0=\bf1$ for the 1-bit CS problems by \cite{dai2016noisy},  where 
\beq\label{starting-x0}
 \overline\bfx:= \frac{A_0^\top \bfc}{\| A_0^\top \bfc\|},~~~~
 A_0:= [\bfa_1~\bfa_2~\cdots~\bfa_m]^\top.
 \eeq
\subsection{Simulations for SVM}
The model \eqref{HM-SVM-l01} has the objective function $f(\bfx)= \|D\bfx\|^2$ with $D_{ii}=1,i\in\N_{n-1}$ and $D_{nn}\geq0$.  Let  $\bfx^*$ be a $\tau$-stationary point with $(A\bfx^* -\bfb)_{J_*}=0$, where $J_*$ is given by \eqref{gamma*}. In SVM, this means $\langle\bfa_i,\bfx^*\rangle = \pm 1$ if the sample $i\in J_*$, namely, samples fall into the hyperplanes $\langle\bfa_i,\bfx^*\rangle = \pm 1$. Those samples (belonging to the so-called support vectors) usually take a very small portion of the  total samples, namely, $|J_*|\ll m$.  Therefore,   the full row rankness of $A_{J_*}$ turns out to be a mild assumption.  To guarantee the non-singularity of $\nabla F(\bfw^*;{J_*})$ by   \cref{sta-eq}, we need Assumption \ref{ass-1}, where the non-singularity $\nabla^2 f(\bfx^*)$ can be ensured by setting $D_{nn}>0$ (e.g. $10^{-4}$ in our numerical experiments), and the full row rankness of $A_{J_*}$, a mild assumption as  mentioned above.

\subsubsection{Testing examples} 
 We select 27 real datasets with more number of samples and less number of features (i.e.,  $m\leq n$)  from three popular libraries: libsvm\footnote{\url{https://www.csie.ntu.edu.tw/~cjlin/libsvmtools/datasets/}}, uci\footnote{\url{http://archive.ics.uci.edu/ml/datasets.php}} and kaggle\footnote{\url{https://www.kaggle.com/datasets}}. 


\begin{example}[Real data in higher dimensions]\label{ex:real-data} All datasets are feature-wisely scaled to $[-1,1]$ and all the classes unequal to $1$ are treated as $-1$.  Their details are presented in \cref{Table-svm-less-m-more-n}.   The number of samples in training and testing data is denoted by $m$ and $m_t$. The sparse status  of a dataset is also given, for instance, {\tt newb} is on a large scale but sparse.
\end{example}

\begin{table}[!th]
	\renewcommand{\arraystretch}{0.75}\addtolength{\tabcolsep}{2pt}
	\caption{Data sets with less training samples and more features, namely $m\leq n$.}\vspace{-3mm}
	\label{Table-svm-less-m-more-n}
	\begin{center}
		\begin{tabular}{lllrrrr}
			\hline
		&		&		&		&	Train	&	Test	&	Sparse	\\
	Data&Descriptions&	Source	&	$n$	&	$m$	&	$m_t$	&		\\\hline
\texttt{colc}&	colon-cancer	&	libsvm	&	2000 	&	62 	&	0	&	No	\\
\texttt{dbw1}&	\multirow{4}{3.5cm}{Dbworld e-mails}	&	uci	&	4702 	&	64 	&	0	&	Yes	\\
\texttt{dbw2}&		&	uci	&	3721 	&	64 	&	0	&	Yes	\\
\texttt{dbw3}&		&	uci	&	242 	&	64 	&	0	&	Yes	\\
\texttt{dbw4}&		&	uci	&	229 	&	64 	&	0	&	Yes	\\
\texttt{fabc}&	Farm ads binary classification	&	kaggle	&	54877 	&	4143 	&	0	&	Yes	\\
\texttt{lsvt}&	Lsvt voice rehabilitation	&	uci	&	310 	&	126 	&	0	&	No	\\
\texttt{newb}&	News20.binary	&	libsvm	&	1355191 	&	19996 	&	0	&	Yes	\\
\texttt{scad}&	Scadi	&	uci	&	205 	&	70 	&	0	&	Yes	\\
\texttt{set1}&	\multirow{11}{3.5cm}{Data for software engineering teamwork assessment in education setting}	&	uci	&	84 	&	64 	&	0	&	No	\\
\texttt{set2}&		&	uci	&	84 	&	74 	&	0	&	No	\\
\texttt{set3}&		&	uci	&	84 	&	74 	&	0	&	No	\\
\texttt{set4}&		&	uci	&	84 	&	63 	&	0	&	No	\\
\texttt{set5}&		&	uci	&	84 	&	74 	&	0	&	No	\\
\texttt{set6}&		&	uci	&	84 	&	74 	&	0	&	No	\\
\texttt{set7}&		&	uci	&	84 	&	74 	&	0	&	No	\\
\texttt{set8}&		&	uci	&	84 	&	74 	&	0	&	No	\\
\texttt{set9}&		&	uci	&	84 	&	74 	&	0	&	No	\\
\texttt{set10}&		&	uci	&	84 	&	74 	&	0	&	No	\\
\texttt{set11}&		&	uci	&	84 	&	74 	&	0	&	No	\\\hline
\texttt{arce}&	Arcene	&	uci	&	10000 	&	100 	&	100	&	No	\\
\texttt{dext}&	Dexter	&	uci	&	19999 	&	300 	&	300	&	Yes	\\
\texttt{dmea}&	Detect malacious executable 	&	uci	&	531 	&	373 	&	1	&	Yes	\\
\texttt{doro}&	Dorothea	&	uci	&	100000 	&	800 	&	350	&	Yes	\\
\texttt{dubc}&	Duke breast-cancer	&	libsvm	&	7129 	&	38 	&	4	&	No	\\
\texttt{leuk}&	Leukemia	&	libsvm	&	7129 	&	38 	&	34	&	No	\\
\texttt{rcvb}&	Rcv1.binary	&	libsvm	&	47236 	&	20242 	&	20000	&	Yes	\\

			\hline
		\end{tabular}
	\end{center}
\end{table}

 To compare the performance of all methods selected in the sequel, let $\bfx$ be the solution/classifier generated by one method  and  $
 A_0$  given in \eqref{starting-x0}. We report the {\tt CPU time} and the classification accuracy defined by
\beq\label{acc}{\tt Acc}:=\left[1-  \frac{\|{\rm sgn}( A_0\bfx)-\bfc\|_0}{m}\right]\times 100\%.\eeq
We denote  {\tt Acc} the training accuracy  if   $( A_0,\bfc)$ is the training data and {\tt TAcc} the testing accuracy if $( A_0,\bfc)$ is the testing data, where $m$ is replaced by $m_t$ in \eqref{acc}.
\subsubsection{Benchmark methods} There is a vast body of work on developing methods to tackle the SVM problems.  We select a Matlab built-in solver {\tt fitclinear} and four leading ones from the machine learning community. These methods aim at solving the regularization model \eqref{SM-SVM} with different soft-margin loss functions $\ell$.   They are: {\tt HSVM} 
from the library libsvm \cite{chang2011libsvm}, where $\ell$ is the hinge loss; 
{\tt SSVM} \cite{SV1999} implemented by liblssvm 
  \cite{pelckmans2002matlab}, where $\ell$ is the squared hinge loss; 
 {\tt RSVM} \cite{wu2007robust}, where $\ell$ is the ramp loss; 
 {\tt LSVM}  from the library liblinear 
\cite{fan2008liblinear}, where $\ell$ is $\ell_2$-regularized $\ell_2$-loss. We set the parameter ${\tt - s~2}$ for {\tt LSVM} so that primal model is solved; 
  {\tt FSVM}, an abbreviation for the solver  {\tt fitclinear},
 where $\ell$ is the hinge loss. 
The first three methods are kennel-based and we choose the liner kennel for all of them.  Other involved parameters of these five methods are set as defaults.  

\subsubsection{Comparisons for  \cref{ex:real-data}} Results of six methods are reported in   \cref{table:ex2-less-m-bigger-n}, where $``--"$ denotes the results are omitted if a solver consumes too much time or requires memory that is out of the capacity of our desktop. In general, {\tt NHST} provides the best classification accuracy for all training datasets since its {\tt Acc} is the highest, e.g., {\tt Acc}$=100\%$ for datasets {\tt set1} - {\tt set11}. As for the computational speed, despite that {\tt NHST} does not run the fastest,  as a second-order method, it is considerably competitive with the other solvers. It is worth mentioning that {\tt LSVM}  and  {\tt FSVM}  are naturally expected to run very fast since they are programmed by C language.

\begin{sidewaystable} 
\centering
  \caption{Results of six solvers for \cref{ex:real-data}.}
  \label{table:ex2-less-m-bigger-n} 		
\renewcommand{\arraystretch}{0.85}\addtolength{\tabcolsep}{-3.5pt}	
		\begin{tabular}{l|cccccc|ccccccc|ccccccc}\hline
data &\multicolumn{6}{ c|}{ {\tt Acc}$\%$}&&
			\multicolumn{6}{ c| }{ {\tt Time (seconds)}}&&
			\multicolumn{6}{c}{ {\tt TAcc}$\%$}
			\\\cline{2-7} \cline{9-14} \cline{16-21}
&	{\tt FSVM}	&	{\tt HSVM}	&	{\tt LSVM}	&	{\tt RSVM}	&	{\tt SSVM}	&	{\tt NHST}	&	
&	{\tt FSVM}	&	{\tt HSVM}	&	{\tt LSVM}	&	{\tt RSVM}	&	{\tt SSVM}	&	{\tt NHST}	&	
&	{\tt FSVM}	&	{\tt HSVM}	&	{\tt LSVM}	&	{\tt RSVM}	&	{\tt SSVM}	&	{\tt NHST}	\\ \cline{2-7} \cline{9-14} \cline{16-21}
{\tt	 colc 	}&	95.16	&	100.0 	&	100.0 	&	100.0 	&	100.0 	&	100.0 	&	&	0.207 	&	0.018 	&	0.015 	&	0.493 	&	0.864 	&	0.027 	&	&	$--$	&	$--$	&	$--$	&	$--$	&	$--$	&	$--$	\\
{\tt	 dbw1 	}&	98.44	&	98.44	&	98.44	&	98.44	&	98.44	&	98.44 	&	&	0.043 	&	0.011 	&	0.002 	&	0.061 	&	7.710 	&	0.024 	&	&	$--$	&	$--$	&	$--$	&	$--$	&	$--$	&	$--$	\\
{\tt	 dbw2 	}&	98.44	&	98.44	&	98.44	&	98.44	&	98.44	&	98.44 	&	&	0.038 	&	0.010 	&	0.002 	&	0.069 	&	4.790 	&	0.012 	&	&	$--$	&	$--$	&	$--$	&	$--$	&	$--$	&	$--$	\\
{\tt	 dbw3 	}&	98.44	&	98.44	&	100.0 	&	95.31	&	100.0 	&	100.0 	&	&	0.022 	&	0.001 	&	0.000 	&	0.035 	&	0.115 	&	0.003 	&	&	$--$	&	$--$	&	$--$	&	$--$	&	$--$	&	$--$	\\
{\tt	 dbw4 	}&	98.44	&	98.44	&	100.0 	&	93.75	&	100.0 	&	100.0 	&	&	0.094 	&	0.001 	&	0.001 	&	0.027 	&	0.124 	&	0.006 	&	&	$--$	&	$--$	&	$--$	&	$--$	&	$--$	&	$--$	\\
{\tt	 fabc 	}&	99.61	&	99.86	&	99.88	&	99.44	&	$--$	&	99.88 	&	&	0.040 	&	8.430 	&	0.194 	&	96.10 	&	$--$	&	3.782 	&	&	$--$	&	$--$	&	$--$	&	$--$	&	$--$	&	$--$	\\
{\tt	 lsvt 	}&	95.24	&	98.41	&	100.0 	&	87.3	&	100.0 	&	100.0 	&	&	0.030 	&	0.007 	&	0.005 	&	0.045 	&	0.141 	&	0.019 	&	&	$--$	&	$--$	&	$--$	&	$--$	&	$--$	&	$--$	\\
{\tt	 newb 	}&	99.54	&	$--$	&	99.87	&	$--$	&	$--$	&	99.88 	&	&	0.481 	&	$--$	&	2.030 	&	$--$	&	$--$	&	5.240 	&	&	$--$	&	$--$	&	$--$	&	$--$	&	$--$	&	$--$	\\
{\tt	 scad 	}&	98.57	&	100.0 	&	100.0 	&	97.14	&	100.0 	&	100.0 	&	&	0.011 	&	0.001 	&	0.000 	&	0.015 	&	0.097 	&	0.003 	&	&	$--$	&	$--$	&	$--$	&	$--$	&	$--$	&	$--$	\\
{\tt	 set1 	}&	81.25	&	81.25	&	87.5	&	68.75	&	92.19	&	100.0 	&	&	0.017 	&	0.001 	&	0.001 	&	0.022 	&	0.039 	&	0.008 	&	&	$--$	&	$--$	&	$--$	&	$--$	&	$--$	&	$--$	\\
{\tt	 set2 	}&	87.84	&	87.84	&	90.54	&	71.62	&	93.24	&	100.0 	&	&	0.010 	&	0.001 	&	0.001 	&	0.011 	&	0.041 	&	0.001 	&	&	$--$	&	$--$	&	$--$	&	$--$	&	$--$	&	$--$	\\
{\tt	 set3 	}&	85.14	&	85.14	&	90.54	&	68.92	&	100.0 	&	100.0 	&	&	0.012 	&	0.001 	&	0.001 	&	0.013 	&	0.039 	&	0.002 	&	&	$--$	&	$--$	&	$--$	&	$--$	&	$--$	&	$--$	\\
{\tt	 set4 	}&	80.95	&	80.95	&	90.48	&	71.43	&	98.41	&	100.0 	&	&	0.011 	&	0.001 	&	0.000 	&	0.019 	&	0.038 	&	0.002 	&	&	$--$	&	$--$	&	$--$	&	$--$	&	$--$	&	$--$	\\
{\tt	 set5 	}&	81.08	&	81.08	&	81.08	&	33.78	&	89.19	&	100.0 	&	&	0.013 	&	0.001 	&	0.001 	&	0.012 	&	0.039 	&	0.001 	&	&	$--$	&	$--$	&	$--$	&	$--$	&	$--$	&	$--$	\\
{\tt	 set6 	}&	86.49	&	86.49	&	93.24	&	70.27	&	97.3	&	100.0 	&	&	0.011 	&	0.001 	&	0.001 	&	0.011 	&	0.038 	&	0.001 	&	&	$--$	&	$--$	&	$--$	&	$--$	&	$--$	&	$--$	\\
{\tt	 set7 	}&	90.54	&	90.54	&	91.89	&	70.27	&	100.0 	&	100.0 	&	&	0.013 	&	0.001 	&	0.000 	&	0.013 	&	0.038 	&	0.001 	&	&	$--$	&	$--$	&	$--$	&	$--$	&	$--$	&	$--$	\\
{\tt	 set8 	}&	83.78	&	83.78	&	90.54	&	67.57	&	98.65	&	100.0 	&	&	0.010 	&	0.001 	&	0.001 	&	0.017 	&	0.038 	&	0.001 	&	&	$--$	&	$--$	&	$--$	&	$--$	&	$--$	&	$--$	\\
{\tt	 set9 	}&	83.78	&	83.78	&	93.24	&	67.57	&	98.65	&	100.0 	&	&	0.013 	&	0.001 	&	0.001 	&	0.014 	&	0.038 	&	0.001 	&	&	$--$	&	$--$	&	$--$	&	$--$	&	$--$	&	$--$	\\
{\tt	 set10 	}&	79.73	&	79.73	&	89.19	&	72.97	&	98.65	&	100.0 	&	&	0.011 	&	0.001 	&	0.001 	&	0.011 	&	0.038 	&	0.002 	&	&	$--$	&	$--$	&	$--$	&	$--$	&	$--$	&	$--$	\\
{\tt	 set11 	}&	83.78	&	83.78	&	90.54	&	66.22	&	95.95	&	100.0 	&	&	0.011 	&	0.001 	&	0.001 	&	0.029 	&	0.038 	&	0.001 	&	&	$--$	&	$--$	&	$--$	&	$--$	&	$--$	&	$--$	\\
{\tt	 arce 	}&	100.0 	&	100.0 	&	100.0 	&	100.0 	&	100.0 	&	100.0 	&	&	0.040 	&	0.965 	&	0.077 	&	0.017 	&	8.390 	&	0.044 	&	&	81.00 	&	86.00 	&	87.00 	&	86.00 	&	83.00 	&	86.00 	\\
{\tt	 dext 	}&	100.0 	&	100.0 	&	100.0 	&	100.0 	&	100.0 	&	100.0 	&	&	0.008 	&	0.167 	&	0.009 	&	0.055 	&	81.60 	&	0.023 	&	&	92.33 	&	92.33 	&	92.67 	&	92.33 	&	86.33 	&	92.33 	\\
{\tt	 dmea 	}&	100.0 	&	100.0 	&	100.0 	&	98.39	&	100.0 	&	100.0 	&	&	0.008 	&	0.010 	&	0.003 	&	0.518 	&	1.100 	&	0.030 	&	&	100.0 	&	100.0 	&	100.0 	&	100.0 	&	100.0 	&	100.0 	\\
{\tt	 doro 	}&	100.0 	&	100.0 	&	100.0 	&	100.0 	&	$--$	&	100.0 	&	&	0.026 	&	8.450 	&	0.078 	&	0.564 	&	$--$	&	0.095 	&	&	93.14 	&	93.14 	&	92.86 	&	93.14 	&	$--$	&	93.14 	\\
{\tt	 dubc 	}&	100.0 	&	100.0 	&	100.0 	&	100.0 	&	100.0 	&	100.0 	&	&	0.010 	&	0.035 	&	0.019 	&	0.007 	&	5.970 	&	0.010 	&	&	100.0 	&	75.00 	&	100.0 	&	75.00 	&	75.00 	&	100.0 	\\
{\tt	 leuk 	}&	100.0 	&	100.0 	&	100.0 	&	100.0 	&	100.0 	&	100.0 	&	&	0.010 	&	0.044 	&	0.022 	&	0.006 	&	5.990 	&	0.010 	&	&	76.47 	&	82.35 	&	73.53 	&	82.35 	&	85.29 	&	85.29 	\\
{\tt	 rcvb 	}&	98.97	&	98.96	&	99.72	&	$--$	&	$--$	&	99.86 	&	&	0.092 	&	180.0 	&	0.256 	&	$--$	&	$--$	&	0.770 	&	&	96.37 	&	96.37 	&	96.38 	&	$--$	&	$--$	&	95.49 	\\
\hline 
\end{tabular}
\end{sidewaystable}

\subsection{Simulations for 1-bit CS}

The model \eqref{1-bit-cs-l01} has the objective function $\phi(\bfx)+\eta\|\bfx\|^2$, where $\eta\geq0$. Note that some $\phi(\bfx)$ are not twice continuously differentiable, such as the $\ell_1$ norm or the log penalty. Hence, we will solve the problem with the smoothing $\ell_q$ norm, i.e., $\phi(\bfx)=\sum_{i=1}^n (x_i^2+\varepsilon)^{q/2}$ with fixing $q=0.9$ and $\varepsilon=1/n$ for simplicity.
Other associated parameters in \eqref{1-bit-cs-l01}  are set as $\epsilon=0.001$  and $\eta=0.07$.  

\subsubsection{Testing examples}
\begin{example}[Independent covariance \cite{yan2012robust, dai2016noisy}]\label{ex:cs-ind} Entries of $A_0$ and the nonzero entries of the ground truth $k_*$-sparse vector $\bfx^*\in\R^n$ (i.e., $\|\bfx^*\|_0\leq k_*$)  are generated from the independent and identically distributed (i.i.d.) samples of the standard Gaussian distribution $\mathcal{N}(0,1)$. Then  $\bfx^*$ is normalized to be a unit vector. Let $\bfc^*={\rm sgn}(A_0\bfx^*)$ and $\tilde\bfc ={\rm sgn}(A_0\bfx^*+ \xi)$, where entries of the noise $\xi$  are the i.i.d. samples of $\mathcal{N}(0,0.1^2)$. Finally,  we randomly select $\lceil r m\rceil$ entries in $\tilde\bfc $ and flip their signs, and the flipped vector is denoted by  $\bfc$, where  $r$  is the flipping ratio.
\end{example}

\begin{example}[Correlated covariance \cite{huang2018robust}]\label{ex:cs-cor} Rows of $ A_0$ are generated from i.i.d. samples  of $\mathcal{N}(0,\Sigma)$ with $\Sigma_{ij}=2^{-|i-j|}, i,j\in\N_n$. Then $\bfx^*, \bfc^*$ and $\bfc$ are generated the same as in \cref{ex:cs-ind}.
\end{example}
To compare the performance of all methods selected in the sequel, let $\bfx$ be the solution generated by one method. We report  the  CPU {\tt time}, the signal-to-noise ratio ({\tt SNR}$:=  -20{\log}_{10}\| \bfx -\bfx^*\|$), the hamming error ({\tt HE}$:=   \|{\rm sgn}( A_0\bfx)-\bfc^*\|_0/m$) and the hamming distance ({\tt HD}$:= \|{\rm sgn}( A_0\bfx)-\bfc\|_0/m$), defined  as follows, where the larger {\tt SNR} (or the smaller  {\tt HE} or {\tt HD}) means the better recovery.

\subsubsection{Benchmark methods}  Four state-of-the-art solvers are selected for comparisons with our method {\tt NHST}. They are {\tt PDASC}
   \cite{huang2018robust}, {\tt BIHT} 
  \cite{jacques2013robust}, {\tt AOP} 
   \cite{yan2012robust}  and {\tt PBAOP} 
  \cite{huang2018pinball}. The last three ones are required to specify the true sparsity level $k_*$. In addition, the last two solvers also need the flipping ratio, denoted by $\tilde r$. As stated by \cite{yan2012robust}, there are three options. To make the comparisons more fairly, we choose the first one,  namely, setting $\tilde r=\|{\rm sgn}( A_0\bfx)-c\|_0/m$, where $\bfx$ is the solution generated by {\tt BIHT}. The rest of parameters for each method  are chosen as defaults. Finally, all methods start with the initial point $\bfx^0= \overline \bfx$  given in \eqref{starting-x0}, and their final solutions are normalized to have a unit length.

\subsubsection{Comparisons for  \cref{ex:cs-ind} and \cref{ex:cs-cor}} 
We now apply the five methods into solving  two examples under different scenarios. For each scenario, we report average results over $500$ instances if $n\leq1000$ and $20$ instances otherwise, which are summarized as below.

\begin{itemize}
\item[i)] \textit{Effect to $k_*$} from $\{2,3,\cdots,10\}$ with fixing $n=256, m=64$ and  $r=0.05$. As shown in \cref{fig:ex2-cs-ind-s},  it can be clearly seen that {\tt NHST} gets the smallest {\tt HD} and {\tt HE} under each $k_*\leq9$ for both examples. This is because the solution obtained by {\tt NHST} satisfying the constraint in \eqref{ell-0-1}, which means only a tiny portion of samples allowing for having wrong signs. Consequently, {\tt HD} is expected to be relatively small. Moreover, {\tt NHST} delivers the highest {\tt SNR} for all cases. 
   \begin{figure}[H] 
\centering 
\begin{subfigure}{.325\textwidth}
	\centering
	\includegraphics[width=.97\linewidth]{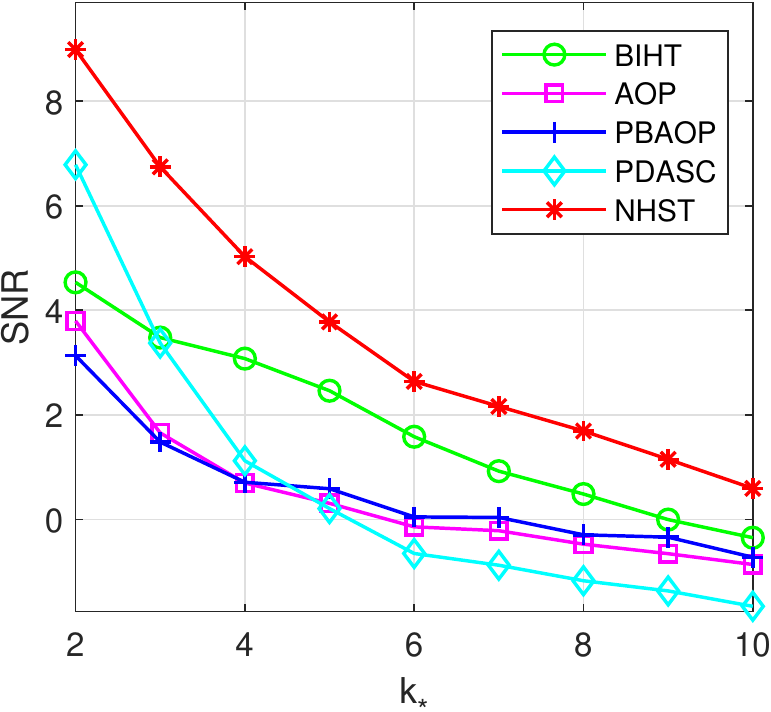}
\end{subfigure}
\begin{subfigure}{.325\textwidth}
	\centering
	\includegraphics[width=1\linewidth]{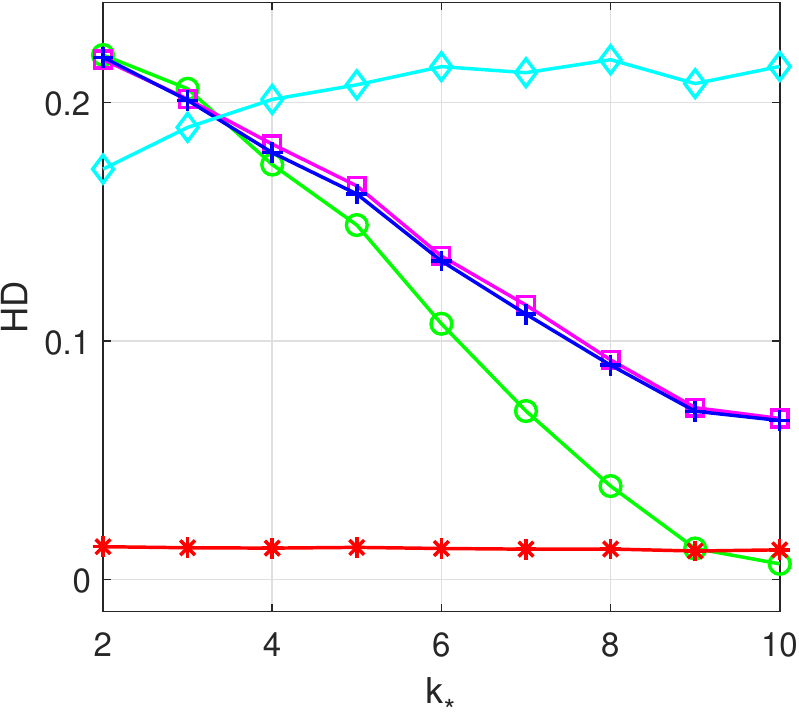}
\end{subfigure}
\begin{subfigure}{.325\textwidth}
	\centering
	\includegraphics[width=1\linewidth]{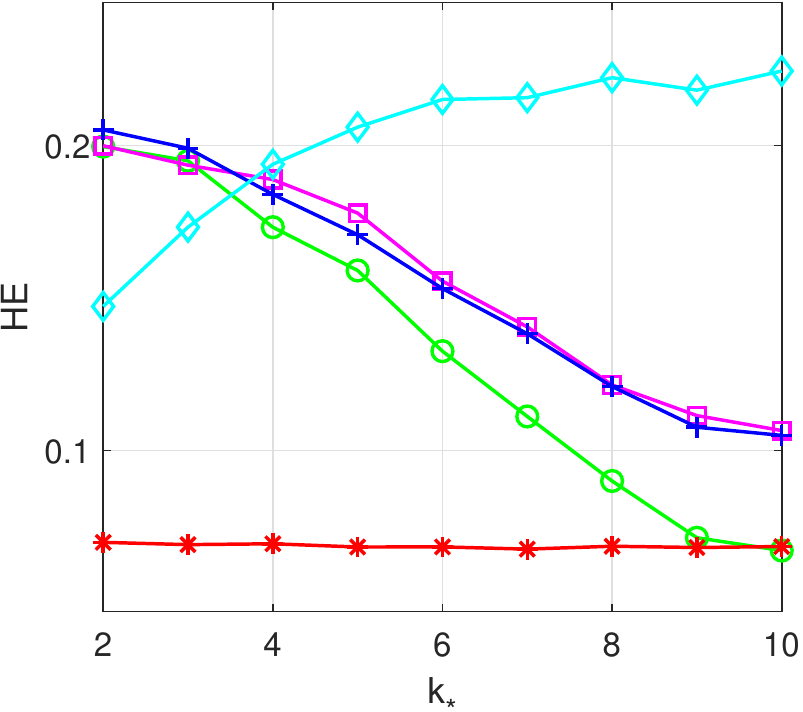}
\end{subfigure}
\caption{Effect to $k_*$ \Cref{ex:cs-cor}.}
\label{fig:ex2-cs-ind-s} 
\end{figure}
\item[ii)] \textit{Effect to $m/n$} from $\{0.1,0.2,\cdots,0.7\}$ with fixing $n=256, k_*=\lceil0.01n\rceil$ and  $r=0.05$. As shown in \cref{fig:ex2-cs-ind-r}, again, {\tt NHST} gets the smallest {\tt HD} and {\tt HE} for each $k_*$.  In terms of {\tt SNR}, {\tt NHST} outperforms others when $m/n<0.4$ and {\tt PDASC} behaves outstandingly for the bigger $m/n$. Similar observations can be seen for \cref{ex:cs-ind} and were omitted.

 \begin{figure}[H]  
\centering  
\begin{subfigure}{.32\textwidth}
	\centering
	\includegraphics[width=1\linewidth]{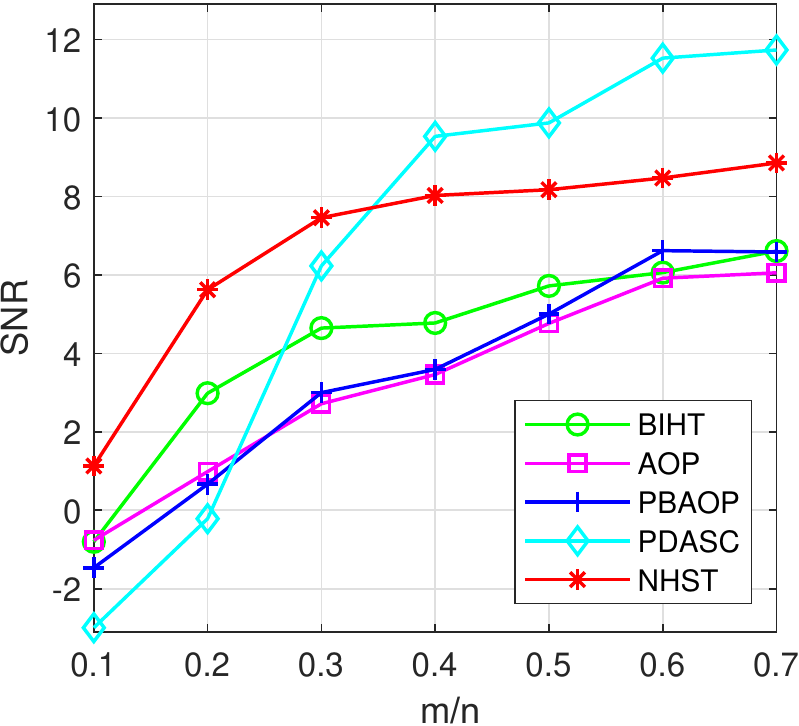}
\end{subfigure}
\begin{subfigure}{.325\textwidth}
	\centering
	\includegraphics[width=1\linewidth]{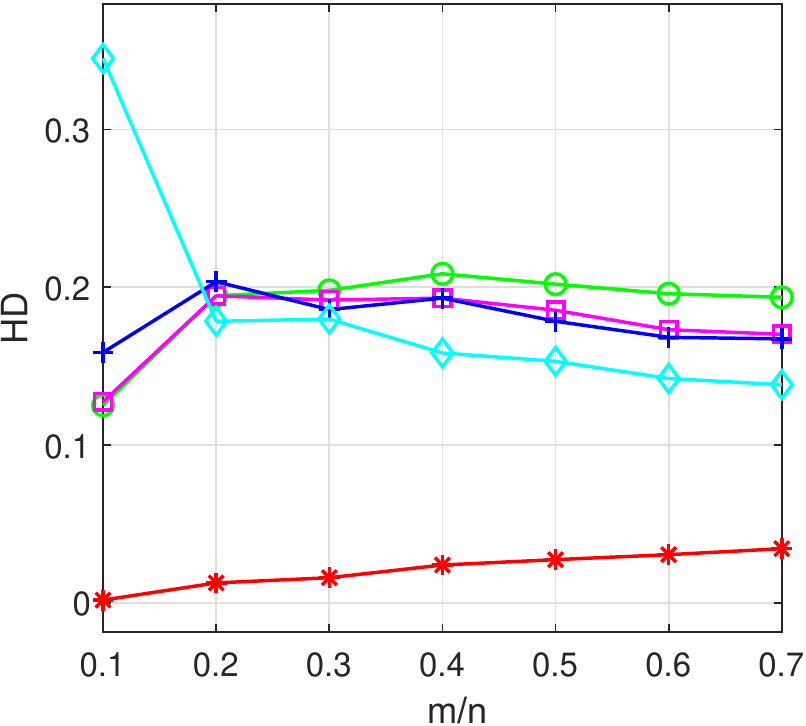}
\end{subfigure}
\begin{subfigure}{.325\textwidth}
	\centering
	\includegraphics[width=1\linewidth]{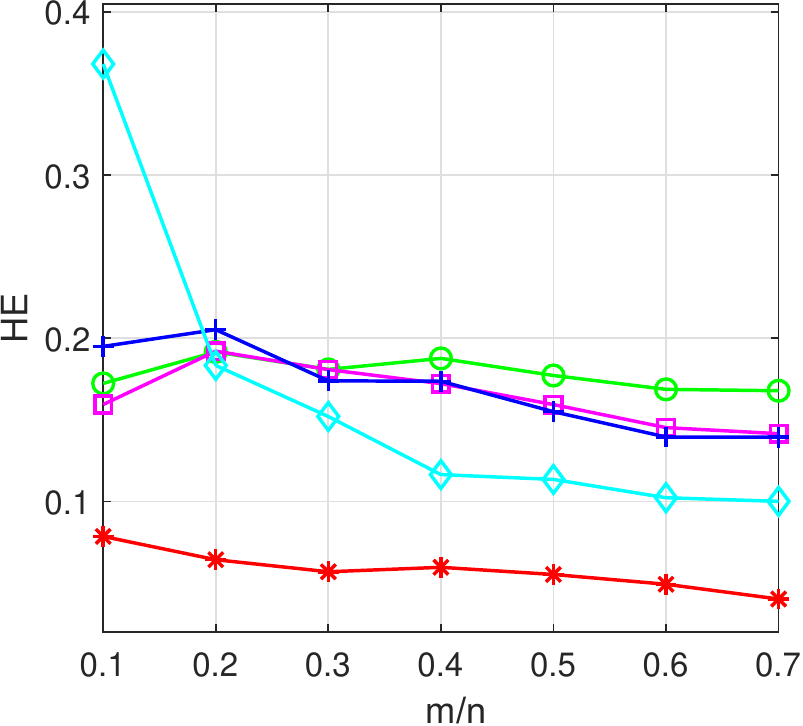}
\end{subfigure} 
\caption{Effect to $m/n$  for  \cref{ex:cs-cor}.}
\label{fig:ex2-cs-ind-r} 
\end{figure}

\item[iii)] \textit{Effect to $r$} from $\{0.02,0.02,\cdots,0.2\}$ with fixing $n=256, m=64$ and  $k_*=\lceil0.01n\rceil$.   Average results for \cref{ex:cs-cor} are reported  in \cref{fig:ex2-cs-ind-r}. As expected, {\tt NHST} behaves the best in terms of delivering the highest {\tt SNR}, the lowest {\tt HD} and {\tt HE} for each scenario. Again, we omitted the similar results for \cref{ex:cs-ind}.

 \begin{figure}[!th]  
\centering
\begin{subfigure}{.325\textwidth}
	\centering
	\includegraphics[width=.97\linewidth]{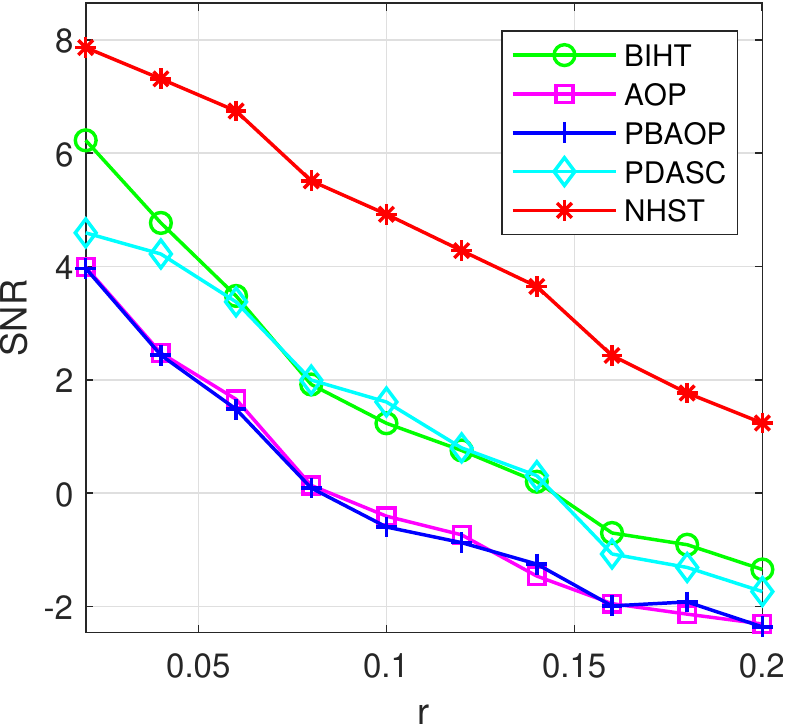}
\end{subfigure}
\begin{subfigure}{.325\textwidth}
	\centering
	\includegraphics[width=1\linewidth]{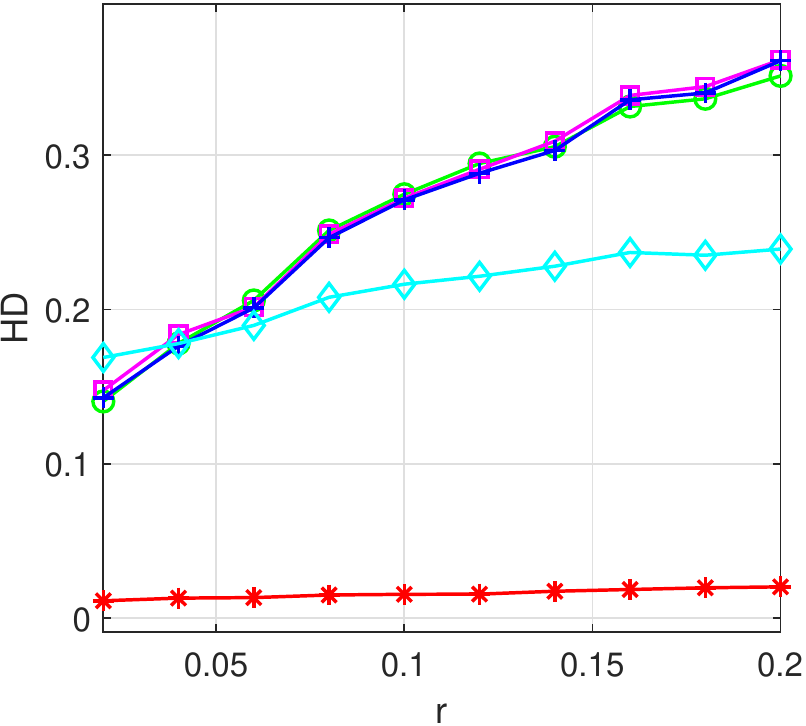}
\end{subfigure}
\begin{subfigure}{.325\textwidth}
	\centering
	\includegraphics[width=1\linewidth]{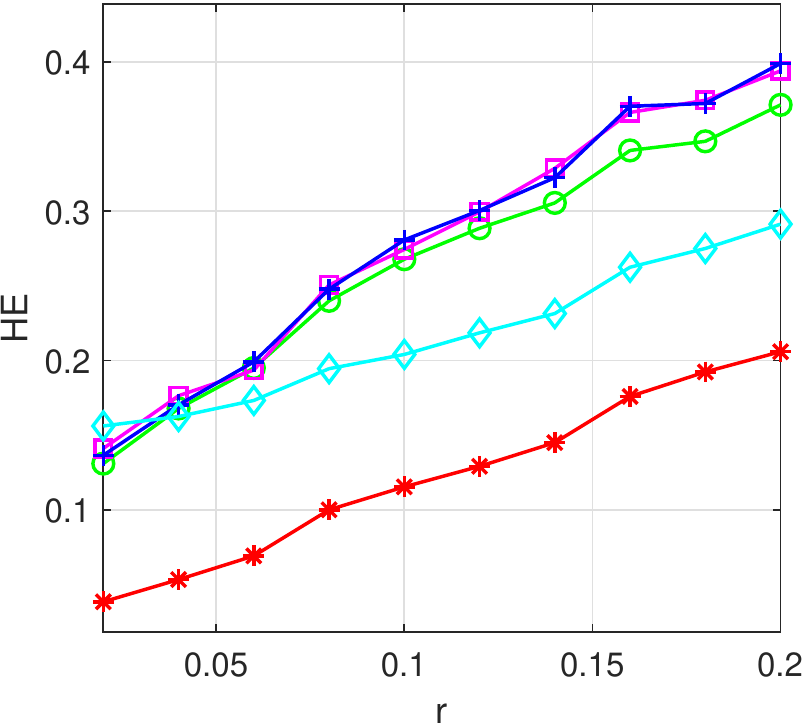}
\end{subfigure} 
\caption{Effect to $r$ for  \cref{ex:cs-cor}.}
\label{fig:ex2-cs-ind-r} 
\end{figure}
\item[iv)] \textit{Effect to $n$} from $\{1,2,3,4\}\times 5000$ with fixing $m=\lceil0.25n\rceil, k_*=\lceil0.01n\rceil$ and  $r=0.05$. We record the average results in  \cref{fig:ex2-cs-ind-n}. Obviously,   {\tt NHST} achieves the highest {\tt SNR}, lowest {\tt HD} and {\tt HE} but with consuming the shortest computational time. So it performs the best.
\end{itemize}

\begin{table}[!th]
	\renewcommand{\arraystretch}{0.85}\addtolength{\tabcolsep}{0.2pt}
	\caption{Effect to the bigger values of $n$.}\vspace{-4mm}
	\label{fig:ex2-cs-ind-n}
	\begin{center}
		\begin{tabular}{lccccccccccc}
			\hline
			&\multicolumn{5}{c}{ \cref{ex:cs-ind}}&&\multicolumn{5}{c}{ \cref{ex:cs-cor}}\\\cline{2-6}\cline{8-12}
$n$  	&	    {\tt BIHT} 	&	  {\tt AOP} 	&	   {\tt PBAOP}     	&	  {\tt PDASC} 	&	  {\tt NHST} 	&&	    {\tt BIHT} 	&	  {\tt AOP} 	&	   {\tt PBAOP}     	&	  {\tt PDASC} 	&	  {\tt NHST} 	\\\hline
&\multicolumn{5}{c}{{\tt SNR}}&&\multicolumn{5}{c}{{\tt SNR}}\\\cline{2-6}\cline{8-12}
5000	&	4.485 	&	1.883 	&	2.360 	&	0.787 	&	5.753 	&	&	4.985 	&	1.762 	&	2.193 	&	1.417 	&	5.420\\ 
10000	&	4.399 	&	1.830 	&	2.081 	&	-0.133 	&	5.478 	&	&	4.826 	&	1.574 	&	1.939 	&	0.004 	&	5.470 \\
15000	&	4.357 	&	1.861 	&	2.186 	&	-1.060 	&	5.455 	&	&	4.889 	&	1.692 	&	2.024 	&	-0.839 	&	5.402 \\
20000	&	4.425 	&	1.770 	&	2.242 	&	-1.011 	&	5.472 	&	&	5.008 	&	1.895 	&	2.203 	&	-1.456 	&	5.143 \\
\hline

&\multicolumn{5}{c}{{\tt HD}}&&\multicolumn{5}{c}{{\tt HD}}\\\cline{2-6}\cline{8-12}

5000	&	0.167 	&	0.161 	&	0.162 	&	0.321 	&	0.034 	&	&	0.168 	&	0.155 	&	0.160 	&	0.293 	&	0.036 \\
10000	&	0.170 	&	0.163 	&	0.171 	&	0.353 	&	0.034 	&	&	0.171 	&	0.164 	&	0.167 	&	0.342 	&	0.036 \\
15000	&	0.170 	&	0.166 	&	0.170 	&	0.392 	&	0.034 	&	&	0.163 	&	0.152 	&	0.158 	&	0.379 	&	0.036 \\
20000	&	0.169 	&	0.164 	&	0.172 	&	0.453 	&	0.034 	&	&	0.159 	&	0.148 	&	0.160 	&	0.409 	&	0.036 \\
\hline

&\multicolumn{5}{c}{{\tt HE}}&&\multicolumn{5}{c}{{\tt HE}}\\\cline{2-6}\cline{8-12}
5000	&	0.156 	&	0.158 	&	0.155 	&	0.305 	&	0.040 	&	&	0.153 	&	0.148 	&	0.153 	&	0.276 	&	0.041 \\
10000	&	0.158 	&	0.158 	&	0.163 	&	0.339 	&	0.041 	&	&	0.158 	&	0.157 	&	0.160 	&	0.327 	&	0.041 \\
15000	&	0.158 	&	0.161 	&	0.162 	&	0.381 	&	0.042 	&	&	0.149 	&	0.147 	&	0.152 	&	0.367 	&	0.041 \\
20000	&	0.157 	&	0.160 	&	0.163 	&	0.445 	&	0.041 	&	&	0.145 	&	0.141 	&	0.149 	&	0.401 	&	0.042 \\
\hline

&\multicolumn{5}{c}{{\tt Time}(in seconds)}&&\multicolumn{5}{c}{{\tt Time}(in seconds)}\\\cline{2-6}\cline{8-12}
5000	&	0.676 	&	1.220 	&	0.576 	&	0.181 	&	0.107 	&	&	0.688 	&	1.247 	&	0.528 	&	0.182 	&	0.122 \\
10000	&	6.759 	&	5.903 	&	3.222 	&	0.806 	&	0.456 	&	&	6.335 	&	6.202 	&	2.811 	&	0.778 	&	0.489 \\
15000	&	15.37 	&	14.64 	&	7.296 	&	1.843 	&	1.080 	&	&	14.85 	&	13.03 	&	6.982 	&	1.868 	&	1.159 \\
20000	&	29.14 	&	26.08 	&	14.09 	&	3.597 	&	2.344 	&	&	28.80 	&	23.29 	&	14.06 	&	3.412 	&	2.236 \\
			\hline
		\end{tabular}
	\end{center}  
\end{table}
\section{Conclusion}
The Heaviside step function ideally characterizes the binary status of some real-world data. However, the hardness stemmed from the dis-continuity restricts its applications for a time. Fortunately, this paper manages to address the optimization \eqref{ell-0-1} with the Heaviside set constraint. One of the key factors of such a success is based on the closed form of the normal cone to the feasible set, namely, the Heaviside step set \eqref{l01-s}. Another key factor is the establishment of the $\tau$-stationary point, which enables us to benefit from the Newton type method. We feel that those results could be extended to a more general case  where $A\bfx-\bfb$ in the problem \eqref{ell-0-1} is replaced by some nonlinear functions $c(\bfx)$. A possible explanation can be given as follows: For the optimization problem $\min \{f(\bfx):c(\bfx)\leq0\}$,  if only a few (e.g., $s\ll m$) inequalities allow to be violated in the constraints, then $\|(c(\bfx))_+\|_0\leq s$.  Moreover, it is also worth applying the Heaviside set constrained optimization into dealing with some other  relevant problems, such as  the maximum rank correlation   estimation for the linear transformation regression in statistics\cite{han1987non,sherman1993limiting, lin2013smoothed} and the area under the receiver operating characteristic curve in medicine  \cite{ma2005regularized, pepe2006combining, ma2007combining, zhao2011auc,ghanbari2019novel}.

\section*{Acknowledgements}
 This work was funded by the the National Science Foundation of China (11971052, 11801325) and Young Innovation Teams of Shandong Province (2019KJI013).

%
%

\bibliographystyle{abbrv}
\bibliography{references}

\section{Appendix}
 
Similar to the definitions in \eqref{constants-y-H}, we denote
 \beq\label{zab}
\bfw :=(\bfx;\bfl),~~\bfy:=A\bfx-\bfb,~~~ \bfz: =  \bfy+\tau \bfl.
\eeq
To prove  \cref{the:quadratic}, we first prove the following lemma. 
  \begin{lemma}\label{lemma-neighbour} Let $\bfw^*$ be a \ts\ with $0<\tau<\tau_*$ of the problem (\ref{ell-0-1}) and  $c_*, C_*, \tau_*$ be  given by (\ref{constants}). The following results hold.  
\begin{itemize}
\item[a)] There is a $\delta_*>0$ such that for any $\bfw\in U(\bfw^*,\delta_*)$ and any $T\in\T_\tau(\bfz;s)$, 
 \beq\label{gw*-0} F(\bfw^*;T)=0. \eeq
 \item[b)] Suppose Assumptions \ref{ass-1} and \ref{ass-2} hold. There   always exists a $\delta_*>0$ such that for any $\bfw\in U(\bfw^*,\min\{ {\delta_*},{1}/({c_* L_*})\})$ and any $T\in\T_\tau(\bfz;s)$, 
 \beq\label{bd-h-0}
0<1/c_* \leq \sigma_{\min}(\nabla F(\bfw;T))~\leq \|\nabla F(\bfw;T)\| ~\leq   C_*. 
\eeq
\end{itemize} 
\end{lemma}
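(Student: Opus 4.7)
My plan for part (a) is to translate closeness of $\bfw$ to $\bfw^*$ into a rigid combinatorial description of every $T\in\T_\tau(\bfz;s)$. First I would record the sign pattern of $\bfz^*=\bfy^*+\tau\bfl^*$: on $\overline J_*$ one has $\bfl^*_i=0$ so $\bfz^*_i=\bfy^*_i\neq 0$; on $J_*$ one has $\bfy^*_i=0$ so $\bfz^*_i=\tau\lambda^*_i\ge 0$; and the hypothesis $\tau<\tau_*$ yields the strict gap $\tau\max_i\lambda^*_i<y^*_{[s]}$ (the case $\bfl^*=0$ is immediate because then $\T_\tau(\bfz^*;s)=\{J_*\}$ trivially). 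Setting $\alpha:=\min_{i\in\overline J_*}|y^*_i|$, $\beta:=\min\{\tau\lambda^*_i:\lambda^*_i>0\}$, and $\gamma:=y^*_{[s]}-\tau\max_i\lambda^*_i$, I would choose $\delta_*>0$ small enough (using $\|\bfz-\bfz^*\|_\infty\le(\|A\|+\tau)\|\bfw-\bfw^*\|$) so that on $U(\bfw^*,\delta_*)$: (i) $\mathrm{sgn}(\bfz_i)=\mathrm{sgn}(\bfy^*_i)$ for $i\in\overline J_*$; (ii) $\bfz_i>0$ whenever $i\in J_*$ with $\lambda^*_i>0$; and (iii) $\min\{\bfz_i:i\in\overline J_*,\,y^*_i>0\}>\max\{\bfz_i:i\in J_*\}$.

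From (i) and (iii) it follows that the $s$ largest positive entries of $\bfz$ are exactly $\Gamma_s=\{i\in\overline J_*:y^*_i>0\}$, so that the definition \eqref{T-z} forces $T=(\Gamma_+\setminus\Gamma_s)\cup\Gamma_0\subseteq J_*$ for every $T\in\T_\tau(\bfz;s)$. From (ii), any index $i\in J_*$ with $\bfz_i<0$ must satisfy $\lambda^*_i=0$; combined with $\bfl^*_{\overline J_*}=0$ this delivers $\bfl^*_{\overline T}=0$. Because $T\subseteq J_*$ and $\bfy^*_{J_*}=0$, one has $A_T\bfx^*-\bfb_T=0$; and using $\nabla f(\bfx^*)+A^\top\bfl^*=0$ from \eqref{eta-point} together with $\bfl^*_{\overline T}=0$ yields $\nabla f(\bfx^*)+A_T^\top\bfl^*_T=0$. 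Assembling these three identities is exactly \eqref{gw*-0}.

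For part (b), I would exploit the block structure
\[
\nabla F(\bfw;T)=\begin{pmatrix} H(\bfx,T) & 0\\ 0 & I\end{pmatrix},\qquad H(\bfx,T):=\begin{pmatrix}\nabla^2 f(\bfx) & A_T^\top\\ A_T & 0\end{pmatrix},
\]
which gives $\|\nabla F(\bfw;T)\|=\max\{\|H(\bfx,T)\|,1\}$ and $\sigma_{\min}(\nabla F(\bfw;T))=\min\{\sigma_{\min}(H(\bfx,T)),1\}$. By part (a), on the claimed neighborhood $T\subseteq J_*$, so $H(\bfx^*,T)=H(T)$ is a principal submatrix of $H(J_*)$; hence $\|H(T)\|\le\|H(J_*)\|$. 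Moreover Assumption \ref{ass-1} makes $A_T$ full row rank (being a row-submatrix of $A_{J_*}$) and $\nabla^2 f(\bfx^*)\succ 0$, so $H(T)$ is non-singular for each of the finitely many $T\subseteq J_*$; the uniform bound $\min_{T\subseteq J_*}\sigma_{\min}(H(T))\ge 2/c_*$ holds by definition of $c_*$. Assumption \ref{ass-2} then yields $\|H(\bfx,T)-H(T)\|\le L_*\|\bfx-\bfx^*\|\le 1/c_*$ on the ball of radius $1/(c_*L_*)$; a triangle-inequality / Weyl argument gives $\|H(\bfx,T)\|\le\|H(J_*)\|+1/c_*\le C_*$ and $\sigma_{\min}(H(\bfx,T))\ge 2/c_*-1/c_*=1/c_*$. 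Since $c_*\ge 2$, one also has $1/c_*\le 1$, so the minimum with $I$'s singular value $1$ is attained at $1/c_*$, producing \eqref{bd-h-0}.

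The main obstacle is part (a): everything hinges on choosing $\delta_*$ so that the discrete selection rule defining $\T_\tau(\bfz;s)$ is ``locked'' to the structure at $\bfw^*$. In particular, the gap condition (iii) is what the strict inequality $\tau<\tau_*$ is designed to purchase, and one must keep track of the possibly empty subcases ($\bfl^*=0$, or no positive $\lambda^*_i$, or $\overline J_*=\emptyset$) to ensure the separation constants $\alpha,\beta,\gamma$ are well defined or appropriately replaced by $+\infty$. Once the combinatorial identification $T\subseteq J_*$ and $\bfl^*_{J_*\setminus T}=0$ is in place, the algebraic verification that $F(\bfw^*;T)=0$ and the perturbation bounds of part (b) follow routinely.
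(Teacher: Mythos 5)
Your proposal is correct and follows essentially the same route as the paper: both arguments choose $\delta_*$ so that the sign pattern of $\bfz$ near $\bfz^*$ forces $T\subseteq J_*$ (the paper phrases this via the inclusions $\Gamma_\pm^*\subseteq\Gamma_\pm$, $\Gamma_s^*\subseteq\Gamma_s$ and the set $\Delta=T_*\setminus T\subseteq\Gamma_0^*$, you via explicit separation constants), then verify the three stationarity identities, and part (b) is the identical submatrix-plus-Weyl perturbation argument. The only nit is that your identification $\Gamma_s=\{i\in\overline J_*:y_i^*>0\}$ is literally exact only when $\|\bfy^*_+\|_0=s$; in the subcase $\|\bfy^*_+\|_0<s$ one has $\bfl^*=0$ so the conclusion still holds, as you anticipate in your closing remarks.
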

\begin{proof} a) Recall $J_*$ in \eqref{gamma*} and  $\bfy^*=A\bfx^*-\bfb$ by \eqref{constants-y-H} that $$J_*=\{i\in\N_m:(A\bfx^*-\bfb)_i = 0\}=\{i\in\N_m:~y^*_i = 0\}.$$  This and $\bfw^*$ being a \ts\ with  $0<\tau<\tau_*$  satisfying \eqref{eta-point-1} lead to
\begin{eqnarray}\label{lambda-i-j}
\lambda_i^*=0,~y_i^*\neq0,~i\in\overline J_*,~~~~
  0\leq \lambda_i^*\leq y^*_{[s]}/\tau,~y_i^*=0,~i\in J_*. 
\end{eqnarray}
Based on which, decompose $\bfz^*$ as 
\begin{eqnarray}\label{z-decom}
 \bfz^*=\bfy^*+\tau\bfl^*=\left[  
     \begin{array}{ccc}
    \bfy^*_{J_*}+\tau\bfl^*_{J_*}\\
   \bfy^*_{\overline J_*}+ \tau\bfl^*_{\overline{J}_*}
     \end{array}
     \right]=\left[  
     \begin{array}{r}
    \tau\bfl^*_{J_*}\\
    \bfy^*_{\overline{J}_*}
     \end{array}
     \right].
\end{eqnarray}
Note that $y^*_{[s]}>0$ and $\max_{i} \lambda_i^* >0$ if $\bfl\neq0$ by \eqref{lambda-i-j}, so $\tau_*$ in  \eqref{constants} is well defined, which together with
 $0<\tau<\tau_*$ indicates 
\begin{eqnarray}\label{y*sl}{y^*_{[s]}}-\tau\max_{i} \lambda_i^*>{y^*_{[s]}}-\tau_*  \max_{i} \lambda_i^* \overset{\eqref{constants}}{=} 0.\end{eqnarray}
For any $T_*\in\T_\tau(\bfz^*;s)$,  we have  $\N_m=\Gamma_+^*\cup\Gamma_-^*\cup\Gamma_0^*$ by \eqref{notation-z} and  
\begin{eqnarray}\label{T-G-G-G}
 T_*=(\Gamma_+^*\setminus\Gamma_s^*)\cup\Gamma_0^*,~~ \overline T_*=\Gamma_s^*\cup\Gamma_-^*,~~\Gamma_s^* \subseteq \Gamma_+^*.
\end{eqnarray}
From \eqref{z-decom} and \eqref{y*sl},  $\overline{J}_*$ contains all  indices of $s$ largest elements  and all indices of  negative elements of $\bfz^*$ if  $\|\bfy_+^*\|_0=s$, namely, 
\begin{eqnarray}\label{G*y*}\overline{J}_*= \overline T_*,~~|\Gamma^*_s|=\|\bfy^*_+\|_0,~~J_*=T_*.\end{eqnarray}
It is easy to check that \eqref{G*y*} also holds if  $\|\bfy_+^*\|_0<s$ (Under such a case $\Gamma^*_s=\Gamma^*_+$ and $T_*= \Gamma_0^*$). 
Similarly, for any $T\in\T_\tau(\bfz;s)$, we have $\N_m=\Gamma_+\cup\Gamma_-\cup\Gamma_0$ and
\begin{eqnarray}
\label{TGG-G-J} T = (\Gamma_+\setminus\Gamma_s) \cup \Gamma_0,~~ \overline{T}=\Gamma_s \cup \Gamma_-, ~~\Gamma_s \subseteq \Gamma_+.
 \end{eqnarray}  
One can easily show that there is a sufficiently small $\delta_*>0$ (relied on $\bfw^*$) such that for $\bfw\in U(\bfw^*,\delta_*)$ and any $T\in\T_\tau(\bfz;s)$,
\beq\label{JGG}
\Gamma_-^*\subseteq \Gamma_-,~\Gamma_+^*\subseteq \Gamma_+,~\Gamma_s^* \subseteq \Gamma_s
\eeq
due to $z_i^*<0, \forall i\in \Gamma_-^*$ and $z_i^*>0, \forall i\in \Gamma_+^*$.  We just show one of them. If $\Gamma_-^*\subseteq \Gamma_-$ is not true, then there is a $i\in \Gamma_-^*$ but $i\notin \Gamma_-$. This means $z_i\geq0$ and $z_i^*<0$, which deliver $\|\bfz-\bfz^*\|\geq|z_i-z_i^*|\geq|z_i^*|$. On the other hand,  by \eqref{constants-y-H} and \eqref{zab} one can derive $\|\bfz-\bfz^*\| \leq \max\{\|A\|,\tau\} \|\bfw-\bfw^*\|$,  causing a contradiction due to $\delta_*$ being small enough. Therefore, those relations in \eqref{JGG} are true and  imply
\beq\label{JGG1}
\overline{T}_*=(\Gamma_s^*\cup \Gamma_-^*) \subseteq (\Gamma_s\cup \Gamma_-)=\overline{T}, ~~T \subseteq T_*.
\eeq
Moreover, let $\Delta:=(\overline T\setminus \overline T_*)= (  T_*\setminus  T)$. Recall that 
 $T_*=(\Gamma_+^*\setminus\Gamma_s^*)\cup\Gamma_0^*$ from \eqref{T-G-G-G}  and  $T = (\Gamma_+\setminus\Gamma_s) \cup \Gamma_0$ from \eqref{TGG-G-J}. If $|\Gamma_s^*|<s$, then by \eqref{T-z}, we have $\Gamma_s^*=\Gamma_+^*$. So $T_*=\Gamma_0^*$ and $\Delta\subseteq \Gamma_0^*$. If $|\Gamma_s^*|=s$, then $$s=|\Gamma_s^*|\overset{\eqref{JGG}}{\leq}|\Gamma_s|=\min\{s,|\Gamma_+|\}\overset{\eqref{T-z}}{\leq} s.$$ 
 So $\Gamma_s^* = \Gamma_s$ and $(\Gamma_+^*\setminus\Gamma_s^*)\subseteq(\Gamma_+\setminus\Gamma_s) $ by \eqref{JGG}, which also shows  
\beq\label{JGG2} \Delta=(  T_*\setminus  T)\subseteq \Gamma_0^*.\eeq
We are ready to prove that for any $\bfw\in U(\bfw^*,\delta_*)$ and any $T\in \T_{\tau}(\bfz;s)$,
\begin{eqnarray}\label{F-w-T-0}F(\bfw^*;T)= \left[  
     \begin{array}{c}
    \nabla f(\bfx^*)+ A^\top_{  T}\bfl^*_{  T}\\
     A_{ T}\bfx^*  -\bfb_{ T}\\ 
     \bfl^*_{\overline T}\\
     \end{array}
     \right]=0. \end{eqnarray}  
In fact,  it follows from  \cref{sta-eq} and $J_*=T_*$ by \eqref{G*y*} that the \ts\ stationary point $\bfw^*$ of (\ref{ell-0-1}) satisfies the conditions
 \allowdisplaybreaks    \begin{eqnarray}
 \nabla f(\bfx^*)+ A^\top_{ T_*}\bfl^*_{  T_*}&=&0,\nonumber\\ 
\label{sta-eq-1-1}     A_{  T_*}\bfx^*-\bfb_{  T_*}&=&0,   \\ 
     \bfl^*_{\overline T_*}=0,~~
     \bfz^*_{\Gamma_0^*}&=&0,\nonumber
  \end{eqnarray}
for any $T_*\in \T_{\tau}(\bfz^*;s)$, where the last one is from the definition of $\Gamma_0^*$ in \eqref{notation-z}. These conditions enable us to obtain three facts: 
\begin{itemize}
\item $A_{  T}\bfx^*-\bfb_{  T}=0$  due to $T\subseteq T_*$ by \eqref{JGG1}. 
\item  As $\Delta\subseteq \Gamma_0^* \subseteq T_*$ by \eqref{JGG2}, we have $ \bfz^*_{\Delta}=0, (A\bfx^*-\bfb)_{ \Delta}=0$ and thus $\tau\bfl^*_{\Delta}= \bfz^*_{\Delta}- (A\bfx^*-\bfb)_{ \Delta}=0$. This suffices to $\bfl^*_{\overline T}=\bfl^*_{\overline T_* \cup (\overline T\setminus\overline T_*)}=\bfl^*_{\overline T_* \cup \Delta}=0$.
\item By $\bfl^*_{\overline T}=0$ and $\bfl^*_{\overline T_*}=0$,  
\begin{eqnarray*}
\nabla f(\bfx^*)+ A_{  T}^\top\bfl^*_{ T}=\nabla f(\bfx^*)+ A ^\top\bfl^*  = \nabla f(\bfx^*)+ A_{ T_*}^\top\bfl^*_{  T_*}=0.
\end{eqnarray*} 
\end{itemize}
Overall, the above three facts verify \eqref{F-w-T-0}, claiming the conclusion.  \\

b)  For any two matrices $D$ and $D'$, we have
  \begin{eqnarray}\label{D-D}
 \|D'-D\|
&\geq& \max_{i}|\sigma_i( D')-\sigma_i(D)|\nonumber\\
&\geq& |\sigma_{i_0}(D')-\sigma_{i_0}(D)|\nonumber\\
&\geq& \sigma_{i_0}(D')-\sigma_{\min}(D)\nonumber\\
&\geq& \sigma_{\min}(D')-\sigma_{\min}(D), 
\end{eqnarray}
where the first inequality holds from \cite[Reminder (2), on Page 76]{lutkepohl1996handbook} and  
$i_0$ satisfies $\sigma_{i_0}(D)=\sigma_{\min}(D)$. Recall $H(J)$ in \eqref{constants-y-H} that
 \beq\label{FH*-FG}  H(J) \overset{\eqref{constants-y-H} }{=}\left[  
     \begin{array}{ccc}
    \nabla^2 f(\bfx^*)& A^\top_{J}\\
     A_{J}&0
     \end{array}
     \right], ~~~~ \nabla F(\bfw^*;T) =\left[  
     \begin{array}{cc}
    H( T)&0\\
     0&I  
     \end{array}
     \right].~~~~~\eeq    
It follows from the full row rankness of $A_{J_*}$ in Assumption  \ref{ass-1} that  $A_{J}$ is full row rank for any $J\subseteq J_*$. Again by Assumption  \ref{ass-1}, we have $H(J)$ is non-singular for any $J\subseteq J_*$, namely $\sigma_{\min} (H(J))>0$. As a result,
 \beq\label{1-c-*}1/c_*\overset{\eqref{constants}}{=}\min\{ \min_{J\subseteq J_*} \sigma_{\min} (H(J)),1\}/2>0. \eeq
From a), there is  a $\delta_*>0$ such that for any $\bfw\in U(\bfw^*,\delta_*)$ and any $T\in\T_\tau(\bfz;s)$
 \beq\label{JJoG0-0}
T\overset{\eqref{JGG1}}{\subseteq}T_*\overset{\eqref{G*y*}}{=}J_*,
 \eeq
which means $H( T)$ is a submatrix of $H(J_*)$. Then by \cite[Theorem 1]{thompson1972principal} that the maximum  singular value of a  matrix is no less than the maximum singular value of its sub-matrix, we obtain
  \beq\label{JJoG0-H}
\|H( T)\| \leq \|H(J_*)\|,~~ \sigma_{\min}(H( T)) \overset{\eqref{JJoG0-0}}{\geq} \min_{J\subseteq J_*} \sigma_{\min} (H(J)).
  \eeq
For any $\bfw\in U(\bfw^*,\min\{ {\delta_*},{1}/({c_* L_*})\})$, the locally Lipschitz continuity of $\nabla^2 f$  around $\bfx^*$ with $L_*>0$ leads to
 \beq\label{HH*2}
 \|\nabla F(\bfw;T)- \nabla F(\bfw^*;T)\|
 &\overset{\eqref{sta-eq-2}}{=}& \|\nabla^2 f(\bfx)- \nabla^2 f(\bfx^*)\|\nonumber\\
 & \overset{\eqref{Hessian-Lip}}{\leq}& L_* \|   \bfx-\bfx ^*\|\leq L_* \|   \bfw-\bfw^*\| < 1/c_*, ~~\eeq
which contributes to
 \begin{eqnarray*}
\|\nabla F(\bfw;T)\|&\leq& \|\nabla F(\bfw^*;T) \| + \|\nabla F(\bfw;T)- \nabla F(\bfw^*;T)\| \nonumber\\
&\overset{\eqref{HH*2}}{\leq}&\|\nabla F(\bfw^*;T) \| + 1/c_*\nonumber\\
& \overset{\eqref{FH*-FG}}{=}& \max\{ \|H( T)\|,1\} + 1/c_*\nonumber\\
  &\overset{\eqref{JJoG0-H}}{\leq}& \max\{ \|H(J_*)\|,1\} + 1/c_* \nonumber\\  
&\leq&   2\max\{ \|H(J_*) \|,1\}     \overset{\eqref{constants}}{=} C_*. 
 \end{eqnarray*}
Next we show  the smallest singular value of $\nabla F(\bfw;T)$ has a lower bound. In fact, 
 \beq \sigma_{\min}(\nabla F(\bfw;T))
  & \overset{\eqref{D-D}}{\geq}&\sigma_{\min}(\nabla F(\bfw^*;T))  -\|\nabla F(\bfw^*;T)-\nabla F(\bfw;T)\|  \nonumber\\
  &\overset{\eqref{HH*2}}{\geq}&\sigma_{\min}(\nabla F(\bfw^*;T))-1/c_*\nonumber\\
 &\overset{\eqref{FH*-FG}}{=}& \min\{ \sigma_{\min}(H(  T)),1 \}  -1/c_* \nonumber\\
 & \overset{\eqref{JJoG0-H}}{\geq}&\min\{\min_{J\subseteq J_*} \sigma_{\min} (H(J)),1\}   -1/c_* \nonumber\\
 &\overset{\eqref{1-c-*}}{=}&  2/c_*-1/c_* =  1/c_*. \nonumber \eeq 
Hence, the whole proof is completed. 
\qed 
\end{proof}

\subsection*{The proof of  \cref{the:quadratic}}

\begin{proof} a) Let $\epsilon_*:= \min\{ {\delta_*},{1}/({c_* L_*})\}$. Then
\beq\label{radius} \epsilon_*c_* L_* \leq 1.
\eeq
It follows from \cref{lemma-neighbour} and $\bfw^0\in U(\bfw^*,\epsilon_*)$ that
\beq\label{gw*-bd-h}F (\bfw^{*};T_0) \overset{\eqref{gw*-0}}{=}0, ~~~\|(\nabla F (\bfw^{0};T_0))^{-1}\|\overset{\eqref{bd-h-0}}{\leq} c_* 
\eeq
for the $T_0\in\T_\tau(\bfz^0;s)$. From \eqref{newton-dir-0}, we have
\beq\label{dkt}
\nabla F (\bfw^{0};T_0)~\bfd^{0}=- F (\bfw^{0};T_0).
\eeq
 \cref{lemma-neighbour} c) states that $\nabla F (\bfw^{0};T_0)$ is non-singular and thus $\bfd^{0}$ is well defined.
Let   $\bfw^{0}_\beta= \bfw^{*} + \beta(\bfw^{0}-\bfw^{*})$ where $\beta\in[0,1]$. One can easily check that $\bfw^{0}_\beta\in U(\bfw^*,\epsilon_*)$ as $\|\bfw^{0}_\beta-\bfw^{*}\|= \beta\|\bfw^{0}-\bfw^{*}\|\leq  \epsilon_*.$ For notational simplicity, let $\nabla F^0:=\nabla F (\bfw^{0};T_0)$ and $\nabla F^0_\beta:=\nabla F (\bfw^{0}_\beta;T_0)$.The locally Lipschitz continuity of $\nabla^2 f(\cdot)$  around $\bfx^*$ with $L_*>0$ brings out
\beq\label{lipschitz}
\|\nabla F^0-\nabla F^0_\beta \|&\overset{\eqref{sta-eq-2}}{ =}&
\|\nabla ^2 f (\bfx^{0})- \nabla ^2 f (\bfx^{0}_\beta) \|\nonumber\\
& \overset{\eqref{Hessian-Lip}}{\leq}& L_* \|\bfx^{0} -\bfx^{0}_\beta\| \leq L_*  \|\bfw^{0} -\bfw^{0}_\beta\| \nonumber\\
&=&L_* (1-\beta)\|\bfw^{0} -\bfw^{*}\|.
\eeq
 Note that for the fixed $T_0$, the function $F(\cdot;T_0)$ is differentiable. So we have the following mean value expression
\beq
F (\bfw^{0};T_0)&=&   F (\bfw^{*};T_0)+\int_0^1  \nabla F^0_{\beta}\cdot(\bfw^{0}-\bfw^{*})d\beta\nonumber\\
\label{mean-value}&\overset{\eqref{gw*-bd-h}}{=}&\int_0^1  \nabla F^0_{\beta}\cdot(\bfw^{0}-\bfw^{*})d\beta. \eeq
Then the following chain of inequalities hold.
{\allowdisplaybreaks \beq\label{quadratic-chain}
\|\bfw^{1}-\bfw^{*}\| &= &\|\bfw^{0}+\bfd^{0}-\bfw^{*}\|\nonumber\\
  &\overset{\eqref{dkt}}{=} &\|\bfw^{0}-\bfw^{*}-(\nabla F^0)^{-1}  \cdot F (\bfw^{0};T_0) \|\nonumber\\
&\overset{\eqref{gw*-bd-h}}{\leq} &c_*\|\nabla F^0\cdot(\bfw^{0}-\bfw^{*})-  F (\bfw^{0};T_0) \|\nonumber\\
&\overset{\eqref{mean-value}}{=}&c_*\|\nabla F^0\cdot(\bfw^{0}-\bfw^{*})-  \int_0^1 \nabla  F^0_\beta\cdot(\bfw^{0}-\bfw^{*})d\beta \|\nonumber\\
& \leq &c_* \int_0^1  \|\nabla F^0-  \nabla F^0_\beta \|\|\bfw^{0}-\bfw^{*} \|d\beta \nonumber\\
&\overset{\eqref{lipschitz}}{\leq}&c_*L_* \int_0^1  (1-\beta) \|\bfw^{0} -\bfw^{*}\|\|\bfw^{0}-\bfw^{*} \|d\beta \nonumber\\
& = &0.5 c_* L_* \|\bfw^{0}-\bfw^{*} \|^2.
\eeq} 
The above relation suffices to
\beq 
\|\bfw^{1}-\bfw^{*}\|  \leq 
 0.5 c_* L_* \epsilon_* \|\bfw^{0}-\bfw^{*} \| \overset{\eqref{radius}}{\leq} 0.5\|\bfw^{0}-\bfw^{*} \| < \epsilon_*.
 \eeq
This means $\bfw^1\in U(\bfw^*,\epsilon_*)$. Replacing $T_0 $ by $T_1$, the same reasoning allows us to show that $\bfd^{1}$ is well defined and $\|\bfw^{2}-\bfw^{*}\|  \leq 0.5 c_* L_*  \|\bfw^{1}-\bfw^{*} \|^2 $. By the induction, we can conclude that $\bfw^k\in U(\bfw^*,\epsilon_*)$, $\bfd^{k}$ is well defined and 
\beq\label{quadratic-rate}
\|\bfw^{k+1}-\bfw^{*}\|  &\leq& 0.5 c_* L_*  \|\bfw^{k}-\bfw^{*} \|^2, \\
\label{quadratic-rate-1}&\leq& 0.5 c_* L_* \epsilon_*  \|\bfw^{k}-\bfw^{*} \| 
\overset{\eqref{radius}}{\leq} 0.5 \|\bfw^{k}-\bfw^{*} \|. 
\eeq
Therefore, \eqref{quadratic-rate} claims b). The conclusion of a) can be made by \eqref{quadratic-rate-1}  that $\bfw^{k}\rightarrow\bfw^{*}$ and $\bfd^k=\bfw^{k+1}-\bfw^{k}=\bfw^{k+1}-\bfw^{*}+\bfw^{*}-\bfw^{k}\rightarrow0.$
 \qed
\end{proof} 

\end{document}